\numberwithin{equation}{section}
\newcommand{\N}{\mathbb N}
\newcommand{\R}{\mathbb R}
\def\E{\mathbb E}
\def\P{\mathbb P}
\newcommand{\Pw}{\mathcal P_2(\R^d)}
\newcommand{\Pk}{\mathcal P_1(\R^d)}
\newcommand{\linf}{L^{\infty}}
\newcommand{\sF}{\mathcal{F}}
\newcommand{\bP}{\mathbb{P}}
\newcommand{\sL}{\mathcal{L}}
\newcommand{\sG}{\mathcal{G}}
\newcommand{\sA}{\mathcal{A}}
\newcommand{\sP}{\mathcal{P}}
\newcommand{\bx}{\mathbf{x}}
\def\XXint#1#2#3{{\setbox0=\hbox{$#1{#2#3}{\int}$}
\vcenter{\hbox{$#2#3$}}\kern-.5\wd0}}
\numberwithin{equation}{section}
\newtheorem{thm}{Theorem}[section]
\newtheorem{lem}[thm]{Lemma}
\newtheorem{prop}[thm]{Proposition}
\theoremstyle{definition}
\newtheorem{rmk}[thm]{Remark}
\def\smallnegint{\mathop{\int\mkern-13mu
        \raise.5ex\hbox{${\scriptscriptstyle\diagup}$}}\nolimits}
\def\ds{\displaystyle}
\def\ep{\varepsilon}
\def\F{{\mathcal F}}
\def\bx{{\bf x}}
\def\by{{\bf y}}
\def\ssetminus{\,\raise.4ex\hbox{$\scriptstyle\setminus$}\,}
\newcommand{\be}{\begin{equation}}
\newcommand{\ee}{\end{equation}}
\newcommand{\bc}{\begin{case}}
\newcommand{\ec}{\end{cases}}
\newcommand{\bs}{\begin{split}}
\newcommand{\es}{\end{split}}
\newcommand{\norm}[1]{\left\Vert#1\right\Vert}
\newcommand{\abs}[1]{\left\vert#1\right\vert}
\newcommand{\vs}{\vskip.075in}
\renewcommand{\bar}{\overline}
\renewcommand{\tilde}{\widetilde}
\renewcommand{\hat}{\widehat}
\begin{document}
\title[Convergence rate for the optimal control of McKean-Vlasov dynamics]{An algebraic convergence rate for the optimal control of McKean-Vlasov dynamics}

%\author{Pierre Cardaliaguet$^{1}$ and Panagiotis E Souganidis$^{2}$}
\author[Pierre Cardaliaguet, Samuel Daudin, Joe Jackson and Panagiotis E. Souganidis]
{Pierre Cardaliaguet, Samuel Daudin, Joe Jackson and Panagiotis E. Souganidis}
\address{Universit\'e Paris-Dauphine, PSL Research University, Ceremade, 
Place du Mar\'echal de Lattre de Tassigny, 75775 Paris cedex 16 - France}
\email{cardaliaguet@ceremade.dauphine.fr }
\address{Universit\'e Paris-Dauphine, PSL Research University, Ceremade, 
Place du Mar\'echal de Lattre de Tassigny, 75775 Paris cedex 16 - France}
\email{daudin@ceremade.dauphine.fr }
\address{Department of Mathematics, The University of Texas at Austin, Austin, Texas 78712, USA}
\email{jjackso1@utexas.edu}
\address{Department of Mathematics, University of Chicago, Chicago, Illinois 60637, USA}
\email{souganidis@math.uchicago.edu}
\vskip-0.5in 
%\thanks{\hskip-0.149in The third author was supported by the National Science Foundation grant DGE-1610403. The fourth author was partially supported by the National Science Foundation grants DMS-1266383 and DMS-1600129, the Office for Naval Research grant N000141712095 and the Air Force Office for Scientific Research grant FA9550-18-1-0494.}

\dedicatory{Version: \today}

\begin{abstract}%The paper is concerned with the convergence problem for the optimal control of McKean-Vlasov dynamics, also known as mean field control. In particular, 
We establish an algebraic rate of convergence in the large number of particles limit of the value functions of $N$-particle stochastic control problems towards the value function of the corresponding McKean-Vlasov problem, also known as mean field control.  The rate is obtained  in the presence of both idiosyncratic and common noises and in a setting where the value function for the McKean-Vlasov problem need not be smooth. Our approach relies crucially on uniform in $N$ Lipschitz and semi-concavity estimates for the $N$-particle value functions as well as a certain concentration inequality. 
\end{abstract}

\maketitle

%\tableofcontents
\section{Introduction}

We consider an optimal control problem with a large number of particles. The value function for this optimization problem reads
\be \label{def.VN}
\mathcal V^N(t_0, \bx_0):= \inf_{\alpha \in \mathcal A^N} %{\alpha = (\alpha^k)_{k = 1}^N}
 \E\Big[ \int_{t_0}^T (\frac{1}{N}\sum_{k=1}^N L(X^k_t,\alpha^k_t) +\mathcal F(m^N_{{\bf X}_t}))dt +
\mathcal G(m^N_{{\bf X}_T})\Big ],
\ee
where $T>0$ is a finite horizon, $t_0\in [0,T]$ is the initial time, and $\bx_0=(x_0^1, \dots, x_0^N)\in (\R^d)^N$ is the initial position of the $N$ particles. The infimum is taken over the set $\mathcal A^N$ of progressively measurable $(\R^d)^N$-valued processes $\alpha = (\alpha^k)_{k = 1}^N$ in $L^2([0,T] \times \bar \Omega ; (\R^d)^N)$ and ${\bf X}= (X^1, \dots, X^N)$ satisfies, for each $k\in \{1, \dots, N\}$,
%\be
%X^k_t = x^k_0+\int_{t_0}^t \alpha^k_s ds +\sqrt{2}B^k_t + \sqrt{2a_0} B^0_t \qquad t\in [t_0,T].  % \; k\in \{1, \dots, N\}.
%\ee

\be\label{takis1}
X^k_t = x^k_0+\int_{t_0}^t \alpha^k_s ds +\sqrt{2}(B^k_t-B_{t_0}^k) + \sqrt{2a_0} (B^0_t-B_{t_0}^0) \qquad t\in [t_0,T].
\ee

The $(B^k)_{k \geq 0}$ are independent $d$-dimensional Brownian motions defined on the fixed filtered probability space $(\bar{\Omega}, \bar{\sF}, \bar{\mathbb{F}}, \bar{\bP})$ satisfying the usual conditions, and $L^2([0,T] \times \bar \Omega ; (\R^d)^N)$ denotes the set of square-integrable and progressively measurable processes taking values in $(\R^d)^N$,
%dX^k_t = x^k_0+\int_{t_0}^t b(X^k_s,\alpha^k_s, m^N_{{\bf X}_s}) ds +\sqrt{2}B^k_t + \sqrt{2a_0} dB^0_t \qquad t\in [t_0,T], \; k\in \{1, \dots, N\}.
and $m^N_{{\bf X}_t}$ is the empirical measure of ${\bf X}_t$.
%Here $\delta_x$ is the Dirac mass at $x$, and the empirical measure  $m^N_{{\bf X}_t}$ is given by
%\be\label{m}
%m^N_{{\bf X}_t}:= \dfrac{1}{N}\sum_{k=1}^N \delta_{X^k_t}.
%\ee
%where $\delta_x$ is the Dirac mass at $x$).
The cost function $L:\R^d\times \R^d\to \R$ is supposed to be convex in the second variable and smooth while the maps $\mathcal F, \mathcal G:\Pk\to \R$ are assumed to be smooth and bounded over the space $\Pk$ of Borel measures on $\R^d$ with a finite first-moment (precise assumptions will be given in section \ref{sec.mainresult}).
%Finally $b:\R^d\times \R^d\times \Pk\to \R^d$ is a smooth and bounded drift.
The constant $a_0\geq 0$ is the level of the common noise, and the $(B^k)_{k\geq 1}$ are viewed as independent or idiosyncratic noises.

\subsection{Our results} To describe our result we need to introduce the map $\mathcal U :[0,T] \times \sP_2(\R^d) \to \R$, where  $\sP_2(\R^d)$ is the space of Borel measures on $\R^d$ with a finite second-moment, given, for $(t_0, m_0)\in [0,T]\times \sP_2(\R^d)$, by
\be\label{def.U}
\mathcal U(t_0,m_0):=
\inf_{\alpha \in \mathcal A} \E[\int_{t_0}^T\big( L(X_t, \alpha_t(X_t)) + \sF( \sL(X_t | \sF^{B^0}_t))\big) + \sG(\sL(X_T | \sF^{B^0}_T))],
%\inf_{(m, \alpha)} \E\left[ \int_{t_0}^T (\int_{\R^d} L(x, \alpha_s(x))m_s(dx)+\mathcal F(m_s))ds +\mathcal G(m_T)\right],
\ee
where  the infimum is taken over an appropriate set $\mathcal A$ of admissible controls (this will be made precise later), $\mathbb{F}^{B^0} = (\sF_{t}^{B^0})_{0 \leq t \leq T}$ denotes the filtration generated by $B^0$, $ \sL(X_t | \sF^{B^0}_t)$ is the law of $X_t $ conditioned upon  $\sF_{t}^{B^0}$,  and
\be
X_t= \bar X_{t_0}+\int_{t_0}^t \alpha_s(X_s) ds +\sqrt{2}(B_t - B_{t_0}) + \sqrt{2a_0} (B^0_t - B^0_{t_0}),
%X_t= \bar X_0+\int_{t_0}^t b(X_s,\alpha_s, \mathcal L(X_s| B^0) ) ds +\sqrt{2}B_t + \sqrt{2a_0} B^0_t,
\ee
with $B$ another Brownian motion, $\bar X_{t_0}$ a random initial condition with law $m_0$ and  $B^0$, $B$ and  $\bar X_{t_0}$ mutually independent.
%We are interested in the behavior of $\mathcal V^N$ as $N$ tends to infinity.
\vs
%Although it is known (more about this later in the introduction) that, as   $N$ tends to infinity,  $\mathcal V^N$ converges to $\mathcal U$, the existing convergence results come without any rate.

Although it is known that, as   $N$ tends to infinity,  $\mathcal V^N$ converges to $\mathcal U$, the existing convergence results come without any rate

\vs
Our main result is the following algebraic convergence rate: there exists $\beta\in (0,1]$, depending only on the  dimension $d$,  and $C>0$, depending on the data $(\mathcal{F},\mathcal{G},H)$,  such that, for any $(t,\bx)\in [0,T]\times (\R^d)^N$, % one has:
\be
\abs{\mathcal V^N(t, \bx) - \mathcal U(t, m_{\bx}^N)} \leq CN^{-\beta} (1 + M_2(m^N_{\bx})),
\ee
where $M_2(m^N_{\bx})= N^{-1}\sum_{i=1}^N|x^i|^2$ is the second-order moment of the measure $m^N_{\bx}$.
\vs
Although the exact value of $\beta$ could be traced back through the computation, it is clearly not optimal. In particular, it is very far from the one obtained for  a standard particle system. Similarly,  even if some dependence with respect to a moment of the measure $m^N_{\bx}$ is expected, the dependence given here is probably far from sharp.

\subsection{Background and related literature} The convergence of $\mathcal V^N$  to $\mathcal U$ was shown by Lacker  \cite{La17}  in a general framework and  for suitable initial data but without common noise, that is, with  $a_0=0$ in \eqref{takis1}.
Recently, the results of \cite{La17} were  extended in Djete, Possama\"i and Tan \cite{DPT20} to problems with a common noise and interaction through the controls.
Beside \cite{DPT20, La17} several other papers have studied the question of the mean field limit of optimal control problems, for example,  Cavagnari, Lisini, Orrieri and Savar\'e \cite{CaLiOrSa} and Fornasier, Lisini, Orrieri and Savar\'e  \cite{FLOS19} investigate the problem without noise using $\Gamma-$convergence techniques. The recent contribution of Gangbo, Mayorga and Swiech \cite{GMS} studies the mean field limit without idiosyncratic but  with common noise using partial differential equations (PDE for short) techniques. This is possible thanks to the fact that $\mathcal V^N$ solves the Hamilton-Jacobi (HJ for short) equation
\be\label{eq.HJBN}
\left\{\begin{array}{l}
\ds -\partial_t \mathcal V^N(t,{\bf x}) -\sum_{j=1}^N \Delta_{x^j}\mathcal V^N(t,{\bf x}) -a_0 \sum_{i,j=1}^N {\rm tr} (D^2_{ij}\mathcal V^N(t,\bx))\\
\ds \qquad  +\frac1N\sum_{j=1}^N H(x^j, N D_{x^j}\mathcal V^N(t, {\bf x}))=\mathcal F(m^N_{{\bf x}})\ \  {\rm in}\ \  (0,T)\times (\R^d)^N, \\%[1.5mm]
\ds  \mathcal V^N(T,{\bf x})= \mathcal G(m^N_{{\bf x}})\ \  {\rm in}\ \ (\R^d)^N,
\end{array}\right.
\ee
where
$
H(x,p)= \sup_{\alpha\in \R^d}[ - p\cdot \alpha-L(x,\alpha)],
%H(x,p,m)= \sup_{\alpha\in \R^d} - p\cdot b(x,\alpha,m)-L(x,\alpha,m),
$
while $\mathcal U$ is expected to solve (in some sense) the infinite dimensional HJ equation
\be\label{eq.HJcnintro}
\left\{\begin{array}{l}
\ds -\partial_t \mathcal U(t,m)-(1+a_0)\int_{\R^d}{\rm div}_y( D_m\mathcal U(t,m,y))m(dy) \\
\ds \hspace{2cm}  - a_0\int_{\R^{2d}} {\rm tr}(D^2_{mm}\mathcal U(t,x,m,y,y'))m(dy)m(dy')\\
\ds \hspace{1cm} +\int_{\R^d} H(y, D_m\mathcal U(t,m,y))m(dy)=\mathcal F(m)\ \  {\rm in}\ \ (0,T)\times \Pw,\\%[1.2mm]
\ds \mathcal U(T,m)= \mathcal G(m)\ \ {\rm in}\ \ \Pw.
\end{array}\right.
\ee
For the definition of the derivatives $D_m\mathcal U$ and $D^2_{mm}\mathcal U$ we refer to the books of Cardaliaguet, Delarue, Lasry and Lions \cite{CDLL} and Carmona and Delarue \cite{CaDeBook}.
%\\
%\vskip-.1in

One of the reasons for introducing the value functions is that they provide  optimal feedbacks for the optimization problems.  For the particle system, this optimal feedback is given (rigorously) by
$
\alpha^*_i(t,\bx)= -D_pH(x_i, ND_{x_i} \mathcal V^N(t, \bx)),
$
while for the limit system it takes the form (at least formally)
$
\alpha^*_t(x,m)= -D_pH(x, D_m\mathcal U(t,m,x)).
$
The difficulty in the PDE analysis of \cite{GMS} is that, in the absence of the idiosyncratic noise, the value function $\mathcal V^N$ is  not smooth in general, and, thus,  \eqref{eq.HJBN} has to be interpreted in the  viscosity sense.  A suitable notion of viscosity solution for the infinite dimensional HJ equation \eqref{eq.HJcnintro} without idiosyncratic noise is introduced in \cite{GMS} , and then is proven that $\mathcal V^N$ converges to this viscosity solution. In the presence of idiosyncratic noise the notion of viscosity solution to \eqref{eq.HJcnintro} is not understood yet and we will not try to use this approach. 

This being said, we believe that our technique of proof applies when the idiosyncratic noise is degenerate. Indeed, none of the estimate on $\mathcal{V}^N$ that we need relies on its $\mathcal{C}^2$ regularity. Although the non-degeneracy of the idiosyncratic noise is regularizing at the level of the N-particle system, it does not facilitate the analysis of the limit problem for which the value function is not necessarily smooth. We emphasise that the presence of the idiosyncratic noise is the main difficulty that we want to address in this paper. In particular it prevents any use of viscosity solutions techniques in Hilbert spaces as in \cite{GMS}.

%\textcolor{red}{One clever sentence to explain the ambiguous effect of the idiosyncratic noise in the convergence problem and why we don't focus on a possible degeneracy of this noise although the approach should work} 

\subsection{More about our results} While the existing results mentioned above demonstrate the convergence of $\mathcal{V}^N$ to $\mathcal{U}$ under many different technical hypotheses and using a variety of techniques, none provides a rate of convergence. Our main result fills this gap in the literature, by providing a rate of convergence of $\mathcal V^N$ to $\mathcal U$ in the presence of both idiosyncratic and common noise.

We emphasise that quantitative information about the convergence toward the mean-field limit is particularly important for numerical applications. Obtaining a convergence of the value functions with a rate also happens to be a useful starting point in order to prove finer propagation of chaos results, as illustrated in \cite{CaSou22} which is based on the results of the present paper.

%\\

The primary challenge we face is related to the (lack of) regularity of $\mathcal{U}$.  Indeed, if $\mathcal{U}$ is a smooth solution solution to \eqref{eq.HJcnintro}, then the projections $\mathcal{U}^N : [0,T] \times (\R^d)^N \to \R$ given by $\mathcal{U}^N(t,\bx) = \mathcal{U}(t,m_{\bx}^N)$ are smooth solutions of the HJ equation
\be\label{eq.HJBNapprox}
\left\{\begin{array}{l}
\ds -\partial_t \mathcal U^N(t,{\bf x}) -\sum_{j=1}^N \Delta_{x^j}\mathcal U^N(t,{\bf x}) -a_0 \sum_{i,j=1}^N {\rm tr} (D^2_{ij}\mathcal V^N(t,\bx))\\
\ds  +\frac1N\sum_{j=1}^N H(x^j, N D_{x^j}\mathcal U^N(t, {\bf x}))
%\ds \hspace{4cm}
 =\mathcal F(m^N_{{\bf x}}) + E_N(t,\bx) \ \ {\rm in}\ \  (0,T)\times (\R^d)^N, \\%[2mm]
\ds  \mathcal U^N(T,{\bf x})= \mathcal G(m^N_{{\bf x}})\ \  {\rm in} \ \ (\R^d)^N,
\end{array}\right.
\ee
with $E_N(t,\bx) = -N^{-2} \sum_{ j = 1}^N {\rm tr} (D_{mm} \mathcal{U}(t,m_{\bx}^N,x_i,x_i)).$
%\begin{align*}
%    E_N(t,\bx) = -\frac{1}{N^2} \sum_{ j = 1}^N {\rm tr} (D_{mm} \mathcal{U}(t,m_{\bx}^N,x_i,x_i)).
%\end{align*}

%\vs
If $D_{mm} \mathcal{U}$ is bounded, then it is immediate that $|E_n| = O(1/N)$. Thus,  $\mathcal{U}^N$ solves the same equation as $\mathcal{V}^N$ up to a term of order $O(1/N)$. By a comparison argument, we conclude that $|\mathcal{U} - \mathcal{V}| = O(1/N)$, that is,  there exists a constant $C$ such that, for all  $t \in [0,T]$ and $\bx \in (\R^d)^N$,
%\begin{align*}
  $  |\mathcal{V}^N(t,\bx) - \mathcal{U}(t,m_{\bx}^N)| \leq C/N. $
  %, \quad t \in [0,T], \,\, \bx \in (\R^d)^N.$
%\end{align*}
See also \cite{gpw21} for more on what convergence results can be obtained once \eqref{eq.HJcnintro} has a sufficiently smooth solution. This argument is similar to the approach taken in \cite{CDLL, CaDeBook} to study
the convergence problem in the context of mean field games (see Lasry and Lions  \cite{LLJapan}) in situations where a classical solution to the so-called master equation is known to exist; also see  Bayraktar and Cohen \cite{BaCo18} and Cecchin and Pelino \cite{CePe19} for related results. In this setting, convergence is related to the propagation of chaos for the optimal trajectories of the game.
%\vs

Of course, the simple argument outlined above works only when the value function $\mathcal{U}$ is smooth. For instance,  this would is the case if the maps $\mathcal F$ and $\mathcal G$ are convex and sufficiently smooth (see the discussion in Chap. 3.7 of \cite{CDLL}). However,  we do not assume such a convexity property and the map $\mathcal U$ is expected to present discontinuities in its first-order derivative, as  can be seen in, for instance, Briani and Cardaliaguet \cite{BrCa}. Because of this, the techniques in \cite{CDLL, CaDeBook} break down.

%\vs
When the value function is not smooth, the convergence rate has been studied primarily in the case of finite state space; see  Kolokoltsov \cite{Ko12} and Cecchin \cite{Ce21}. In this finite state space setting, the convergence rate is  of order $1/\sqrt{N}$. Indeed, as explained in \cite{Ce21},  the particle system is then a kind of discretization of the continuous McKean-Vlasov equation.

%We refer also to the recent work of Bayraktar and Chakraborty  \cite{barcha21}, which adapts the arguments of \cite{Ce21} to the continuous state space under a certain structural condition (see Assumption 2.2 in \cite{barcha21}) which guarantees that the running cost depends on $m$ only through finitely many of its moments. In this finite state space setting, the convergence rate is  of order $1/\sqrt{N}$. Indeed, as explained in \cite{Ce21},  the particle system is then a kind of discretization of the continuous McKean-Vlasov equation.

%\vs
The situation is different and much more difficult in the continuous state space setting. This might come as a surprise since the convergence rate for particle systems is very well understood; see, for instance,  Fournier and Guillin,  \cite{FoGu}. The main difficulty, however,   is that, even though the optimal feedback in the particle system remains bounded independently of $N$ (see Lemma \ref{lem.estiVN}), it cannot be expected to be uniformly continuous as a function of the empirical measure. Indeed, this uniform continuity would imply the $C^1-$regularity of the limit $\mathcal U$, which does not hold in general. So it is necessary to find a way to show that, despite the fact that the controls played by each particle might be very different, a kind of concentration of measure takes place.
%\vs

Finally, we mention that a result similar to the one we prove here in the context of Mean-Field Control, remains an open question in the context of Mean-Field Games. The difficulty is that the $N$-player game is described this time by a system of $N$ coupled HJB equations, instead of just one HJB equation in the present case. And it proves difficult to obtain estimates on the PDE system which are uniform in the number of players.

%Our main result is the following algebraic convergence rate: there exists $\beta\in (0,1]$ (depending on dimension only) and $C>0$ (depending on the data of the problem) such that, for any $(t,\bx)\in [0,T]\times (\R^d)^N$ one has:
%\be
%\abs{\mathcal V^N(t, m_{\bx}^N) - \mathcal U(t, m_{\bx}^N)} \leq CN^{-\beta} (1 + M_4^2(m^N_{\bx})),
%\ee
%where $M_4(m^N_{\bx})= \left(N^{-1}\sum_{i=1}^N|x^i|^4\right)^{1/4}$ is the fourth order moment of the measure $m^N_{\bx}$. Although the exact value of $\beta$ could be traced back through the computation, it is clearly not optimal. In particular, it is very far from the one obtained in the standard particle system. In the same way, even if some dependence with respect to a moment of the measure is expected, the dependence given here is probably far from sharp.
%
\subsection{Strategy of the proof}
We discuss briefly the strategy of the proof.  We first point out that we do  not rely on a propagation of chaos, which we cannot prove at this stage. Indeed, as for a given initial condition there might be several optimal trajectories for the limit problem, a propagation of chaos is not expected to hold without additional assumptions on the initial data. The main ingredients for the proof are,  uniform in $N$,  Lipschitz and semiconcavity estimates for $\mathcal V^N$, and a concentration inequality. To bound from above $\mathcal V^N$ by $\mathcal U$ is relatively easy, because $\mathcal V^N$ can be transformed into an approximate subsolution for the Hamilton-Jacobi equation \eqref{eq.HJcnintro}. The opposite inequality is much trickier, because it  seems impossible to transform an optimal control for the $\mathcal V^N$,  in which the control depends on each particle,  into a feedback for $\mathcal U$. We overcome this difficulty by dividing the particles into subgroups in such a way that the optimal controls for the particles in each subgroup are close and show a propagation of chaos, based on a concentration inequality, for each subgroup. The proof being technical, we first show the result when there is no common noise, and, in a second step, extend the result to problems with  common noise.
%\vs

\subsection{Organization of the paper}
In the rest of the introduction we fix notation. We state the assumptions and the main result in section \ref{sec.mainresult}. As the proof of the convergence rate is technical, we start in section \ref{sec.withoutCN} with the problem without common noise. Indeed this case contains the main ideas without the extra technicalities due to the common noise. We first give some estimates on $\mathcal V^N$ and $\mathcal U$ (subsection \ref{subsec.regu}), then show the relatively easy bound from above for $\mathcal V^N$ in subsection \ref{subsec.easy}. The main part of the proof, that  is, the bound from below,  which is the aim of subsection \ref{subsec.main} requires a concentration inequality proved in subsection \ref{subsec.ProofLemma}. We explain the adaptation  of the proof to the case with common noise in section \ref{sec.CN}.

%\section{Assumptions and main result}\label{sec.mainresult}

\subsection{Notation} \label{subsec:assumptions}
We work on $\R^d$, write $I_d$ for the identity matrix in $\R^d$, and $B_R$ for the ball in $\R^d$ centered at the origin with radius $R$.  For $\bx = (x^1,...,x^N) \in (\R^d)^N$, $m_{\bx}^N \in \sP(\R^d)$ stands for  the empirical measure of $\bx$, that is, $m_{\bx}^N = \frac{1}{N} \sum_{i = 1}^N \delta_{x^i}$.
%We write $I_d$ for the identity matrix in $\R^d$, and $B_R$ for the ball in $\R^d$ centered at the origin with radius $R$.
If $\varphi: [0,T] \times \R^d \rightarrow \R^d$ is smooth enough, we write $D\varphi$, $\Delta \varphi$ and $D^2 \varphi$ for the derivatives with respect to space and  $\partial_t \varphi$ and $\partial_{tt} \varphi$ the derivatives with respect to time. Similarly, for $\mathcal{V} = \mathcal{V}(t,x^1,...,x^N) : [0,T] \times (\R^d)^N \rightarrow \R$, we define the derivatives $D_{x^k}\mathcal{V}$, $\Delta_{x^k} \mathcal{V}$, $\partial_t \mathcal{V}$.   We denote by $\sP(\R^d)$ the set of Borel probability measures on $\R^d$ and note that, if $m\in \sP(\R^d)$ has a density, for  simplicity of notation, $m$ is  also used to denote the density.  Given $m \in \sP(\R^d)$ and $p \geq 1$, $M_p(m)$ is the $p^{th}-$moment of $m$, that is,  $M_p(m) = \int_{\R^d} |x|^p dm$, and   $\sP_p(\R^d)$ the set of $m \in \sP(\R^d)$ such that $M_p(\R^d) < \infty$. We endow $\sP_p(\R^d)$ with the Wasserstein metric ${\bf d}_p$, defined by
%\begin{align*}
   $ {\bf d}^p_p(m, m') := \inf_{\pi \in \Pi(m,m')} \int_{\R^d} |x - y|^p d\pi(x,y),$
%\end{align*}
where $\Pi(m,m')$ is the set of all $\pi \in \sP(\R^d \times \R^d)$ with marginals $m$ and $m'$. Let  ${\bf L}$ be  the set of all 1-Lipschitz functions from $\R^d$ to $\R$. We recall the duality formula
%\begin{align*}
   $ {\bf d}_1(m,m') = \sup_{\phi \in {\bf L}} \int_{\R^d} \phi d(m- m').$
%\end{align*}
%where the supremum is taken over all 1-Lipschitz maps $\phi : \R^d \to \R$.
%where  ${\bf L}$ is  the set of all 1-Lipschitz functions from $\R^d$ to $\R$.
%\vs
%We write ${\bf L}_R$ for  the set of all 1-Lipschitz functions $\phi : B_R \subset \R^d \to [-R,R]$.  For any $\phi \in {\bf L}_R$, $\tilde{\phi}$ is the extension $\tilde{\phi} : \R^d \to [-R,R]$ (note that $\tilde{\phi}$ is also $1$-Lipschitz) given by
%$\phi(x)= \phi(x) \; \text{if} \;  |x| \leq R, \ %\frac{2R -|x|}{R}
%(2R -|x|)/R\phi((R x/ |x|) \; \text{if} \; R < |x| < 2R, \ \text{and} \ 0 \; \text{if} \;  |x| \geq 2R.$
%\\
%  %  0 & |x| \geq 2R.
%
%\begin{align*}
%    \tilde{\phi}(x) = \begin{cases}
%    \phi(x) & |x| \leq R, \\[1.5mm]
%    \frac{2R -|x|}{R} \phi(\frac{R}{|x|}x) & R < |x| < 2R, \\
%    0 & |x| \geq 2R.
%    \end{cases}
%\end{align*}
%For $\bx = (x^1,...,x^N) \in (\R^d)^N$, we denote by $m_{\bx}^N \in \sP(\R^d)$ the empirical measure of $\bx$, that is, $m_{\bx}^N = \frac{1}{N} \sum_{i = 1}^N \delta_{x^i}$. We write $I_d$ for the identity matrix in $R^d$, and $B_R$ for the ball in $\R^d$ centered at he origin with radius $R$.   If $\varphi: [0,T] \times \R^d \rightarrow \R^d$ is smooth enough, we denote by $D\varphi$, $\Delta \varphi$ and $D^2 \varphi$ the derivatives with respect to space and by $\partial_t \varphi$ and $\partial_{tt} \varphi$ the derivatives with respect to time. Similarly, for $\mathcal{V} = \mathcal{V}(t,x^1,...,x^N) : [0,T] \times (\R^d)^N \rightarrow \R$ we define the derivatives $D_{x^k}\mathcal{V}$, $\Delta_{x^k} \mathcal{V}$, $\partial_t \mathcal{V}$.
For $\mathcal{U} : \mathcal{P}_1(\R^d) \rightarrow \R$ smooth enough, $\dfrac{\delta U}{\delta m} : \mathcal{P}_1(\R^d) \times \R \rightarrow \R$ denotes the  linear functional derivative, which satisfies, for all $m,m' \in \mathcal{P}_1(\R^d)$ and all $h \in (0,1)$,
\newline
$ \mathcal{U}(m') - \mathcal{U}(m) = \int_0^1 \int_{\R^d} \frac{\delta U}{\delta m}((1-h)m+hm',x)(m'-m)(dx)dh.$
We  use  the standard convention $\int_{\R^d} \dfrac{\delta U}{\delta m}(m,x)m(dx)=0$ for all $m \in \mathcal{P}_1(\R^d)$. If $\dfrac{\delta \mathcal{U}}{\delta m}$ is differentiable with respect to the space variable, we define the $L$-derivative of $\mathcal U$ by $D_mU(m,x) =D_x\dfrac{\delta U}{\delta m}(m,x)$. Higher order derivatives are defined similarly.

We refer to \cite{CDLL} Chapter 2 and  \cite{CaDeBook} Book 1, Chapter 5 for the properties of the $L$-derivatives.

%\vs
Finally, throughout the paper we use $C$ for positive constants that depend, unless otherwise noted,  on the data and may change from line to line with this being made explicit.
%
%\subsection{Acknowlegments} Cardaliaguet was partially supported by the Air Force Office for Scientific Research grant FA9550-18-1-0494 and the Institute for Mathematical and Statistical Innovation. Daudin was partially supported by the Institute for Mathematical and Statistical Innovation. Jackson was partially supported by the National Science Foundation grant  DGE-161040 and the Institute for Mathematical and Statistical Innovation. Souganidis was partially supported  by the National Science Foundation grant DMS-1900599, the Office for Naval Research grant N000141712095 and the Air Force Office for Scientific Research grant FA9550-18-1-0494.  All authors would like to thank the Institute for Mathematical and Statistical Innovation for its hospitality
%during the Fall 2021 program.
%
%
\section{Assumptions and main result}\label{sec.mainresult}
\subsection{Assumptions}
We now state our standing assumptions on the maps $H, F$ and $G$, which constitute the data of our problem. We keep in mind that $L:\R^d\times \R^d\to \R$ is a Legendre transform of $H$ with respect to the last variable, that is, $L(x,a)= \sup_{p\in \R^d}[ -a\cdot p-H(x,p)].$
%$$
%L(x,a)= \sup_{p\in \R^d}[ -a\cdot p-H(x,p)].
%$$
We assume that
\be \label{hyp.quad}
\begin{cases}
 \text{$H\in C^2(\R^d\times \R^d;\R)$ and for some  $c$, $C > 0$ and  all $(x,p) \in \R^d \times \R^d,$}  \\%[1.2mm]
 - C + c |p|^2 \leq H(x,p) \leq C + \frac{1}{c} |p|^2 \ \ \text{and} \ \ |D_xH(x,p)| \leq C(|p|+1).
 % \text{for all} \quad (x,p) \in \R^d \times \R^d,
\end{cases}
\ee
%\begin{equation}\label{ass1}
%\be\label{takis2}
%\text{$H:\R^d\times \R^d\to \R$ is of class $C^2$},
%\ee
\be\label{convex}
\begin{cases}
\text {$H$ is locally strictly convex with respect to the last variable,}\\%[1.2mm]
\text{that is, for any $R>0$, there exists $c_R>0$ such that}\\%[1.2mm]
D^2_{pp}H(x,p)\geq c_R I_d\ \  \text{for all} \ \  (x,p)\in \R^d\times  \overline{B_R}, %\R^d \quad |p|\leq R,
\end{cases}
\ee
%where $I_d$ is the identity matrix of size $d$.
%\be\label{hyp.growth}
%\begin{cases}
%\text{there exists a constant $C>0$ such that}\\[1.2mm]
%|D_xH(x,p)| \leq C(|p|+1)\ \ \text{for all } (x,p)\in \R^d\times \R^d,
%\end{cases}
%\ee
\be\label{hyp.growth2}
\begin{cases}
\text{for any $R>0$, there exists $C_R>0$ such that}\\%[1.2mm]
|D^2_{xx}H(x,p)| + |D^2_{xp}H(x,p)|\leq C_R \ \  \text{for all} \quad (x,p)\in \R^d \times   \overline{B_R},
\end{cases}
\ee
%\be \label{hyp.l}
%\begin{cases}
%\color{blue} \text{ for any $R > 0$, there exists $C_R > 0$ such that}\\[1.2mm]
%|D_a L(x,a)| \leq C_R \quad \text{for all} \quad (x,a) \in \R^d \times \overline{B_R}
%\end{cases}
%\ee
%\color{black}
\be\label{F}
\text{$\mathcal F \in C^2(\Pk; \R)$ with $\mathcal F$, $D_m\mathcal F$, $D^2_{ym}\mathcal F$ and $D^2_{mm}\mathcal F$ uniformly bounded,}
\ee
and, finally,
\be\label{G}
\text{$\mathcal G\in C^4(\Pk; \R)$ with all derivatives up to order $4$ uniformly bounded.}
\ee
For simplicity, in what follows we put together all the assumptions above in
\be\label{ass.main}
\text{assume that \eqref{hyp.quad}, %\eqref{takis2},
\eqref{convex},
%\eqref{hyp.growth},
\eqref{hyp.growth2},  \eqref{F} and \eqref{G} hold,}
\ee

%\begin{assumption} \label{assump.main}
%The maps $H:\R^d\times \R^d\to \R$, $\mathcal F:\Pk\to \R$ and $\mathcal G:\Pk\to \R$ satisfy
%\end{assumption}
%\begin{itemize}
%\item $H$ is of class $C^2$ and strictly convex with respect to the last variable: for any $R>0$ there exists $c_R>0$ such that
%$$
%D^2_{pp}H(x,p)\geq c_R I_d\qquad \forall (x,p)\in \R^d\times \R^d, \; |p|\leq R,
%$$
%where $I_d$ is the identity matrix of size $d$.
%In addition we assume that there exists a constant $C>0$ such that
%\be\label{hyp.growth}
%|D_xH(x,p)| \leq C(|p|+1)\qquad \forall (x,p)\in \R^d\times \R^d
%\ee
%and that, for any $R>0$, there exists $C_R>0$ such that
%\be\label{hyp.growth2}
%|D^2_{xx}H(x,p) + D^2_{xp}H(x,p)|\leq C_R \qquad \forall (x,p)\in \R^d\times \R^d, \; |p|\leq R.
%\ee
%
%
%\item The map $\mathcal F:\Pk\to \R$ is of class $C^2$ with $\mathcal F$, $D_m\mathcal F$, $D^2_{ym}\mathcal F$ and $D^2_{mm}\mathcal F$ uniformly bounded. The map  $\mathcal G:\Pk\to \R$ is of class $C^4$ with all derivatives (in $m$ and then in the additional variables) up to order $4$ uniformly bounded.
%\end{itemize}

\begin{rmk}
We make the following comments regarding  \eqref{ass.main}.
%\begin{enumerate}

(i)~The strict convexity of $H$ with respect to the  gradient  variable is standard in optimal control. In particular, it implies that $L$ has the same regularity as $H$.

(ii) Although  the at most  linear growth in $p$ of $D_xH$, which is used to obtain, independent of $N$,  Lipschitz estimates on the value function $\mathcal V^N$ (see Lemma \ref{lem.estiVN}),  is somehow restrictive,  we do not know if it is possible to avoid it. It is, however,  satisfied by, for instance,  a Hamiltonian of the form $H(x,p)= |p|^2+ V(x)\cdot p$ for some smooth and globally Lipschitz continuous vector field $V:\R^d\to \R^d$. %This assumption is used to obtain Lipschitz estimates on the value function $\mathcal V^N$ independent of $N$ (see Lemma \ref{lem.estiVN}).

%(iii)~The fact that the ``full'' Hamiltonian $(x,p, m)\to H(x,p)-\mathcal F(m)$ has a separate form is not completely necessary.  Some (small) extensions are possible, but we have decided to keep it in a separate form in order to avoid unnecessary technicalities.

(iii)~The fact that the ``full'' Hamiltonian $(x,p, m)\to H(x,p)-\mathcal F(m)$ has a separate form is not completely necessary. In particular, our method allows to handle dynamics of the form 
$$ dX_t = b(X_t,\mathcal{L}(X_t|\mathcal{F}_t^{B^0}))dt + \alpha_tdt + \sqrt{2}dB_t + \sqrt{2a_0}dB_t^0$$ 
for some bounded nonlinear drift $b: \R^d \times \mathcal{P}_2(\R^d) \rightarrow \R^d$ with bounded derivatives. However this leads to much heavier computations that we decided to avoid to keep the paper as clear as possible.

(iv)~The uniform bounds on $D_m\mathcal F$ and $D_m\mathcal G$ imply that both maps are Lipschitz continuous in $\Pk$. The additional smoothness  is used to obtain, independent of $N$, semiconcavity estimates on the value function $\mathcal V^N$ (see Lemma \ref{lem.semiconcesti}).
%\end{enumerate}

(v)~As $L$ is the Legendre transform of $H$, \eqref{convex} implies, after a simple calculation, that,  for any $R > 0$, there exists $C_R > 0$ such that
\be \label{hyp.l}
%\begin{cases}
%\text{ for any $R > 0$, there exists $C_R > 0$ such that}%\\[1.2mm]
|D_a L(x,a)| \leq C_R \ \  \text{for all} \ \ (x,a) \in \R^d \times \overline{B_R}.
%\end{cases}
\ee
%Although this is standard, we repeat its proof for completeness. Indeed, for  $|a|\leq R$, $x\in \R^d$ and $p=D_aL(x,a)$,  in view of \eqref{takis2}, we have  $L(x,a)= -a\cdot p-H(x,p)$. It then follows from \eqref{hyp.quad}, that %we have
%$$
%-R|p|-C+(1/c)|p|^2 \geq L(x,a)\leq \sup_{p'} \{-a\cdot p' +C-c|p'|^2 \} \leq C+\frac{R^2}{4c}. $$
\end{rmk}

\subsection{The formulation of the problem}
For concreteness, we fix throughout the paper a filtered probability space $(\bar{\Omega}, \bar{\sF}, \bar{\mathbb{F}}= (\bar{\sF})_{t \geq 0}, \bar{\bP})$ satisfying the usual conditions and hosting independent $d$-dimensional Brownian motions $B^0$ and $(B^k)_{k \in \N}$.

\subsubsection{The definition of $\mathcal{V}^N$}
The definition of $\mathcal{V}^N$ and the relevant quantities/functions were  given and discussed  in the introduction--see  \eqref{def.VN} and  \eqref{takis1},% and \eqref{m},
where it was also explained that, assuming  \eqref{ass.main},
%straightforward, and has essentially been explained in the introduction.
% For $N \in \N$, we define $\mathcal{A}^N$ to be the set of all progressively measurable process $\alpha = (\alpha^k)_{k = 1}^N$ in $L^2([0,T] \times \Omega; (\R^d)^N)$. For any $(t_0,\bx_0) \in [0,T] \times (\R^d)^N$, we set
%\begin{align} \label{def.vnwocommon}
%    \mathcal{V}^N(t_0, \bx_0) = \inf_{\alpha \in \sA^N} \E\left[ \int_{t_0}^T (\frac{1}{N}\sum_{k=1}^N L(X^k_t,\alpha^k_t) +\mathcal F(m^N_{{\bf X}_t}))dt +
%\mathcal G(m^N_{{\bf X}_t}) \right],
%\end{align}
%where $\bf{X}$ $= (X^k)_{k = 1}^N$ is the $(\R^d)^N$-valued state process given by
%\begin{align*}
%     X_t^k = x_0^k + \int_{t_0}^t \alpha_s^k ds + \sqrt{2} B_t^k + \sqrt{2a_0} B_t^0.
%\end{align*}
$\mathcal{V}^N$ is the unique classical solution to the Hamilton-Jacobi equation \eqref{eq.HJBN} and that the infimum in \eqref{def.VN} is achieved (in feedback form) by the function $\alpha = (\alpha^k)_{k = 1}^N : [0,T] \times (\R^d)^N \to \R^N$ given by
\be\label{takis11}
\alpha_k(t,\bx) = - D_p H(x^k, ND_{x^k} \mathcal{V}^N(t,\bx)).
\ee

\subsubsection{The definition of $\mathcal{U}$ without common noise}
Suppose now that $a_0 = 0$. To define $\mathcal{U}$, it is  more intuitive to work with closed-loop controls, and to view the problem in terms of deterministic control of the associated Fokker-Planck equation.
%\vs

For fixed $(t_0, m_0) \in [t_0,T] \times \sP_2(\R^d)$, let  $\mathcal{A}(t_0, m_0)$ be the set of pairs $(m,\alpha)$ with $m = (m_t)_{t \in [t_0,T]} = (m(t, \cdot))_{t \in [t_0,T]} \in C^0([t_0,T];\sP_2(\R^d))$, $\alpha : [t_0,T] \times \R^d \to \R^d$ measurable such that $\int_{t_0}^T \int_{\R^d} |\alpha(t,x)|^2 m(t,dx)dt < \infty$ and $m$ solves (in the sense of distributions) the Fokker-Planck equation
%\begin{align*}
   $$ \partial_t m = \Delta m - \text{div}(m \alpha) \ \ \text{in} \ \ (t_0, T]\times \R^d \ \ \ \text{and}  \ \ m(t_0,\cdot) = m_0.$$
%    \end{align*}

%%\begin{enumerate}
%    \item $m$ solves (in the sense of distributions) the Fokker-Planck equation \begin{align*}
%    \partial_t m = \Delta m - \text{div}(m \alpha) \ \ \text{in} \ \ (t_0, T]\times \R^d \ \ \ \text{and}  \ \ m(t_0,\cdot) = m_0,
%    \end{align*}
%    \item $\int_{t_0}^T \int_{\R^d} |\alpha(t,x)|^2 m(t,dx)dt < \infty$.
%\end{enumerate}
%\vs
Then we define $\mathcal{U} : [0,T] \times \sP_2(\R^d) \to \R$ by
%\begin{align}
\be\label{def.uwocommon}
\begin{split}
    &\mathcal U(t_0,m_0) =\\
   & \inf_{(m,\alpha) \in \sA(t_0,m_0)} \Big\{ \int_{t_0}^T \big( \int_{\R^d} L(x, \alpha(t,x)) m(t,dx) + \sF(m_t)) dt + \sG(m_T) \Big\}.
 \end{split}
 \ee
%\end{align}

Notice that it is not restrictive to consider feedback controls which are only function of the time and space variables instead of controls which depend as well on the probability measure $m(t)$. Indeed, if $\tilde{\alpha}:[0,T] \times \R^d \times \mathcal{P}_2(\R^d)$ is such control with corresponding trajectory $\tilde{m} \in \mathcal{C}([0,T],\mathcal{P}_2(\R^d))$, we can obtain the curve $t \mapsto \tilde{m}(t)$ with the same cost by considering the control $\alpha(t,x) = \tilde{\alpha}(t,x,\tilde{m}(t))$.

One advantage to using this deterministic formulation of the McKean-Vlasov control problem is that, at least in the absence of common noise, the dynamic programming principle is straightforward. In particular, we can assert the following, which will be useful in what follows.
\begin{prop} \label{pro.dppnocommon}
Assume \eqref{ass.main}.  Then, for any $0 \leq t_0 \leq t_1 \leq T$,  %$\mathcal{U}$ defined by \eqref{def.uwocommon} have
\begin{align*}
    \mathcal{U}(t_0,m_0) = \inf_{(m,\alpha) \in \sA(t_0,m_0)}\Big\{ \int_{t_0}^{t_1} \big( \int_{\R^d} L(x, \alpha(t,x)) m_t(dx) + \sF(m_t)) dt + \mathcal{U}(t_1, m_{t_1}) \Big\}.
\end{align*}
%for any $0 \leq t_0 \leq t_1 \leq T$.
\end{prop}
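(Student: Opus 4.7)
The strategy is the classical two-inequality argument for the dynamic programming principle, adapted to the deterministic Fokker-Planck formulation. Denote the right-hand side of the claimed identity by $W(t_0,m_0)$.

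\textbf{Step 1 (the inequality $W(t_0,m_0)\le \mathcal{U}(t_0,m_0)$).} I would fix any competitor $(m,\alpha)\in\sA(t_0,m_0)$ and observe that the restriction $(m|_{[t_1,T]},\alpha|_{[t_1,T]})$ belongs to $\sA(t_1,m_{t_1})$, because $m$ is continuous in $\sP_2$, satisfies the Fokker-Planck equation in the distributional sense on $(t_1,T]$, and $\alpha$ remains square-integrable against $m$ on $[t_1,T]$. Testing the definition of $\mathcal{U}(t_1,m_{t_1})$ against this restricted pair and adding the running cost on $[t_0,t_1]$ gives
\[
\int_{t_0}^{t_1}\!\Big(\!\int_{\R^d} L(x,\alpha(t,x))m_t(dx)+\sF(m_t)\Big)dt+\mathcal{U}(t_1,m_{t_1})\ \le\ \int_{t_0}^{T}\!(\ldots)dt+\sG(m_T).
\]
Taking the infimum on both sides over $(m,\alpha)\in\sA(t_0,m_0)$ yields $W(t_0,m_0)\le\mathcal{U}(t_0,m_0)$.

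\textbf{Step 2 (the inequality $\mathcal{U}(t_0,m_0)\le W(t_0,m_0)$).} Here I would argue by concatenation. Fix $\varepsilon>0$ and $(m,\alpha)\in\sA(t_0,m_0)$, and choose an $\varepsilon$-optimal pair $(m',\alpha')\in\sA(t_1,m_{t_1})$ for $\mathcal{U}(t_1,m_{t_1})$. Define
\[
\tilde m(t)=\begin{cases}m(t),& t\in[t_0,t_1],\\ m'(t),& t\in[t_1,T],\end{cases}\qquad \tilde\alpha(t,x)=\begin{cases}\alpha(t,x),& t\in[t_0,t_1],\\ \alpha'(t,x),& t\in[t_1,T].\end{cases}
\]
The point is to verify that $(\tilde m,\tilde\alpha)\in\sA(t_0,m_0)$: continuity in $\sP_2$ follows from the matching condition $m'(t_1)=m_{t_1}=m(t_1)$, square-integrability of $\tilde\alpha$ against $\tilde m$ is immediate from additivity on the two intervals, and the distributional Fokker-Planck equation on $(t_0,T]$ follows from the two partial identities by a standard test-function computation (integration by parts on each subinterval, with the boundary contributions at $t=t_1$ cancelling because both $m$ and $m'$ equal $m_{t_1}$ there). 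Using $(\tilde m,\tilde\alpha)$ as a competitor in the definition of $\mathcal{U}(t_0,m_0)$ gives
\[
\mathcal{U}(t_0,m_0)\ \le\ \int_{t_0}^{t_1}\!\Big(\!\int_{\R^d} L(x,\alpha(t,x))m_t(dx)+\sF(m_t)\Big)dt+\mathcal{U}(t_1,m_{t_1})+\varepsilon.
\]
Taking the infimum over $(m,\alpha)\in\sA(t_0,m_0)$ and letting $\varepsilon\downarrow 0$ produces $\mathcal{U}(t_0,m_0)\le W(t_0,m_0)$, and combining the two steps establishes the claim.

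\textbf{Expected main obstacle.} The one place where care is required is checking that the concatenated pair $(\tilde m,\tilde\alpha)$ is genuinely admissible, i.e.\ that it solves the Fokker-Planck equation in the distributional sense across the junction $t=t_1$. There is no selection issue since the choice of $(m',\alpha')$ is made only for the single terminal measure $m_{t_1}$ determined by $(m,\alpha)$, and there are no probabilistic filtration considerations in the $a_0=0$ case, so the whole argument remains elementary once this gluing is justified.
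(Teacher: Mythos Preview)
Your proof is correct and follows the standard two-inequality argument for the dynamic programming principle in deterministic control problems. The paper does not actually give a proof of this proposition: immediately before stating it, the authors remark that ``at least in the absence of common noise, the dynamic programming principle is straightforward,'' and they leave it at that. Your restriction/concatenation argument is exactly the elementary justification the authors have in mind, and the one point you flag (admissibility of the glued pair across $t=t_1$) is indeed the only thing requiring a moment's thought.
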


%\vs
\subsubsection{The definition of $\mathcal{U}$ with common noise}
To define $\mathcal{U}$ when $a_0 > 0$, we use again a form of closed-loop formulation, but this time the relevant Fokker-Planck equation becomes stochastic and we work with a notion of weak solution.
\vs
For fixed $(t_0,m_0) \in [0,T] \times \sP_2(\R^d)$, we define a control rule $\mathcal{R} \in \mathcal{A}(t_0,m_0)$ to be a tuple
%\begin{align*}
    $\mathcal{R} = (\Omega, \sF, \mathbb{F}, \bP, W, m ,\alpha), $
%\end{align*}
where $(\Omega, \sF,  \mathbb{F}) = (\sF_t)_{0 \leq t \leq T},\bP)$ is a filtered probability space supporting the $d$-dimensional Brownian motion $W$,  $\alpha = (\alpha_t)_{t_0 \leq t \leq T}$ is a $\mathbb{F}$-progressively measurable process taking values in $\linf(\R^d ; \R^d)$ and such that $\alpha$ is uniformly bounded, in the sense that \begin{align}
        \| { \sup_{t \in [t_0,T]} {\|{\alpha_t}\|}_{L^{\infty}(\R^d; \R^d)}}\|_{L^{\infty}(\Omega)} < \infty.
    \end{align}
and  $m$ satisfies the stochastic partial differential equation
%\begin{align}
\be\label{eq.mtcn}
\begin{split}
dm_t(x) &= \left[(1+a_0)\Delta m_t(x)-{\rm div}(m_t \alpha_t(x))\right]dt\\
&+ \sqrt{2a_0}Dm_t(x) \cdot dW_t \ \ \text{in} \ \ (t_0, T]\times \R^d \ \ \text{with} \ \ m_{t_0}=m_0 \ \ \text{in} \ \ \R^d.
%\\[1.2mm]
%m_{t_0}=m_0 \ \ \text{in} \ \ \R^d.
\end{split}
%\end{align}
\ee
%\begin{enumerate}
%    \item $(\Omega, \sF,  \mathbb{F}) = (\sF_t)_{0 \leq t \leq T},\bP)$ is a filtered probability space supporting the $d$-dimensional Brownian motion $W$,
%    \item $\alpha = (\alpha_t)_{t_0 \leq t \leq T}$ is a $\mathbb{F}$-progressively measurable taking values in $\linf(\R^d ; \R^d)$ and such that $\alpha$ is uniformly bounded, in the sense that
%    \begin{align}
%        \norm{ \sup_{t \in [t_0,T]} {\norm{\alpha_t}_{L^{\infty}(\R^d; \R^d)}}}_{L^{\infty}(\Omega)} < \infty.
%    \end{align}
%    \item $m$ satisfies the stochastic McKean-Vlasov equation
%%\begin{align}
%\be\label{eq.mtcn}
%\begin{cases}
%dm_t(x) = \left[(1+a_0)\Delta m_t(x)-{\rm div}(m_t \alpha_t(x))\right]dt + \sqrt{2a_0}Dm_t(x) \cdot dW_t \ \ \text{in} \ \ (t_0, T]\times \R^d,\\[1.2mm]
%m_{t_0}=m_0 \ \ \text{in} \ \ \R^d.
%\end{cases}
%%\end{align}
%\ee
%\end{enumerate}
The last condition means that, $\P-$a.s., for any smooth test function $\phi\in C^\infty([0,T]\times \R^d)$ with a compact support and for any $t\in [t_0,T]$,
\begin{align*}
& \int_{\R^d} \phi_t(x)m_t(dx)  = \int_{\R^d} \phi_0(x)\bar m_0(dx)+ \int_{t_0}^t \int_{\R^d} (\partial_t \phi_s(x)
 + \alpha_s(x)\cdot D\phi_s(x)\\
 &+ (1+a_0)\Delta \phi_s(x))m_s(dx)ds
  + \int_{t_0}^t \sqrt{2a^0}\int_{\R^d} D\phi_s(x)m_s(dx)\cdot dW_s.
 \end{align*}
Now we define
%\begin{align}
\be \label{def.ucommon}
\begin{split}
  &  \mathcal U(t_0,m_0) = \\
  &  \inf_{\mathcal{R} \in \sA(t_0,m_0)} \E^{\bP} \Big[  \int_{t_0}^T \big( \int_{\R^d} L(x, \alpha_t(x)) m_t(dx) + \sF(m_t)) dt + \sG(m_T) \Big].
\end{split}
\ee
%\end{align}

The connection to the informal description \eqref{def.U} of $\mathcal{U}$ is that, if $\alpha$ is a bounded $L^\infty(\R^d ; \R^d)$-valued process defined on some filtered probability space probability space $(\Omega, \sF, \mathbb{F} = (\sF_t)_{0 \leq t \leq T}, \bP)$ supporting independent Brownian motions $B$ and $W$, $\alpha$ is a adapted to the filtration of $W$ and $X$ is a strong solution to the McKean-Vlasov equation
\begin{align} \label{xdynamics}
X_t = \bar X_{t_0}+\int_{t_0}^t \alpha_s(X_s) ds +\sqrt{2}(B_t - B_{t_0}) + \sqrt{2a_0} (W_t - W_{t_0}),
\end{align}
then $(\Omega, \sF, \mathbb{F}^W ,W,m,\alpha) \in \mathcal{A}(t_0,m_0)$, where $m_t = \sL(X_t | W)$, that is,  $m$ is the conditional law of $X$ given the filtration of the Brownian motion $W$.

%\vs
As in the case $a_0 = 0$, we have the following  dynamic programming principle.
\begin{prop} \label{prop.dpcommon}
Assume \eqref{ass.main}. Then, for any $0 \leq t_0<t_1\leq T$,  for  $\mathcal{U}$ defined by \eqref{def.ucommon}, we have
$$
\mathcal U(t_0,m_0)= \inf_{( m,  \alpha) \in \mathcal{A}(t_0,m_0)} \E^{\bP}\left[ \int_{t_0}^{t_1} (\int_{\R^d} L(x, \alpha_t(x))m_t(dx)+\mathcal F(m_t))dt +\mathcal U(t_1,m_{t_1})\right].
$$
%for any $0 \leq t_0<t_1\leq T$.
\end{prop}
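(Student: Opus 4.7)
The plan is to prove the two inequalities separately, following the standard pattern for weak-formulation DPPs.

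\textbf{The $\geq$ direction.} Fix any $\mathcal{R} = (\Omega, \sF, \mathbb{F}, \bP, W, m, \alpha) \in \mathcal{A}(t_0,m_0)$ and split the cost at $t_1$. Passing to a regular conditional probability given $\sF_{t_1}$, the shifted tuple
$(\Omega, \sF, (\sF_{t_1+s})_{s \geq 0}, \bP(\cdot\mid\sF_{t_1}), W_{t_1+\cdot}-W_{t_1}, m_{t_1+\cdot}, \alpha_{t_1+\cdot})$
is readily checked to be an admissible control rule starting from $m_{t_1}$, since the SPDE \eqref{eq.mtcn} for $m$ on $[t_1,T]$ reads as the same SPDE for the shifted process driven by the shifted Brownian motion. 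Applying the definition of $\mathcal{U}(t_1, m_{t_1})$ to this conditional rule yields, $\bP$-a.s.,
\begin{align*}
\E^{\bP}\Big[\int_{t_1}^T \Big(\int_{\R^d} L(x,\alpha_t(x))m_t(dx)+ \sF(m_t)\Big)dt + \sG(m_T) \,\Big|\, \sF_{t_1}\Big] \geq \mathcal{U}(t_1, m_{t_1}).
\end{align*}
Taking expectation, then infimum over $\mathcal{R}$, produces the $\geq$ inequality.

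\textbf{The $\leq$ direction.} Fix $\mathcal{R} \in \mathcal{A}(t_0,m_0)$ and $\epsilon > 0$. A preliminary step, proved by arguments parallel to the Lipschitz estimates on $\mathcal{V}^N$ in Lemma \ref{lem.estiVN}, is the continuity of $\nu \mapsto \mathcal{U}(t_1, \nu)$ on $\mathcal{P}_2(\R^d)$. Combining this continuity with a Borel selection theorem (or, equivalently, with a countable partition of $\mathcal{P}_2(\R^d)$ into cells of small ${\bf d}_1$-diameter) produces a measurable family $\{\mathcal{R}^\nu\}_{\nu}$ of $\epsilon$-optimal control rules $\mathcal{R}^\nu = (\Omega^\nu, \sF^\nu, \mathbb{F}^\nu, \bP^\nu, W^\nu, m^\nu, \alpha^\nu) \in \mathcal{A}(t_1, \nu)$. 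Working on an enlarged product probability space carrying $\mathcal{R}$ and an independent copy of this family, I would then build a pasted control rule $\bar{\mathcal{R}}$ that agrees with $\mathcal{R}$ on $[t_0,t_1]$ and, conditional on $m_{t_1}=\nu$, continues after $t_1$ with a time-shifted copy of $\mathcal{R}^\nu$. By construction $\bar m$ solves \eqref{eq.mtcn} on $[t_0,T]$ driven by the concatenated Brownian motion $\bar W$, so $\bar{\mathcal{R}} \in \mathcal{A}(t_0,m_0)$ and
\begin{align*}
\mathcal{U}(t_0,m_0) \leq J(\bar{\mathcal{R}}) \leq \E^{\bP}\Big[\int_{t_0}^{t_1} \Big(\int_{\R^d} L(x,\alpha_t(x))m_t(dx) + \sF(m_t)\Big) dt + \mathcal{U}(t_1, m_{t_1})\Big] + \epsilon.
\end{align*}
Taking infimum over $\mathcal{R}$ and sending $\epsilon \to 0$ yields the $\leq$ inequality.

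\textbf{Main obstacle.} The delicate point is the pasting step: one must assemble a single filtered probability space on which a single Brownian motion $\bar W$ drives \eqref{eq.mtcn} for the concatenated $\bar m$ throughout $[t_0,T]$, using a measurable family of near-optimal rules indexed by the genuinely random initial measure $m_{t_1}$. The continuity of $\mathcal{U}(t_1,\cdot)$ is the key ingredient that reduces the measurable selection to a standard argument, after which the remainder is routine filtration bookkeeping.
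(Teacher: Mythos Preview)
Your proposal is a direct, from-scratch proof of the DPP via the standard conditioning/pasting dichotomy, whereas the paper takes a completely different route: it does not reprove the DPP at all but instead imports it from Theorem~3.1 of \cite{DPT19}. Concretely, the paper first truncates the control set to a ball $B_R$ and shows (Lemma~\ref{lem.larger}) that $\mathcal{U}^R = \mathcal{U}$ for all $R \geq R_0$, using the uniform feedback bound from Lemma~\ref{lem.estiVN} together with the convergence $\mathcal{V}^{N,R} \to \mathcal{U}^R$ from \cite{DPT20}; it then uses the superposition/mimicking results of \cite{LSZ20} (Proposition~\ref{prop.equiv}) to identify $\mathcal{U}^R$ with the weak-control value $\mathcal{U}^R_w$ of \cite{DPT19}, to which the DPP of \cite{DPT19} applies verbatim; finally it unwinds these identifications. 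Your approach is more self-contained and conceptually transparent, but the pasting step you flag as ``routine filtration bookkeeping'' is exactly the substantial technical content that \cite{DPT19} spends many pages on in the weak formulation---you should not expect it to be short. Also, your justification of the continuity of $\nu \mapsto \mathcal{U}(t_1,\nu)$ is slightly off: Lemma~\ref{lem.estiVN} gives Lipschitz bounds on $\mathcal{V}^N$, not on $\mathcal{U}$; to transfer these to $\mathcal{U}$ in the common-noise case one needs the convergence $\mathcal{V}^N \to \mathcal{U}$ (this is precisely how Lemma~\ref{It'^ocommon} proceeds), so you are implicitly relying on \cite{DPT20} anyway. In short, both approaches are valid, but the paper's is far shorter because it outsources the hard measurable-selection and concatenation work to \cite{DPT19, LSZ20}.
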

Unlike in the case without common noise, where the control problem is deterministic and thus the dynamic programming principle is straightforward, in the common noise case we will need to use some machinery from Djete, Possama\"i and Tan \cite{DPT19} and  Lacker, Sholnikov and Zhang \cite{LSZ20} to verify that the dynamic programming principle holds.
 %in this setting.
To streamline the presentation, we present the proof of Proposition \ref{prop.dpcommon} as well as of some other technical results from \cite{DPT19, DPT20, LSZ20} in the Appendix.

\begin{rmk}
We could have defined $\mathcal{U}$ using \eqref{def.ucommon} when $a_0 = 0$ as well, and, in the end, it would be possible, thanks in part to Lemma \ref{It'^o} below, to prove that this is equivalent to \eqref{def.uwocommon}. We chose to define things separately with and without common noise mostly to avoid some unnecessary technicalities and to simplify the presentation for the reader interested in the case without common noise. The only mathematical reason for splitting up the definitions is that, for technical reasons,  it is convenient to work with $L^{\infty}-$feedback controls in the case of common noise, whereas without common noise we have no difficulty working with square-integrable controls.
\end{rmk}

\subsection{The main result}
With $\mathcal{V}^N$ defined by $\eqref{def.VN}$, $\mathcal{U}$ defined by \eqref{def.uwocommon}, if $a_0 = 0$, or \eqref{def.ucommon}, if $a_0 > 0$, we have the following result.

\begin{thm} \label{thm.main1}  Assume \eqref{ass.main}.  Then  there exists $\beta\in(0,1]$ depending  only on $d$ and $C>0$ depending on the data ($\mathcal{F}, \mathcal{G}, H$) such that, for any $(t,\bx)\in [0,T]\times (\R^d)^N$,
\newline
$
\left| \mathcal V^N(t, \bx)- \mathcal U(t, m^N_{\bx})\right| \leq C {N^{-\beta}} (1+ M_2(m^N_{\bx})).
$
\end{thm}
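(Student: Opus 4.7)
The plan is to establish the two-sided bound by proving the upper and lower inequalities separately, treating first the case without common noise ($a_0=0$) and then extending to $a_0>0$. Both directions rely on the uniform-in-$N$ Lipschitz and semi-concavity estimates for $\mathcal{V}^N$ (Lemmas \ref{lem.estiVN} and \ref{lem.semiconcesti}, to be established as preliminaries) and on Fournier--Guillin type estimates for the $\mathbf{d}_1$-distance between an empirical measure and its underlying law.

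The upper bound $\mathcal{V}^N(t_0,\bx) \leq \mathcal{U}(t_0,m_{\bx}^N) + CN^{-\beta}(1+M_2(m_{\bx}^N))$ is the easier direction. Starting from an $\varepsilon$-optimal pair $(m^*,\alpha^*) \in \mathcal{A}(t_0, m_{\bx}^N)$ for $\mathcal{U}(t_0,m_{\bx}^N)$, and mollifying if needed so that $\alpha^*$ is a smooth bounded feedback on $[t_0,T]\times \R^d$, we use the common closed-loop control $\alpha^*(t,\cdot)$ for every one of the $N$ particles. The particles then become (conditionally, if $a_0>0$) independent, so by Fournier--Guillin the empirical measure $m^N_{\bX_t}$ is $O(N^{-\beta(d)}(1+M_2(m_{\bx}^N)^{1/2}))$-close to $m^*_t$ in $\mathbf{d}_1$ in expectation. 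Since the running Lagrangian is evaluated at the same positions and under the common feedback, and since $\mathcal{F}$ and $\mathcal{G}$ are Lipschitz in $\mathbf{d}_1$, the resulting suboptimal $N$-particle cost matches $\mathcal{U}(t_0,m_{\bx}^N)$ up to the desired error.

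The lower bound $\mathcal{U}(t_0,m_{\bx}^N) \leq \mathcal{V}^N(t_0,\bx) + CN^{-\beta}(1+M_2(m_{\bx}^N))$ is the heart of the paper. Direct inversion of the optimal feedback $\alpha^*_k(t,\bx) = -D_pH(x^k, ND_{x^k}\mathcal{V}^N(t,\bx))$ into an admissible control for $\mathcal{U}$ is blocked by the lack of smoothness of the limit value, which would essentially require $\mathcal{U}\in C^1$. The plan is to partition $\R^d$ into small cubes at a scale $\varepsilon = \varepsilon(N) \to 0$ and group the particles according to which cube they occupy at time $t_0$. Within each subgroup, the semi-concavity and Lipschitz estimates on $\mathcal{V}^N$ will imply that all feedbacks $\alpha^*_k$ are close, in an averaged $L^2$ sense, to a common representative feedback that depends only on the subgroup index and time. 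A concentration inequality, proved in a dedicated subsection, will then show that the empirical measure of each subgroup's controlled trajectory stays close in $\mathbf{d}_1$ to the deterministic Fokker--Planck flow driven by this representative feedback, with a rate polynomial in the subgroup's size. Gluing together the per-subgroup flows weighted by the subgroup masses yields an admissible competitor for $\mathcal{U}(t_0,m_{\bx}^N)$ whose cost is within $O(N^{-\beta})$ of the $N$-particle cost.

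The main obstacle, and the step that really uses the semi-concavity, is precisely this subgroup concentration inequality: the feedback acting on each particle is genuinely particle-dependent through the full configuration $\bX$, so classical propagation of chaos does not apply directly, and the semi-concavity bound is what quantitatively controls the deviation of each individual feedback from its subgroup representative. Optimizing over the cube scale $\varepsilon$ and the concentration rate will pin down the final exponent $\beta$, which depends only on $d$ and is far from the $N^{-1/2}$ rate of the finite-state or smooth-$\mathcal{U}$ cases. Finally, the extension to $a_0>0$ runs the same strategy conditionally on $B^0$, using the stochastic Fokker--Planck formulation \eqref{def.ucommon} and the dynamic programming principle of Proposition \ref{prop.dpcommon}; the main additional cost is upgrading the concentration inequality to a conditional one, which is precisely why we first carry out the entire argument for $a_0=0$ before passing to the full statement.
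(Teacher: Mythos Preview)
Your overall architecture---two separate inequalities, uniform Lipschitz/semiconcavity estimates, a concentration inequality, $a_0=0$ first---matches the paper, and your upper-bound argument is fine (the paper takes a slightly different but equivalent route, lifting $\mathcal V^N$ to $\hat{\mathcal V}^N(t,m)=\int\mathcal V^N(t,\bx)\prod m(dx^j)$ and checking it is a subsolution of the infinite-dimensional HJ equation).

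The lower-bound plan, however, has two genuine gaps.

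First, you partition the particles by their \emph{positions} and assert that ``the semi-concavity and Lipschitz estimates on $\mathcal V^N$ will imply that all feedbacks $\alpha^*_k$ are close'' within each spatial cell. Semi-concavity is a one-sided Hessian bound and does not give a Lipschitz estimate on $ND_{x^k}\mathcal V^N$; spatial proximity of $x^k_0$ and $x^l_0$ does not force $ND_{x^k}\mathcal V^N(t_0,\bx_0)$ close to $ND_{x^l}\mathcal V^N(t_0,\bx_0)$ without a two-sided second-order bound, and that is precisely what fails when $\mathcal U$ is not $C^1$. The paper sidesteps this entirely by partitioning not in position space but in \emph{control space}: the feedbacks $-D_pH(x^k_0,ND_{x^k}\mathcal V^N(t_0,\bx_0))$ all lie in a fixed ball $B_R$ (Remark~\ref{rmk.bddcontrol}), so a $\delta$-net of $B_R$ has at most $C\delta^{-d}$ points and yields a partition with $J\le C\delta^{-d}$ classes (Lemma~\ref{takis40}). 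With your spatial partition the number of nonempty cells cannot be bounded in terms of the scale alone, and the within-cell control proximity is unproved.

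Second, and more structurally, you omit the viscosity-type doubling of variables that drives the paper's lower bound. The paper maximises $e^s(\mathcal U(s,m^N_{\by})-\mathcal V^N(t,\bx))$ minus quadratic penalties in $\bx-\by$, $s-t$ and $\by$, and works at the maximiser $(t_0,\bx_0,s_0,\by_0)$. The actual role of semi-concavity is Lemma~\ref{lem.semiconcTD}: at the maximiser, the penalisation supplies a matching \emph{lower} quadratic bound on $\mathcal V^N$ which, combined with the semi-concavity upper bound, sandwiches $\mathcal V^N$ and yields control of $\sum_k|ND_{x^k}\mathcal V^N(s,\bX_s)-ND_{x^k}\mathcal V^N(t_0,\bx_0)|$ for the \emph{same} index $k$ over a short time step $h$. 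This is what lets the paper freeze the controls to constants $\bar\alpha^j$ on $[t_0,t_0+h]$ and still nearly saturate the HJB equation for $\mathcal V^N$. Without the doubling and the resulting two-sided trap, the one-sided semi-concavity is not enough to close the argument.
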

\vs
%\begin{rmk} The proof extends without difficulty to a Hamiltonian $H:\R^d\times \R^d\times \Pk\to \R$  depending also on $m$ (so in a nonseparate way) if $H$ is also smooth in $m$ with
%??????
%
%
%This is the case if the  dynamics of the optimal control problem has a drift depending in a separate way of the control and the measure:
%$$
%X^k_t= x^k_0+\int_{t_0}^t (b(X^k_s, m^N_{{\bf X}_s})+\alpha^k_s)ds+ \sqrt{2} B_t
%$$
%\end{rmk}

For the convenience of the reader we repeat here the strategy of the proof. We detail in section \ref{sec.withoutCN} the proof of Theorem \ref{thm.main1} when $a_0=0$,  the adaptation to the case $a_0>0$ being the aim of section \ref{sec.CN}. The proof of Theorem \ref{thm.main1} requires several steps: We first obtain  uniform in $N$ regularity  (Lipschitz and semiconcavity) estimates on $\mathcal V^N$ %, namely Lipschitz and semiconcavity estimates in
in Lemma~\ref{lem.estiVN} and Lemma~\ref{lem.semiconcesti} respectively.  Then we show how to bound from above $\mathcal V^N$ by $\mathcal U$ plus an error term (Proposition \ref{lem.ineqeasy}). This estimate is relatively easy and boils down to transforming the map $\mathcal V^N$ into a subsolution of the HJ equation \eqref{eq.HJcnintro}.  The converse estimate, which  is more involved,  is the aim of Proposition \ref{prop.hardineq}. The technical reason is that we found no way to embed $\mathcal U$ into the equation for $\mathcal V^N$ as a subsolution. Actually, since  $\mathcal U$ is semiconcave, it is naturally a supersolution of that equation and the remaining term is a priori large. We overcome this issue by using locally optimal feedback of the $N-$problem for the continuous one, the main difficulty being to compare the empirical measure in the $N-$problem to the solution of the Fokker-Planck equation. This step, which is difficult, relies on a key concentration inequality, which we prove in section \ref{subsec.ProofLemma}.

\section{The proof of  Theorem \ref{thm.main1} without common noise} \label{sec.withoutCN}

We assume that $a_0=0$ and, throughout the proof,  we use the fact that $\mathcal V^N$ is the unique solution of the uniformly parabolic backward PDE \eqref{eq.HJBN} and, therefore, is smooth.

\subsection{Some regularity estimates}\label{subsec.regu}
%Throughout the proof we use the fact that $\mathcal V^N$ is the unique solution of the uniformly parabolic backward PDE \eqref{eq.HJBN} and, therefore, is smooth.
We first establish the, uniform in $N$, regularity estimates  for $\mathcal V^N$.

\begin{lem}\label{lem.estiVN} Assume \eqref{ass.main}. There exists a constant $C>0$ such that, for any $N\geq 1$,
$
\|\mathcal V^N\|_\infty+N \sup_j \|D_{x^j}\mathcal V^N\|_\infty +\|\partial_t \mathcal V^N\|_\infty \leq C.
$
\end{lem}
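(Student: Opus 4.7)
The plan is to prove the three bounds separately: the $L^\infty$ bound directly from the definition of $\mathcal{V}^N$, and the remaining two via maximum-principle/Bernstein arguments on differentiated versions of the HJB equation \eqref{eq.HJBN}.

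\emph{Step 1: $L^\infty$ bound on $\mathcal{V}^N$.} The upper bound follows by plugging the null control $\alpha\equiv 0$ into \eqref{def.VN}: the lower bound in \eqref{hyp.quad} gives $L(x,0)=-\inf_p H(x,p)\leq C$, and the boundedness of $\mathcal F,\mathcal G$ from \eqref{F}--\eqref{G} closes the estimate. For the lower bound, Legendre duality with the upper bound $H(x,p)\leq C+|p|^2/c$ produces $L(x,a)\geq c|a|^2/4-C\geq -C$ for every $(x,a)$, whence $\mathcal{V}^N\geq -C(1+T)$.

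\emph{Step 2: Gradient bound $N\sup_j\|D_{x^j}\mathcal{V}^N\|_\infty\leq C$.} Set $V_k(t,\bx):=N D_{x^k}\mathcal{V}^N(t,\bx)$. Differentiating \eqref{eq.HJBN} in $x^k$ and multiplying the result by $N$ produces, for each $k$, the backward linear parabolic equation
\begin{align*}
-\partial_t V_k-\sum_j\Delta_{x^j}V_k+\sum_j D_pH(x^j,V_j)\cdot D_{x^j}V_k+D_xH(x^k,V_k)=D_m\mathcal F(m^N_\bx,x^k),
\end{align*}
with terminal condition $V_k(T,\bx)=D_m\mathcal G(m^N_\bx,x^k)$. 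The crucial observation is that, by \eqref{F}--\eqref{G}, both the right-hand side and the terminal datum are bounded uniformly in $N,k,\bx$. I would then write out the equation for $|V_k|^2$ and apply the maximum principle. At an interior supremum (justified, after a standard penalization in $\bx$ to deal with non-compactness of $(\R^d)^N$, by the smoothness of $\mathcal{V}^N$ coming from the uniform parabolicity of \eqref{eq.HJBN}), the transport and Laplacian terms have favorable signs, leaving
\begin{align*}
-\partial_t|V_k|^2\leq -2V_k\cdot D_xH(x^k,V_k)+2V_k\cdot D_m\mathcal F(m^N_\bx,x^k)\leq C(|V_k|^2+1),
\end{align*}
where the last inequality uses the at-most-linear growth $|D_xH(x,p)|\leq C(1+|p|)$ from \eqref{hyp.quad}. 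Running Gr\"onwall backward from the bounded terminal value gives $\sup_{k,t,\bx}|V_k|\leq C$.

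\emph{Step 3: Time derivative.} Put $u:=\partial_t\mathcal{V}^N$. Since $H$ and $\mathcal F$ have no explicit time dependence, differentiating \eqref{eq.HJBN} in $t$ yields the homogeneous backward linear parabolic equation
\begin{align*}
-\partial_t u-\sum_j\Delta_{x^j}u+\sum_j D_pH(x^j,V_j)\cdot D_{x^j}u=0.
\end{align*}
The terminal value $u(T,\bx)$ can be read from \eqref{eq.HJBN} at $t=T$:
\begin{align*}
u(T,\bx)=-\sum_j\Delta_{x^j}\mathcal G(m^N_\bx)+\tfrac1N\sum_j H(x^j,D_m\mathcal G(m^N_\bx,x^j))-\mathcal F(m^N_\bx).
\end{align*}
Expanding the Laplacians in terms of $D^2_{ym}\mathcal G$ and $D^2_{mm}\mathcal G$, and combining \eqref{F}--\eqref{G} with \eqref{hyp.quad} evaluated on the bounded argument $D_m\mathcal G$, this quantity is bounded uniformly in $N$ and $\bx$. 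The maximum principle for the homogeneous equation then gives $\|u\|_\infty\leq\|u(T,\cdot)\|_\infty\leq C$.

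The main obstacle is Step~2: without the at-most-linear-in-$p$ growth of $D_xH$ from \eqref{hyp.quad}, the forcing term $D_xH(x^k,V_k)$ would be superlinear in $V_k$ and the Gr\"onwall loop would fail to close. This is precisely the restriction flagged by the authors in the remark following \eqref{ass.main}.
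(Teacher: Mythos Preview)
Your proof is correct and follows essentially the same route as the paper: the $L^\infty$ bound is immediate, the gradient bound comes from differentiating \eqref{eq.HJBN}, using the linear growth of $D_xH$ in $p$ together with a maximum-principle/Gr\"onwall argument, and the time-derivative bound follows from the homogeneous linear equation for $\partial_t\mathcal V^N$ with bounded terminal datum. The only cosmetic difference is that in Step~2 the paper applies the linear maximum principle directly to $w^i=D_{x^i}\mathcal V^N$ (obtaining an integral inequality $\|w^i(t)\|_\infty\leq C/N+C\int_t^T\|w^i(s)\|_\infty\,ds$ and then Gr\"onwall), whereas you run a Bernstein argument on $|V_k|^2$; both lead to the same closure via the linear growth of $D_xH$.
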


\begin{rmk}\label{rmk.bddcontrol} The estimate on $D_{x^j}\mathcal V^N$ implies that the optimal feedback of the problem, given by $\alpha^k(t,x) = -D_pH(x^i, ND_{x^j}\mathcal V^N(t,\bx))$ remains uniformly bounded.
\end{rmk}

\begin{proof} The bound on $\mathcal V^N$ is obvious.

%\vs
We note that $w^i=D_{x^i}\mathcal V^N$ satisfies
\be\label{zaoelsfdjgnc}
\left\{\begin{array}{l}
\ds -\partial_t w^i(t,{\bf x}) -\sum_{k=1}^N \Delta_{x^k}w^i(t,{\bf x}) +\frac1N D_xH(x^i, ND_{x^{i}} \mathcal{V}^N(t,{\bf x}))\\
\ds  +\sum_{k=1}^N D_pH(x^k, N D_{x^k}\mathcal V^N(t, {\bf x}))\cdot D_{x^k}w^i(t,{\bf x})\\
\ds \qquad =\frac1ND_m\mathcal F(m^N_{{\bf x}},x^i)\ \ \  {\rm in}\ \  (0,T)\times (\R^d)^N, \\%[1.5mm]
\ds  w^i(T,{\bf x})= \frac1N D_m\mathcal G(m^N_{{\bf x}},x^i)\ \ {\rm in} \ \  (\R^d)^N,
\end{array}\right.
\ee
and observe that the maximum principle for linear parabolic equations (see e.g. Theorem 8.1.4 of \cite{Kry96}) together with the condition $|D_x H(x,p)| \leq C(1 + |p|)$ from \eqref{hyp.quad} gives 
\begin{align*}
|w^i(t,\bf{x})| &\leq \int_t^T \big( \frac{1}{N} \norm{D_x H(\cdot, N D_{x^i} \mathcal{V}(s,\cdot)^N)}_{\linf} + \frac{1}{N} \frac{\norm{D_m \sF}_{\linf}}{N} \big) ds + \frac{\norm{D_m G}_{\linf}}{N} \\
&\leq \frac{C}{N} + C \int_t^T \norm{w^i(s,\cdot)}_{\linf}ds
\end{align*}
Taking a supremum in $\bf{x}$ and then applying Gronwall's inequality gives 
$$|D_{x^i}\mathcal{V}^N(t,{\bf x})| \leq \frac{C}{N},$$ 
as required.

%ADD REF for max principle
%\vs
 Similarly $w^t= \partial_t \mathcal V^N$ satisfies
\be\label{zaoelsfdjgncTT}
\left\{\begin{array}{l}
\ds -\partial_t w^t(t,{\bf x}) -\sum_{k=1}^N \Delta_{x^k}w^t(t,{\bf x}) +\\
\ds \quad +\sum_{k=1}^N D_pH(x^k, N D_{x^k}\mathcal V^N(t, {\bf x}))\cdot D_{x^k}w^t(t,{\bf x})=0\ \  {\rm in}\ \ (0,T)\times (\R^d)^N, \\
\ds  w^t(T,{\bf x})= - \frac{1}{N}\sum_{k=1}^N {\rm tr} \left[D^2_{y,m}\mathcal G(m^N_{\bx}, x^k)+\frac{1}{N} [D^2_{m,m}\mathcal G(m^N_{\bx}, x^k,x^k)\right]\\
\ds \quad +\frac{1}{N} \sum_k H(x^k, D_m\mathcal G(m^N_{\bx}, x^k) -\mathcal F(m^N_{\bx}) \ \ {\rm in}\ \ (\R^d)^N,
\end{array}\right.
\ee
and the uniform bound on $\|\partial_t \mathcal V^N\|_\infty $ this time follows directly from the maximum principle.
\end{proof}

\begin{lem}\label{It'^o} Assume \eqref{ass.main}.  There is $C>0$ such that, for all
$t_0,s_0 \in [0,T]$ and  $m_0,\bar{m}_0 \in \sP_2(\R^d)$,
%\begin{align*}
  $  |\mathcal U(t_0,m_0) - \mathcal U(s_0,\bar{m}_0)| \leq C\Big(|t_0 - s_0|^{1/2} + {\bf d}_1(m,\bar{m})\Big).$
    %, \quad \forall \,\, t_0,s_0 \in [0,T], \,\, m_0,\bar{m}_0 \in \sP_2(\R^d).
%\end{align*}
Moreover,  if $(t_0,m_0)\in [0,T]\times \sP_1(\R^d)$ and $(m,\alpha)$ is optimal in the definition of $\mathcal U(t_0,m_0)$ in \eqref{def.U}, then
$
\|\alpha\|_\infty\leq C.
$
\end{lem}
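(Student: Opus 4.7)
The plan is to establish the three assertions in the order (i) the $L^\infty$-bound on any optimal control, (ii) the ${\bf d}_1$-Lipschitz estimate in $m$, and (iii) the $1/2$-H\"older estimate in $t$. The main tools are coupling via the stochastic representation of the Fokker-Planck equation, the dynamic programming principle of Proposition~\ref{pro.dppnocommon}, and the quadratic lower bound $L(x,a)\geq c|a|^2-C$ obtained from \eqref{hyp.quad} by Legendre duality.

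For (i), I would rely on the Pontryagin-type first-order condition: at the optimum $\alpha(t,x)=-D_pH(x,Du(t,x))$ for $m_t\otimes dt$-a.e.\ $(t,x)$, where the adjoint $u$ solves the backward HJ equation
\[
-\partial_t u-\Delta u+H(x,Du)=\frac{\delta\mathcal F}{\delta m}(m_t,x),\qquad u(T,\cdot)=\frac{\delta\mathcal G}{\delta m}(m_T,\cdot).
\]
The boundedness of $D_m\mathcal F$ and $D_m\mathcal G$ from \eqref{F}-\eqref{G} makes the right-hand side and the terminal data Lipschitz in $x$ uniformly in $m$; combined with the linear growth of $D_xH$ in $p$ in \eqref{hyp.quad}, the standard gradient estimate for HJ equations (the same argument that gave Lemma~\ref{lem.estiVN}) yields $\|Du\|_\infty\leq C$, hence $\|\alpha\|_\infty\leq C$ by local boundedness of $D_pH$. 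An alternative is to pass to the limit from the $N$-particle optimal feedbacks, which are uniformly bounded by Remark~\ref{rmk.bddcontrol}.

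For (ii), fix $m_0,\bar m_0\in\sP_2(\R^d)$, take the optimal (hence bounded, by (i)) control $\alpha$ for $\mathcal U(t_0,m_0)$, and mollify in $x$ so that $\alpha$ is Lipschitz in $x$ at a cost $o(1)$ in the action. Couple $(X_{t_0},\bar X_{t_0})$ optimally for ${\bf d}_1$ and drive both SDEs by the same Brownian motion under $\alpha$; Gronwall on $|X_t-\bar X_t|$ gives ${\bf d}_1(m_t,\bar m_t)\leq e^{CT}{\bf d}_1(m_0,\bar m_0)$. Using $\alpha$ as a competitor for $\mathcal U(t_0,\bar m_0)$, the cost difference is then controlled by the ${\bf d}_1$-Lipschitz continuity of $\mathcal F,\mathcal G$ (from \eqref{F}-\eqref{G}) and by the $x$-Lipschitz continuity of $L(\cdot,\alpha(t,\cdot))$ on the bounded range of $\alpha$; symmetrizing yields the claim.

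For (iii), assume $s_0>t_0$. The upper direction uses Proposition~\ref{pro.dppnocommon} with the zero control on $[t_0,s_0]$: since $L(\cdot,0)$ is bounded by \eqref{hyp.quad} and the resulting trajectory is the heat-kernel smoothing $G_{s_0-t_0}\star m_0$ with ${\bf d}_1(m_0,G_s\star m_0)\leq Cs^{1/2}$, one gets $\mathcal U(t_0,m_0)\leq \mathcal U(s_0,m_0)+C(s_0-t_0)^{1/2}$ after invoking (ii). For the reverse direction, run the optimal control on $[t_0,s_0]$: its $L^\infty$-bound yields ${\bf d}_1(m_0,m_{s_0})\leq C(s_0-t_0)^{1/2}$ via the SDE representation, and the DPP combined with (ii) closes the estimate. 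The main obstacle is (i): the $L^\infty$-bound on the optimal control is the only step that is not a direct coupling/DPP computation, and it requires either the Pontryagin analysis above or the limiting argument from the $N$-particle problem.
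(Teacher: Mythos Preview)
Your arguments for (i) and (iii) are correct and essentially coincide with the paper's: the Pontryagin system gives $\alpha(t,x)=-D_pH(x,Du(t,x))$ with $\|Du\|_\infty\leq C$ by the same gradient estimate as in Lemma~\ref{lem.estiVN}, and the time regularity follows from the DPP together with $\|\alpha\|_\infty\leq C$.

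There is, however, a genuine gap in your step~(ii). Mollifying the optimal feedback to make it Lipschitz does yield a competitor, but the Lipschitz constant of the mollified control $\alpha^\epsilon$ is of order $\|\alpha\|_\infty/\epsilon$ and blows up as the mollification parameter $\epsilon\to0$. Your Gronwall/coupling bound then reads ${\bf d}_1(m_t,\bar m_t)\leq e^{C_\epsilon T}{\bf d}_1(m_0,\bar m_0)$ with $C_\epsilon\to\infty$, so the comparison gives only
\[
\mathcal U(t_0,\bar m_0)\leq \mathcal U(t_0,m_0)+o_\epsilon(1)+C_\epsilon\,{\bf d}_1(m_0,\bar m_0),
\]
which yields continuity in $m$ but not a Lipschitz estimate with a constant depending only on the data.

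The paper avoids this by pushing the Pontryagin analysis one step further: once $\|Du\|_\infty\leq C$, the backward equation for $u$ is a semilinear parabolic equation with bounded coefficients and data, and standard parabolic regularity gives $\|D^2u\|_\infty\leq C$, hence $\|D\alpha\|_\infty=\|D[D_pH(\cdot,Du)]\|_\infty\leq C$. In other words, the \emph{optimal} feedback is already globally Lipschitz in $x$, with a constant depending only on $H,\mathcal F,\mathcal G,T$. No mollification is needed, and the coupling argument you describe then produces the uniform bound ${\bf d}_1(m_t,\bar m_t)\leq C\,{\bf d}_1(m_0,\bar m_0)$ directly. Inserting this extra regularity step in place of your mollification fixes the proof.
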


\begin{proof} The result is standard so we only sketch the argument and refer to \cite{BrCa} and \cite{daudin21} for more details. Fix $(t_0,\bar m_0)\in [0,T]\times \Pk$. It follows from  \eqref{ass.main} that there exists at least a pair $(m, \alpha)$ optimal in the definition of $\mathcal U(t_0,\bar m_0)$. Moreover, for such optimal pair $(m,\alpha)$, there exists a map $ u \in \mathcal{C}^{1,2}_b((t_0,T)\times \R^d)$ with $\alpha_t(x)= -D_pH(x,Du(t,x))$ and such that $(u,m)$ solves the system
$$
\left\{ \begin{array}{l}
\ds -\partial_tu(t,x) -\Delta u(t,x) +H(x,Du(t,x))=\frac{\delta \mathcal F}{\delta m}(m_t,x) \ \  {\rm in}\ \ (t_0,T)\times \R^d\\[1.2mm]
\ds \partial_tm_t(x) -\Delta m_t(x) -{\rm div}(D_pH(x,Du(t,x))m_t(s))=0 \ \  {\rm in}\ \  (t_0,T)\times \R^d,\\[1.2mm]
\ds m_{t_0}=\bar m_0, \; u(T,x)= \frac{\delta \mathcal G}{\delta m}(m_T, x) \ \  {\rm in}\ \ \R^d.
\end{array}\right.
$$
Arguing as for the Lipschitz estimate in Lemma \ref{lem.estiVN}, one can check that $\|Du\|_\infty\leq C$ for some  constant $C>0$ and, since $\alpha= -D_pH(x,Du)$,   $\|\alpha\|_\infty\leq C$. The standard parabolic regularity theory then implies that $\|D\alpha\|_\infty= \|D[D_pH(\cdot, Du(\cdot,\cdot))]\|_\infty \leq C$.
\vs
Fix  $\bar m_1\in \Pk$ and let $\mu$ be the solution to
$
\partial_t \mu-\Delta \mu +{\rm div}(\mu \alpha)=0  \ \  {\rm in}\ \  (t_0,T)\times \R^d \ \ \text{with} \ \  \mu(t_0)= \bar m_1.
$
It is easy to  check that there exists $C=C(\|D\alpha\|_\infty, T)$ such that
$
\sup_{t\in [t_0,T]} {\bf d}_1(\mu(t), m(t))\leq C{\bf d}_1(\bar m_1,\bar m_0).
$
%where $C$ depends on $\|D\alpha\|_\infty$ and on $T$.
Thus, for some  $C$ depending on $T$, on the regularity of $L$, $\mathcal F$ and $\mathcal G$ and  $\|D\alpha\|_\infty$,
\begin{align*}
\mathcal U(t_0, \bar m_1)&\,\,\, \leq \int_{t_0}^T (\int_{\R^d} L(x, \alpha_t(x))\mu(t,dx)+ \mathcal F(\mu(t)))dt+ \mathcal G(\mu(T))\\
& \,\,
 \leq\,\, \int_{t_0}^T\,\, [\int_{\R^d}\,\,\, L(x, \alpha_t(x))m(t,dx)+ \,\mathcal F(m(t))]dt +\, \mathcal G(m(T))\,+\, C\,\,\sup_{t\in [t_0,T]}\, {\bf d}_1(\mu(t), m(t))\\
& \,\,\leq\, \mathcal U(t_0,\bar m_0) +  C{\bf d}_1(\bar m_1,\bar m_0).
\end{align*}
This establishes the estimate
\begin{align} \label{spacereg}
    | \mathcal{U}(t_0,m_0) - \mathcal{U}(t_0,\bar{m}_0)| \leq C {\bf d}_1(m_0, \bar{m}_0).
\end{align}
Finally, we fix $s_0 < t_0$, and we choose $(m,\alpha)$ optimal in the definition of $\mathcal{U}(s_0,m_0)$. By the dynamic programming (Proposition \ref{pro.dppnocommon}), we have
%\begin{align*}
\newline
  $  \mathcal{U}(s_0,m_0) = \int_{s_0}^{t_0} \Big(\int_{\R^d} L(x, \alpha(t,x)) m_t(dx) + \sF(m_t) \Big) dt + \mathcal{U}(t_0,m_{t_0})$,
%\end{align*}
and, thus,
\vskip-.175in
\begin{align*}
&    |\mathcal{U}(s_0,m_0) - \mathcal{U}(t_0,m_0)| \leq
    |\int_{s_0}^{t_0}  \Big(\int_{\R^d} L(x, \alpha(t,x)) m_t(dx) + \sF(m_t) \Big) dt | \\
&    + |\mathcal{U}(t_0,m_{t_0}) - \mathcal{U}(t_0,m_0)|
    \leq C (t_0 - s_0) + C {\bf d_1}(m_{t_0}, m_0) \\
    & \qquad \leq C(t_0 - s_0) + C (t_0 - s_0)^{1/2},\\
\end{align*}
\vskip-.1in
where we have used \eqref{spacereg} and the boundedness of $\alpha$, together with the fact that \eqref{convex} implies a similar inequality for $L$. This completes the proof.
%where the constant $C$ depends on $T$, on the regularity of $L$, $\mathcal F$ and $\mathcal G$ and on $\|D\alpha\|_\infty$.
\end{proof}
The key estimate on $\mathcal V^N$ is discussed next.

\begin{lem}\label{lem.semiconcesti} Assume \eqref{ass.main}.  There exists an independent of $N$ constant $C$, such that, for any $N\geq 1$,
%$$
%\|D^2_{x^ix^j}\mathcal V^N\|_\infty\leq \frac{C}{N}\qquad \forall i,j\in \{1, \dots, N\}
%$$
%and, for any
$\xi=(\xi^i)\in (\R^d)^N$ and $\xi^0\in \R$,
%\qquad {\rm and}\qquad
\vskip-.2in
\be\label{estisemiconc}
\begin{split}
\sum_{i,j=1}^ND^2_{x^ix^j}\mathcal V^N(t,{\bf x})\xi^i\cdot \xi^j  + &2 \sum_{i=1}^N D^2_{x^it}\mathcal V^N(t,\bx)\cdot \xi^i\xi^0+ D^2_{tt}\mathcal V^N (t,\bx)(\xi^0)^2\\
&\leq \frac{C}{N}\sum_{i=1}^N |\xi^i|^2+ C(\xi^0)^2.
\end{split}
\ee
\end{lem}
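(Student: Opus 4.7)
The strategy is to apply a maximum principle to a linear parabolic inequality satisfied by the second directional derivative of $\mathcal{V}^N$. Introduce the first-order operator $L_0 := \xi^0 \partial_t + \sum_{i=1}^N \xi^i \cdot D_{x^i}$ (with constant coefficients), and set $v := L_0 \mathcal{V}^N$ and $w := L_0^2 \mathcal{V}^N$. Observe that the inequality \eqref{estisemiconc} is exactly the statement that $w \leq \frac{C}{N}\sum_k|\xi^k|^2 + C(\xi^0)^2$.

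\textbf{Step 1 (PDE for $w$).} Since $L_0$ commutes with $\partial_t$ and with $\Delta_{x^k}$, differentiating \eqref{eq.HJBN} once by $L_0$ (and using that $\mathcal F$ and $H$ do not depend on $t$) yields a linear parabolic equation for $v$ whose right-hand side is $g_1 := \frac{1}{N}\sum_i \xi^i \cdot D_m\mathcal F(m^N_{\bx},x^i) - \frac{1}{N}\sum_k D_xH(x^k,p^k)\cdot \xi^k$, where $p^k := ND_{x^k}\mathcal{V}^N$. Applying $L_0$ again and grouping terms produces
\begin{equation*}
-\partial_t w - \sum_k \Delta_{x^k}w + \sum_k D_pH(x^k,p^k)\cdot D_{x^k}w = L_0 g_1 - \sum_k \bigl[ D^2_{xp}H\,\xi^k + N D^2_{pp}H\, D_{x^k}v\bigr]\cdot D_{x^k}v.
\end{equation*}
By Lemma~\ref{lem.estiVN} the momenta $p^k$ stay in a fixed ball, so \eqref{convex} gives $D^2_{pp}H(x^k,p^k)\geq c_R I$, and \eqref{hyp.growth2} bounds $D^2_{xx}H$ and $D^2_{xp}H$. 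The crucial quadratic term $-N \sum_k D^2_{pp}H\, D_{x^k}v\cdot D_{x^k}v\leq -c_R N\sum_k |D_{x^k}v|^2$ has the good sign and, via Young's inequality, absorbs the cross term $2\sum_k D^2_{xp}H\,\xi^k \cdot D_{x^k}v$ up to a remainder of size $\frac{C}{N}\sum_k|\xi^k|^2$. The contribution of $L_0 g_1$ involves $D^2_{ym}\mathcal F$ and $\frac{1}{N^2}\sum_{i,j}D^2_{mm}\mathcal F\,\xi^i\cdot\xi^j$, which is controlled by $\frac{C}{N}\sum|\xi^i|^2$ via \eqref{F} and the bound $|\sum_i \xi^i|^2 \leq N\sum_i |\xi^i|^2$. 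Collecting these estimates yields
\begin{equation*}
-\partial_t w - \sum_k \Delta_{x^k}w + \sum_k D_pH(x^k,p^k)\cdot D_{x^k}w \leq \frac{C}{N}\sum_k |\xi^k|^2.
\end{equation*}

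\textbf{Step 2 (terminal bound).} From $\mathcal{V}^N(T,\bx)=\mathcal G(m^N_{\bx})$ one computes
\begin{equation*}
w(T,\bx) = \sum_{i,j}\xi^i\cdot D^2_{x^ix^j}\mathcal G(m^N_{\bx})\xi^j + 2\xi^0\sum_i \xi^i\cdot D^2_{x^it}\mathcal V^N(T,\bx) + (\xi^0)^2\partial^2_{tt}\mathcal V^N(T,\bx).
\end{equation*}
The purely spatial term is bounded by $\frac{C}{N}\sum_i|\xi^i|^2$ via the standard empirical-measure calculus and \eqref{G}. To bound the remaining terms I express the time derivatives at $t=T$ using the HJB equation \eqref{eq.HJBN}: $\partial_t\mathcal V^N(T,\bx)=G_1(\bx)$, where $G_1$ is built from $\mathcal G$, $H$, and $\mathcal F$; assumptions \eqref{G},\eqref{hyp.quad},\eqref{F} give $|D_{x^i}G_1|\le C/N$ and $|\Delta_{x^k}G_1|\le C/N$. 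Differentiating the HJB equation once in $t$ and substituting these bounds together with $|D_pH(x^k,p^k)|\le C$ (Lemma~\ref{lem.estiVN}) then yields $|\partial^2_{tt}\mathcal V^N(T,\bx)|\le C$. Young's inequality applied to $2|\xi^0|\sum_i|\xi^i|\cdot C/N\leq 2|\xi^0|\frac{C}{\sqrt N}(\sum_i|\xi^i|^2)^{1/2}$ bounds the mixed term by $(\xi^0)^2+\frac{C}{N}\sum_i|\xi^i|^2$, giving
\begin{equation*}
w(T,\bx) \leq \frac{C}{N}\sum_i|\xi^i|^2 + C(\xi^0)^2.
\end{equation*}

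\textbf{Step 3 (comparison).} The function $\bar w(t):= \frac{C'(T-t+1)}{N}\sum_i|\xi^i|^2 + C'(\xi^0)^2$, which is constant in $\bx$, satisfies $-\partial_t \bar w = \frac{C'}{N}\sum|\xi^i|^2$ and dominates $w(T,\cdot)$ for $C'$ large enough, hence is a supersolution of the inequality from Step~1. Since the drift $D_pH(x^k,p^k)$ is uniformly bounded by Lemma~\ref{lem.estiVN}, Krylov's maximum principle \cite{Kry96} (Theorem 8.1.4) yields $w\le \bar w$ on $[0,T]\times(\R^d)^N$, which is exactly \eqref{estisemiconc}.

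\textbf{Main obstacle.} The delicate point is Step~2: the quantity $\partial^2_{tt}\mathcal V^N(T,\bx)$ is only accessible through a double application of the HJB equation, and hence through the second spatial derivatives of $G_1$, i.e.\ through fourth spatial derivatives of $\mathcal G(m^N_{\bx})$. This is precisely where the $C^4$ assumption on $\mathcal G$ in \eqref{G} is used, and is the sole reason one needs more than two derivatives of $\mathcal G$ in the assumptions. Everything else reduces to careful bookkeeping of the $1/N$ factors arising when derivatives fall on the empirical measure, and to the convexity \eqref{convex} of $H$ in $p$ in the quadratic-in-$D_{x^k}v$ term of the PDE for $w$.
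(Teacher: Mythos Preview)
Your proof is correct and follows essentially the same route as the paper's: the paper also computes the linear parabolic equation for the second directional derivative (denoted $\tilde\omega$, which is your $w=L_0^2\mathcal V^N$, with $\sigma_k=D_{x^k}v$), uses the strict convexity \eqref{convex} together with Young's inequality to bound the right-hand side by $\frac{C}{N}\sum|\xi^k|^2$, and bounds the terminal value via the HJB equation at $t=T$ exactly as you do (including the identification of the need for $\mathcal G\in C^4$). The only cosmetic difference is in Step~3: the paper integrates $\tilde\omega$ against the fundamental solution of the adjoint (linearized Fokker--Planck) equation, whereas you build an explicit supersolution $\bar w$ and invoke the comparison principle---these are equivalent implementations of the same maximum principle.
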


%The first inequality won't be very useful in the sequel. Note that it is not strong enough to ensure that $\mathcal U$ is of class $C^2$ because for this an inequality of the form
%$$
%\|D^2_{x^ix^j}\mathcal V^N\|_\infty\leq \frac{C}{N^2}+ \frac{C}{N}{\bf 1}_{i= j}   \qquad \forall i,j\in \{1, \dots, N\}
%$$
%would be necessary. This is expected since we know that $\mathcal U$ is at most Lipschitz continuous in general, and not even $C^1$.
\begin{rmk} Inequality \eqref{estisemiconc} plays a crucial role in the proof of Lemma \ref{lem.semiconcTD} below.  Since $\mathcal V^N$ converges to $\mathcal U$, it follows  that \eqref{estisemiconc} implies the semi-concavity of the extension $\tilde{\mathcal U}:[0,T]\times L^2((\tilde{\Omega}, \tilde{\mathcal{F}},\tilde{\mathbb{P}}); \R^d)\to \R$ defined,  for $X\in L^2(\tilde{\Omega}, \R^d),$ by
$
\tilde{\mathcal U}(t,X):= \mathcal U(t,\mathcal L(X)), %\qquad \forall X\in L^2(\Omega, \R^d),
$
where $(\tilde{\Omega}, \tilde{\F}, \tilde{\bP})$ is a fixed atomless probability space and $\mathcal L(X)$ is the law of the random variable $X$.
\end{rmk}
\vskip-.2in

\begin{proof}
% Let $\omega^{i}= D_{x^{i}}\mathcal{V}^N\cdot\xi^{i}$ for $1\leq i \leq N$, $\omega^0= \partial_t\mathcal{V}^N\xi_0$, $\omega^{i,j}:= D^2_{x^{i}x^j}\mathcal{V}^N\xi^{i}\cdot\xi^{j}$ for $1\leq i,j \leq N$, $\omega^{0,i}=\omega^{i,0}= \partial_t D_{x^{i}}\mathcal{V}^N\cdot\xi^0\xi^{i}$, $\omega^{0,0}=\partial_{tt}\mathcal{V}^N(\xi^0)^2$, $\ds \tilde{\omega}= \sum_{i,j=0}^N\omega^{i,j}$ and, for all $1\leq k \leq N$, $ \sigma_k = \sum_{i=0}^ND_{x^k}\omega^{i}$.
%\vs
For $1\leq i,j,k\leq N$, let
\vskip-.2in
\begin{equation*}
%\begin{split}
\begin{array}{ll}
&
\omega^{i}= D_{x^{i}}\mathcal{V}^N\cdot\xi^{i},  \ \   \omega^{i,j}= D^2_{x^{i}x^j}\mathcal{V}^N\xi^{i}\cdot\xi^{j}, \ \  \omega^0= \partial_t\mathcal{V}^N\xi_0, \ \ \omega^{0,0}=\partial_{tt}\mathcal{V}^N(\xi^0)^2 \\
& \omega^{0,i}=\omega^{i,0}= \partial_t D_{x^{i}}\mathcal{V}^N\cdot\xi^0\xi^{i} %\\[1.2mm]
\ \  \tilde{\omega}= \sum_{i,j=0}^N\omega^{i,j} \ \ \text{and} \ \  \sigma_k = \sum_{i=0}^ND_{x^k}\omega^{i}.
%\sigma_k = \sum_{i=0}^ND_{x^k}\omega^{i} \ \ \text{for} \ \ 1\leq k \leq N.
%\end{split}
\end{array}
\end{equation*}

%\vs

A straightforward computation gives
%\begin{align*}
\begin{align*}
    & -\partial_t \tilde{\omega} - \sum_{k=1}^N \Delta_{x^k}\tilde{\omega} + \sum_{k=1}^N D_{x^k}\tilde{\omega}.D_pH(x^k, ND_{x^k}\mathcal{V}^N(t,{\bf x})) \\
    & = -N\,\sum_{k=1}^N\, D^2_{pp}H(x^k, ND_{x^k}\mathcal{V}^N(t,{\bf x}))\sigma_k\cdot \sigma_k -2 \,\sum_{k=1}^N\, D^2_{xp}H (x^k, ND_{x^k}\mathcal{V}^N(t,{\bf x}))\xi^k.\sigma_k \\
%    &-2 \sum_{k=1}^ND^2_{xp}H (x^k, ND_{x^k}\mathcal{V}^N(t,{\bf x}))\xi^k.\sigma_k \\
    & \,\,  -\frac{1}{N}\, \sum_{i=1}^N\, D^2_{xx}H(x^{i},nD_{x^{i}}\mathcal{V}^N(t,{\bf x}))\xi^{i}.\xi^{i}
     \\
%    &- \frac{1}{N^3} \sum_{i,j,k=1}^ND^2_{mm}H(x^k,ND_{x^k}\mathcal{V}^N(t, {\bf x}),m^N_{\bf x},x^{i},x^j)\xi^{i}.\xi^j -\frac{1}{N^2}\sum_{i,k=1}^ND_yD_mH(x^k,ND_{x^k}\mathcal{V}^N(t, {\bf x}),m^N_{\bf x},x^{i})\xi^{i}.\xi^{i} \\
    & +\frac{1}{N^2} \, \sum_{i,j=1}^N \, D^2_{mm}\mathcal{F}(m_{\bf x}^N,x^{i},x^j)\xi^{i}.\xi^j + \frac{1}{N}\, \sum_{i=1}^N\,  D_yD_m\mathcal{F}(m_{\bf x}^N,x^{i})\xi^{i}.\xi^{i}
\end{align*}
%\end{align*}
%\begin{align*}
%    & -\partial_t \tilde{\omega} - \sum_{k=1}^N \Delta_{x^k}\tilde{\omega} + \sum_{k=1}^N D_{x^k}\tilde{\omega}.D_pH(x^k, ND_{x^k}\mathcal{V}^N(t,{\bf x}),m_{\bf x}^N) \\
%    =& -N\sum_{k=1}^N D^2_{pp}H(x^k, ND_{x^k}\mathcal{V}^N(t,{\bf x}),m_{\bf x}^N)\sigma_k\cdot \sigma_k \\
%    &-2 \sum_{k=1}^ND^2_{xp}H (x^k, ND_{x^k}\mathcal{V}^N(t,{\bf x}),m_{\bf x}^N)\xi^k.\sigma_k -\frac{2}{N}\sum_{k=1}^N\sum_{j=1}^N D^2_{mp}H(x^k, ND_{x^k}\mathcal{V}^N(t,{\bf x}),m_{\bf x}^N,x^j)\xi^j.\sigma_k \\
%    &-\frac{1}{N}\sum_{i=1}^ND^2_{xx}H(x^{i},nD_{x^{i}}\mathcal{V}^N(t,{\bf x}),m^N_{\bf x})\xi^{i}.\xi^{i} - \frac{2}{N^2} \sum_{i,j=1}^ND^2_{mx}H(x^{i},ND_{x^{i}}\mathcal{V}^N(t,{\bf x}),m_{\bf x}^N,x^j)\xi^{i}\xi^j \\
%    &- \frac{1}{N^3} \sum_{i,j,k=1}^ND^2_{mm}H(x^k,ND_{x^k}\mathcal{V}^N(t, {\bf x}),m^N_{\bf x},x^{i},x^j)\xi^{i}.\xi^j -\frac{1}{N^2}\sum_{i,k=1}^ND_yD_mH(x^k,ND_{x^k}\mathcal{V}^N(t, {\bf x}),m^N_{\bf x},x^{i})\xi^{i}.\xi^{i} \\
%    &{\color{red} +\frac{1}{N^2} \sum_{i,j=1}^ND^2_{mm}\mathcal{F}(m_{\bf x}^N,x^{i},x^j)\xi^{i}.\xi^j + \frac{1}{N}\sum_{i=1}^N D_yD_m\mathcal{F}(m_{\bf x}^N,x^{i})\xi^{i}.\xi^{i}}
%\end{align*}

Denote by $\gamma$ the right-hand-side of the equality above. Recalling that $H$ is strictly convex in the $p$ variable and that $N\partial_{x^k}\mathcal V^N$ is bounded, we have, for all $1\leq k \leq N$,
%\begin{align*}
   % & 
   $-N D^2_{pp}H \sigma_k \cdot \sigma_k -2 D^2_{xp}H \xi^k.\sigma_k  \leq \frac{C}{N} |\xi^k|^2.$
%     \\
%    &= \left| \sqrt{N} D^2_{pp}H^{\frac{1}{2}} \sigma_k + \frac{1}{\sqrt{n}}D^2_{pp}H^{-\frac{1}{2}}\left[ D^2_{xp}H\xi^k + \frac{1}{N} \sum_{j=1}^ND^2_{mp}H\xi^j \right] \right|^2 - \frac{1}{N}\left| D^2_{pp}H^{-\frac{1}{2}} \left[ D^2_{xp}H\xi^k + \frac{1}{N} \sum_{j=1}^ND^2_{mp}H\xi^j \right] \right|^2
%\end{align*}
We can use again the Lipschitz bounds on $\mathcal{V}^N$ and \eqref{hyp.growth2}
%the uniform convexity of $H$ in the $p$ variable
to deduce that
$ \gamma(t, {\bf x}) \leq \frac{C}{N} \sum_{k=1}^N |\xi^{i}|^2. $
Next,  fix $(t_0,{\bf x}_0)$ and consider the weak solution $m^N$ to
\hskip-.2in
\begin{equation*}
\begin{split}
& \partial_t m^N(t,{\bf x}) -\sum_{k=1}^N \Delta_{x^k}m^N(t,{\bf x}) \\
&  -\sum_{k=1}^N {\rm div}(D_pH(x^k, N D_{x^k}\mathcal V^N(t, {\bf x}))m^N) = 0 \ \
  {\rm in} \ \ (t_0,T)\times (\R^d)^N, \\ %[1.5mm]
&  m^N(t_0,\cdot)= \delta_{\bx_0} \ \ {\rm in} \ \ (\R^d)^N.
\end{split}
\end{equation*}

%\begin{equation*}
%\begin{array}{ll}
%\displaystyle \partial_t m^N(t,{\bf x}) -\sum_{k=1}^N \Delta_{x^k}m^N(t,{\bf x}) \\
%\displaystyle \hspace{60pt} -\sum_{k=1}^N {\rm div}(D_pH(x^k, N D_{x^k}\mathcal V^N(t, {\bf x}))m^N) = 0 & \mbox{in } (t_0,T)\times (\R^d)^N, \\
% m^N(t_0,\cdot)= \delta_{\bx_0}, & \mbox{in } (\R^d)^N
%\end{array}
%\end{equation*}

Integrating the $\tilde \omega-$equation  against $m^N$, we find that, for all $(t_0, {\bf x}_0) \in [0,T] \times (\R^d)^N$,
$
 \tilde{\omega}(t_0, {\bf x_0}) \leq \sup_{ {\bf x}} \|\tilde{\omega}(T, {\bf x}) \|_{\infty} + \frac{C}{N}\sum_{k=1}^N |\xi^k|^2 .
 $
 In order to bound the right-hand side of the inequality above,  we first  note that, by the equation satisfied by $\mathcal V^N$, we have
 \newline
 $
 \partial_t\mathcal V^N(T,\bx)= -\sum_{k=1}^N \Delta_{x^k} \mathcal G^N(\bx) +\frac{1}{N} \sum_{k=1}^N H(x^k, ND_{x^k} \mathcal G^N(\bx)) -\mathcal F^N(\bx),
 $
 where 
 \newline
 $\mathcal F^N(\bx):= \mathcal F(m^N_{\bx})$ and $\mathcal G^N(\bx):= \mathcal G(m^N_{\bx})$, and, similarly, 
 \newline
 $ \partial^2_{tt}\mathcal V^N(T,\bx)= -\,\,\sum_{k=1}^N \,\Delta_{x^k}  \partial_t\mathcal V^N(T,\bx) +\sum_{k=1}^N D_pH(x^k, ND_{x^k} \mathcal G^N(\bx))\cdot D_{x^k}  \partial_t\mathcal V^N(T,\bx).
 $
Recalling the expressions of the derivatives of $\mathcal F^N$ and $\mathcal G^N$ in function of the derivatives of $\mathcal F$ and $\mathcal G$ in Proposition 5.35 of \cite{CaDeBook}, we find, after a tedious but straightforward computation that, under our standing assumptions on $\mathcal F$ and $\mathcal G$, for some $C$,
$\sup_{ {\bf x}} \|\tilde{\omega}(T, {\bf x}) \|_{\infty} \leq \frac{C}{N}\sum_{i=1}^N |\xi^i|^2+ C(\xi^0)^2.$
\end{proof}

\subsection{The easy estimate} \label{subsec.easy}
The second step in the proof of Theorem \ref{thm.main1} is an upper bound of $\mathcal V^N$ in terms of $\mathcal U$. Our strategy will be to first compare $\mathcal U$ to $\hat{\mathcal{V}}^N$, where
\begin{align} \label{lifteddef}
\hat{\mathcal V}^N(t,m):= \int_{(\R^d)^N} \mathcal V^N(t, {\bf x}) \prod_{j=1}^N m(dx^j).
\end{align}
We start with a Lemma, whose proof is a straightforward computation which is essentially the same as the one carried out in the proof of Proposition 3.1 in Cardaliaguet and Masoero \cite{CaMa20}. Hence, we omit the details.

\begin{lem} \label{lem.subsol}
Let $\hat{\mathcal{V}}^N$ be given by \eqref{lifteddef}. Then $\hat{\mathcal V}^N$ is smooth and satisfies the inequality
$$
\left\{\begin{array}{l}
\ds -\partial_t  \hat{\mathcal V}^N(t,m)-\int_{\R^d}{\rm div}( D_m \hat{\mathcal V}^N(t,m,y))m(dy)\\
\ds  +\int_{\R^d} H(y, D_m\hat{\mathcal V}^N (t,m,y))m(dy)\leq \hat{\mathcal F}^N(m) \ \
{\rm in}\ \  (0,T)\times \Pk,\\%[2mm]
\ds \hat{\mathcal V}^N(T,m)= \hat{\mathcal G}^N(m)\ \  {\rm in} \ \  \Pk,
\end{array}\right.
$$
where
$
\hat{\mathcal F}^N(m)= \,\int_{(\R^d)^N}\, \mathcal F( m^N_{{\bf x}}) \,\prod_{j=1}^N \, m(dx^j)$
% \  \text{and} \
and
 $\hat{\mathcal G}^N(m)= \,\int_{(\R^d)^N}\, \mathcal G( m^N_{{\bf x}}) \prod_{j=1}^N m(dx^j).
$
\end{lem}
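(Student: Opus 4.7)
The plan is to differentiate $\hat{\mathcal{V}}^N$ using the chain rule for the linear functional and $L$-derivatives, substitute the equation \eqref{eq.HJBN} satisfied by $\mathcal{V}^N$, and then use the convexity of $H$ in the momentum variable together with the symmetry of $\mathcal{V}^N$ in its spatial arguments to obtain the stated inequality.

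First, I would record the basic formulas for the derivatives. Since the integrand is symmetric in $(x^1,\dots,x^N)$, the linear functional derivative is
\[
\tfrac{\delta \hat{\mathcal{V}}^N}{\delta m}(t,m,y) \;=\; N\int_{(\R^d)^{N-1}} \mathcal{V}^N(t,y,x^2,\dots,x^N)\prod_{j=2}^N m(dx^j) + c(t,m),
\]
where $c(t,m)$ is the normalization constant making the integral against $m$ vanish. Taking one spatial derivative in $y$ yields
\[
D_m \hat{\mathcal{V}}^N(t,m,y) = N\int_{(\R^d)^{N-1}} D_{x^1}\mathcal{V}^N(t,y,x^2,\dots,x^N) \prod_{j=2}^N m(dx^j),
\]
and a second derivative gives the analogous formula for $\mathrm{div}_y D_m \hat{\mathcal{V}}^N$ in terms of $\Delta_{x^1}\mathcal{V}^N$. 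Integrating against $m(dy)$ and invoking symmetry of $\mathcal{V}^N$ in its variables, one rewrites
\[
\int_{\R^d} \mathrm{div}_y D_m\hat{\mathcal{V}}^N(t,m,y)\,m(dy) = \sum_{k=1}^N \int_{(\R^d)^N} \Delta_{x^k}\mathcal{V}^N(t,\bx)\prod_{j=1}^N m(dx^j),
\]
and of course $\partial_t \hat{\mathcal{V}}^N(t,m) = \int \partial_t \mathcal{V}^N(t,\bx) \prod_j m(dx^j)$.

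The key step is the Hamiltonian term. Writing $D_m \hat{\mathcal{V}}^N(t,m,y)$ as a conditional expectation of $N D_{x^1} \mathcal{V}^N(t,y,x^2,\dots,x^N)$ under $\otimes^{N-1} m$ and applying Jensen's inequality (recall that $H$ is convex in $p$ by \eqref{convex}, being the Legendre transform of the convex $L$), gives
\[
\int_{\R^d} H(y, D_m\hat{\mathcal{V}}^N(t,m,y))\,m(dy) \;\leq\; \int_{(\R^d)^N} H\bigl(x^1, N D_{x^1}\mathcal{V}^N(t,\bx)\bigr)\prod_{j=1}^N m(dx^j),
\]
and by symmetry this equals $\tfrac{1}{N}\sum_{k=1}^N \int H(x^k, N D_{x^k}\mathcal{V}^N(t,\bx)) \prod_j m(dx^j)$. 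Plugging the $\mathcal{V}^N$-equation \eqref{eq.HJBN} (with $a_0 = 0$, since this is the first common-noise-free step, though the computation extends verbatim) into the resulting combination then yields $\hat{\mathcal{F}}^N(m)$ on the right-hand side, as desired. The terminal condition $\hat{\mathcal{V}}^N(T,m) = \hat{\mathcal{G}}^N(m)$ is immediate from the definition \eqref{lifteddef} and the terminal condition for $\mathcal{V}^N$.

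The only subtle point is the Jensen step, which is the source of the inequality (rather than equality) in the statement; every other operation is an exact identity using symmetry. Since the calculation is a routine variant of Proposition 3.1 of \cite{CaMa20}, I would simply sketch the three ingredients above (derivative formulas, symmetry, Jensen) and refer the reader to that paper for the bookkeeping.
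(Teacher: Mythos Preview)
Your proposal is correct and takes essentially the same approach as the paper, which simply states that the computation is the same as in the proof of Proposition~3.1 of \cite{CaMa20} and omits the details. Your sketch---derivative formulas for $\hat{\mathcal{V}}^N$ via symmetry, Jensen's inequality for the convex Hamiltonian term, and substitution of equation~\eqref{eq.HJBN}---is exactly the content of that omitted computation.
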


Next we prove the easier inequality in Theorem \eqref{thm.main1}.

\begin{prop}\label{lem.ineqeasy}
There exist constants $C$ depending on the data and $\beta$ depending only on $d$ such that, for all $(t,\bx_0)\in  [0,T]\times (\R^d)^N$,
\be\label{ineq1}
{\mathcal V}^N(t,m^N_{\bx_0})\leq \mathcal U(t,m^N_{\bx_0})+\dfrac{C}{N^\beta} (1+M_2^{1/2}(m^N_{\bx_0})).
\ee
\end{prop}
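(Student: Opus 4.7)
The plan is to compare $\mathcal V^N$ with $\mathcal U$ by going through the lifted function $\hat{\mathcal V}^N$ of \eqref{lifteddef}, exploiting the fact that, by Lemma~\ref{lem.subsol}, $\hat{\mathcal V}^N$ is a (classical) subsolution of the infinite-dimensional Hamilton--Jacobi equation satisfied by $\mathcal U$, up to the error terms $\hat{\mathcal F}^N-\mathcal F$ and $\hat{\mathcal G}^N-\mathcal G$.

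\textbf{Step 1: Compare $\hat{\mathcal V}^N$ with $\mathcal U$.} Fix $(t,\bx_0)$, set $m=m^N_{\bx_0}$, and pick a pair $(\bar m,\bar\alpha)$ optimal in \eqref{def.uwocommon} for $\mathcal U(t,m)$; by Lemma~\ref{It'^o}, $\|\bar\alpha\|_\infty\le C$. I would apply the chain rule to $s\mapsto \hat{\mathcal V}^N(s,\bar m_s)$ using the Fokker--Planck equation for $\bar m$, integrate by parts, and combine with Lemma~\ref{lem.subsol} to obtain
\[
\frac{d}{ds}\hat{\mathcal V}^N(s,\bar m_s)\ge \int_{\R^d}\bigl[H(y,D_m\hat{\mathcal V}^N)+D_m\hat{\mathcal V}^N\cdot\bar\alpha_s(y)\bigr]\bar m_s(dy)-\hat{\mathcal F}^N(\bar m_s).
\]
The Fenchel inequality $H(y,p)+p\cdot a\ge -L(y,a)$ and integration from $t$ to $T$ (using $\hat{\mathcal V}^N(T,\cdot)=\hat{\mathcal G}^N$) then give
\[
\hat{\mathcal V}^N(t,m)-\mathcal U(t,m)\le \bigl(\hat{\mathcal G}^N-\mathcal G\bigr)(\bar m_T)+\int_t^T\bigl(\hat{\mathcal F}^N-\mathcal F\bigr)(\bar m_s)\,ds.
\]

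\textbf{Step 2: Control the error terms via a Fournier--Guillin bound.} Writing $\hat{\mathcal F}^N(\mu)-\mathcal F(\mu)=\E\bigl[\mathcal F(m^N_{\mathbf Y})-\mathcal F(\mu)\bigr]$ with $\mathbf Y=(Y_1,\dots,Y_N)$ i.i.d.\ of law $\mu$, the uniform Lipschitz bound on $\mathcal F$ in ${\bf d}_1$ coming from \eqref{F} and the Fournier--Guillin quantitative law of large numbers yield
\[
|\hat{\mathcal F}^N(\mu)-\mathcal F(\mu)|\le C\,\E\bigl[{\bf d}_1(m^N_{\mathbf Y},\mu)\bigr]\le C\,N^{-\beta}\bigl(1+M_2^{1/2}(\mu)\bigr),
\]
for some $\beta=\beta(d)\in(0,1]$, and similarly for $\mathcal G$. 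A short Gronwall argument on $\frac{d}{ds}M_2(\bar m_s)=2d+2\int x\cdot\bar\alpha_s(x)\bar m_s(dx)$, using $\|\bar\alpha\|_\infty\le C$, shows that $M_2(\bar m_s)\le C(1+M_2(m))$ for all $s\in[t,T]$. Plugging into Step~1,
$$\hat{\mathcal V}^N(t,m^N_{\bx_0})\le \mathcal U(t,m^N_{\bx_0})+CN^{-\beta}\bigl(1+M_2^{1/2}(m^N_{\bx_0})\bigr).$$

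\textbf{Step 3: Pass from $\hat{\mathcal V}^N$ to $\mathcal V^N$.} It remains to bound $\mathcal V^N(t,\bx_0)-\hat{\mathcal V}^N(t,m^N_{\bx_0})$. By Lemma~\ref{lem.estiVN} one has $\|D_{x^j}\mathcal V^N\|_\infty\le C/N$; combining this with the symmetry of $\mathcal V^N$ in its spatial arguments (a consequence of the uniqueness for \eqref{eq.HJBN} and the symmetry of the data) and optimising over permutations gives
\[
|\mathcal V^N(t,\by)-\mathcal V^N(t,\bx_0)|\le C\,{\bf d}_1(m^N_{\by},m^N_{\bx_0}).
\]
Writing $\hat{\mathcal V}^N(t,m^N_{\bx_0})=\E[\mathcal V^N(t,\mathbf Y)]$ for $\mathbf Y$ i.i.d.\ of law $m^N_{\bx_0}$ and applying Fournier--Guillin a second time, $\E[{\bf d}_1(m^N_{\mathbf Y},m^N_{\bx_0})]\le CN^{-\beta}(1+M_2^{1/2}(m^N_{\bx_0}))$, and the conclusion~\eqref{ineq1} follows by the triangle inequality.

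The only non-routine step is the verification of the integration-by-parts / chain-rule computation in Step~1; the main obstacle, though quite mild here, is tracking the $M_2$ dependence consistently (both for the empirical-measure concentration constant and along the trajectory $\bar m_s$), which forces us to use that $\bar\alpha$ is bounded rather than merely $L^2$.
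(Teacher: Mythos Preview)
Your proof is correct and follows essentially the same route as the paper: both use the lifted function $\hat{\mathcal V}^N$, exploit Lemma~\ref{lem.subsol} together with the optimal control for $\mathcal U$ (bounded via Lemma~\ref{It'^o}) to get $\hat{\mathcal V}^N\le\mathcal U+$ error, control the error terms $\hat{\mathcal F}^N-\mathcal F$, $\hat{\mathcal G}^N-\mathcal G$ by Fournier--Guillin plus the second-moment bound along the optimal trajectory, and finally close the gap between $\mathcal V^N$ and $\hat{\mathcal V}^N$ by the Lipschitz estimate of Lemma~\ref{lem.estiVN} and Fournier--Guillin once more. Your chain-rule computation in Step~1 is the deterministic version of the It\^o formula the paper cites from \cite{CaDeBook}, and your explicit use of symmetry in Step~3 simply spells out what the paper means by ``the Lipschitz estimate on $\mathcal V^N$ and the same argument as above.''
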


\begin{proof} Theorem 1 in \cite{FoGu} gives constants $C$ and $\beta$ depending only on $d$ such that, for any $m\in \sP_2(\R^d)$ and for all $N \in \N$,
$
\int_{(\R^d)^N} {\bf d}_1(m^N_{{\bf x}},m) \prod_{i=1}^N m(dx^i)\leq \dfrac{C}{N^\beta} M_2^{1/2}(m).
$
%where $C$ depends on dimension only (and on $\ep>0$ if $d=4$).
\vs
Fix  $(t_0,m_0)\in [0,T)\times \mathcal P_2(\R^d)$ and let $\alpha^*$ be optimal in the definition of $\mathcal U(t_0,m_0)$.
Using Lemma \eqref{lem.subsol} together with a standard verification argument, for example,  using It\^o's formula in Theorem 5.99 of \cite{CaDeBook}, we see that
$$
\hat{\mathcal{V}}^N(t_0,m_0) \leq  \inf_{\alpha \in \sA(t_0,m_0)} \Big\{ \int_{t_0}^T  \big( \int_{\R^d} L(x, \alpha(t,x)) m_t(dx) + \hat{\mathcal F}^N(m_t) \Big) dt + \hat{\mathcal G}^N(m_T) \Big\}
$$
and, hence,
\be \label{vhatest}
\hat{\mathcal{V}}^N(t_0,m_0) \leq \int_{t_0}^T \big(  \int_{\R^d}L(x, \alpha^*(t,x)) + \hat{\mathcal F}^N(m_t) \Big) dt + \hat{\mathcal G}^N(m_T).
\ee
Since, in view of  Lemma~\ref{It'^o}, $\alpha^*$ is uniformly bounded by a constant independent of $N$,
% (Lemma \ref{It'^o}),
 an easy computation shows that the corresponding state process satisfies
 \newline
$
\sup_{t\in [t_0,T]}\int_{\R^d} |x|^2 m(t,dx) \leq (1+CT)\int_{\R^d} |x|^2m_0(dx) +CT.
$
It then follows from  the Lipschitz continuity of $\mathcal F$  with respect to ${\bf d_1}$  that

\begin{align*}
\left| \hat{\mathcal{F}}^N(m(t)) - \mathcal{F}(m(t)) \right| & \leq C \int_{{(\R^d)}^N}{\bf d}_1(m^N_{{\bf x}},m(t))  \prod_{j=1}^N m(t,dx^j) \\
& \leq  \dfrac{C}{N^{\beta}}(1+M_2^{1/2}(m_0))
\end{align*}
and, similarly $\displaystyle \left| \hat{\mathcal{G}}^N(m(T)) - \mathcal{G}(m(T)) \right| \leq  \dfrac{C}{N^{\beta}}(1+M_2^{1/2}(m_0)) $.

%\be\notag
%\begin{split}
%& \hat{\mathcal F}^N(m(t))\leq  {\mathcal F}(m(t))\\&+ C \int_{{(\R^d)}^N}{\bf d}_1(m^N_{{\bf x}},m(t))  \prod_{j=1}^N m(t,dx^j) \leq {\mathcal F}(m(t))+
%  \dfrac{C}{N^{\beta}}(1+M_2^{1/2}(m_0)),
%\end{split}
%\ee
%and, similarly,
%$
% \hat{\mathcal G}^N(m(T))\leq {\mathcal G}(m(t))+
%   \dfrac{C}{N^{\beta}}(1+M^{1/2}_2(m_0)).
%$
Using the optimality of $\alpha^*$, \eqref{vhatest} and the estimates above %of $\hat{\sF}^N$ and $\hat{\sG}$ to obtain
\begin{align*}
\hat{\mathcal V}^N(t_0,m_0)& \leq  \E[\int_{t_0}^T \big( L(X_t, \alpha^*_t) + {\mathcal F}(\sL(X_t)) \Big) dt + {\mathcal G}(\sL(X_T))]+    \dfrac{C}{N^{\beta}}(1+M^{1/2}_2(m_0))\\
&\leq \mathcal U(t_0,m_0) + \dfrac{C}{N^{\beta}}(1+M_2^{1/2}(m_0)).
\end{align*}
Fix now $\bx_0\in (\R^d)^N$. Then the Lipschitz estimate on $\mathcal V^N$ and the same argument as above yield
$
\abs{\mathcal{V}^N(t_0,\bx_0) - \hat{\mathcal{V}}^N(t_0,m^N_{\bx_0})} \leq  \dfrac{C}{N^{\beta}}(1+M^{1/2}_2(m^N_{\bx_0})).
$
Putting together the last two estimates gives \eqref{ineq1}.
\end{proof}

\subsection{The main estimate} \label{subsec.main}

The aim of this section is to prove the opposite inequality.

\begin{prop}\label{prop.hardineq} Assume \eqref{ass.main}. There exists  $\beta\in (0,1]$ depending  only on the  dimension and  $C>0$ depending  on the data,  such that, for any $N\geq1$ and any $(t,\bx)\in [0,T]\times (\R^d)^N$,
\be \label{harderineq}
\mathcal U(t,m^N_{{\bf x}}) -{\mathcal V}^N(t,{\bf x}) \leq  \dfrac{C}{N^{\beta}}(1+ \frac{1}{N} \sum_{i=1}^N |x^i|^2).
\ee
\end{prop}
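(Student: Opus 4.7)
The plan is to bound $\mathcal U(t_0,m^N_\bx)$ from above by constructing an admissible pair $(m,\bar\alpha)$ for the limit problem \eqref{def.uwocommon} whose cost is close to $\mathcal V^N(t_0,\bx)$; combined with Proposition \ref{pro.dppnocommon} this yields \eqref{harderineq}. To set up, let $\alpha^{*,i}(t,\bx):=-D_pH(x^i,ND_{x^i}\mathcal V^N(t,\bx))$ and let $\bX=(X^1,\dots,X^N)$ solve the closed-loop SDE $dX^i_t=\alpha^{*,i}(t,\bX_t)\,dt+\sqrt{2}\,dB^i_t$ with $X^i_{t_0}=x^i$. By Lemma \ref{lem.estiVN} the $\alpha^{*,i}$ are uniformly bounded in $i$ and $N$, and by It\^o's formula applied to $\mathcal V^N$ together with \eqref{eq.HJBN} the total cost under this feedback equals $\mathcal V^N(t_0,\bx)$.

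Next, I would build a measurable feedback $\bar\alpha:[t_0,T]\times\R^d\to\R^d$ for the limit problem by time-discretization and spatial clustering. Discretize $[t_0,T]$ into subintervals of length $h$, and on each $[t_k,t_{k+1}]$ partition $\{1,\dots,N\}$ into subgroups $G^k_\ell$ according to the positions $X^i_{t_k}$ (for instance by intersecting with a cubic grid of mesh $\varepsilon$ covering a ball $B_R$ chosen so that few particles lie outside). The semi-concavity estimate \eqref{estisemiconc} of Lemma \ref{lem.semiconcesti} controls the oscillation of $i\mapsto\alpha^{*,i}(t_k,\bX_{t_k})$ within each $G^k_\ell$ by a quantity of order $\varepsilon$. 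Define $\bar\alpha(t,\cdot)$ on $[t_k,t_{k+1}]$ by assigning to each cell the cluster-average of the $\alpha^{*,i}(t_k,\bX_{t_k})$ (extending by a fixed bounded value outside $B_R$ and mollifying to obtain a bounded measurable $\R^d$-valued function); let $m_t$ solve $\partial_t m=\Delta m-\mathrm{div}(\bar\alpha m)$ with $m_{t_0}=m^N_\bx$.

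The heart of the argument, and the main obstacle, is then to show inductively in $k$ that $\bE[\mathbf{d}_1(m^N_{\bX_{t}},m_{t})]\le CN^{-\beta}(1+M_2(m^N_\bx))$ uniformly on $[t_0,T]$. Within each subinterval the standard Fokker--Planck stability controlled by the $\linf$ bound on $\bar\alpha$ propagates the error; the fresh increment per step splits into three contributions: (i) the intra-cluster feedback oscillation of order $\varepsilon$ coming from the semi-concavity; (ii) the fluctuation of each cluster's empirical measure around its mean, handled by the concentration inequality proved in Subsection \ref{subsec.ProofLemma} applied subgroup by subgroup; (iii) the moment contribution from particles leaving $B_R$ together with the Fournier--Guillin-type rate \cite{FoGu} inside each cluster, which together produce the linear dependence on $M_2(m^N_\bx)$. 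Balancing the grid scale $\varepsilon$, the time step $h$, and the radius $R$ as appropriate powers of $N$ simultaneously renders all three contributions of size $N^{-\beta}$, with a typically non-sharp exponent $\beta\in(0,1]$ depending only on $d$.

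Finally, once this Wasserstein-$1$ comparison is in hand, the Lipschitz continuity of $\mathcal F,\mathcal G$ with respect to $\mathbf{d}_1$, the $\linf$ bound on the controls, and the local Lipschitz bound \eqref{hyp.l} on $L$ in its second argument yield that the cost of the admissible pair $(m,\bar\alpha)$ in problem \eqref{def.uwocommon} differs from $\mathcal V^N(t_0,\bx)$ by at most $CN^{-\beta}(1+M_2(m^N_\bx))$; admissibility of $(m,\bar\alpha)$ in the infimum defining $\mathcal U$ therefore gives $\mathcal U(t_0,m^N_\bx)\le\mathcal V^N(t_0,\bx)+CN^{-\beta}(1+M_2(m^N_\bx))$, which is \eqref{harderineq}. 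The real difficulty lies in the third step, namely in executing the multi-parameter optimization so that the semi-concavity-induced oscillation error, the cluster-wise concentration error, the time-discretization error and the Fokker--Planck stability constant all balance uniformly in $N$.
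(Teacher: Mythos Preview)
Your proposal has a genuine gap at its central step. You claim that the semiconcavity estimate of Lemma~\ref{lem.semiconcesti} ``controls the oscillation of $i\mapsto\alpha^{*,i}(t_k,\bX_{t_k})$ within each $G^k_\ell$ by a quantity of order~$\varepsilon$,'' where the $G^k_\ell$ are clusters of particles lying in the same spatial cell. This is not what semiconcavity gives. Inequality~\eqref{estisemiconc} is a \emph{one-sided} upper bound on the Hessian of $\mathcal V^N$; it does not provide the matching lower bound that would be needed to deduce any Lipschitz control of $\bx\mapsto D_{x^i}\mathcal V^N(t,\bx)$ uniform in~$N$. In fact, as the paper points out in the introduction, the optimal feedback cannot be expected to be uniformly continuous as a function of the empirical measure, since that would force $C^1$-regularity of the limit~$\mathcal U$, which fails in general. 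Thus your step~(i) -- bounding the intra-cluster oscillation of the feedback by spatial proximity alone -- does not go through, and without it the constructed $\bar\alpha$ need not be close to the particle controls in any useful sense.

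The paper circumvents exactly this obstruction in two ways. First, the clustering in Lemma~\ref{takis40} is done in \emph{control space}, not physical space: one covers the bounded range of the $\hat\alpha^k(t_0,\bx_0)$ by $O(\delta^{-d})$ balls, so that within each group the controls are $\delta$-close by construction, with no regularity of $\mathcal V^N$ required. Second, the missing lower bound is manufactured by a doubling-of-variables argument: at the penalized maximum, the quadratic penalization in~$\theta$ supplies a touching-from-below paraboloid, which combined with the semiconcave upper bound yields the two-sided control of Lemma~\ref{lem.semiconcTD} on $\sum_k|D_{x^k}\mathcal V^N(t,\bx)-D_{x^k}\mathcal V^N(t_0,\bx_0)|$ for $(t,\bx)$ near the maximizer. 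This is precisely what allows freezing the controls at their value at $(t_0,\bx_0)$ over a short window~$h$. Your direct-construction route would need an independent source for this two-sided bound, and none is available under the standing assumptions.
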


As pointed out in the introduction, the main difficulty is that it  does not
seem possible, at least to us, how to transform an optimal control for the $\mathcal V^N$ which depends on each particle into a feedback for $\mathcal U$. We overcome this difficulty by dividing the players into subgroups in such a way that the optimal controls for the agents in each subgroup are close and showing a propagation of chaos-type result  for each subgroup using a concentration inequality.
%\vs

We begin explaining how to create the subgroups based on an appropriate partition of $\{1,\ldots,N\}$. 

%Since we will use it again in the next section, state and prove the following lemma.

\begin{lem}\label{takis40}
For each $\delta>0$ there exist a constant $C$ depending only on the data ($\mathcal{F}, \mathcal{G},H)$), a partition $(C^j)_{j\in \{1, \dots, J\}}$ of $\{1, \dots, N\}$ such that $J \leq C \delta^{-d}$ and,  for $j=1, \dots, J$, controls $\bar \alpha^j\in \R^d$ such that, for all $k\in C^j$,
\be\label{def.alphaj} \left| H(x^k_0, ND_{x^k}\mathcal V^N(t_0,\bx_0)) + \bar \alpha^j \cdot (ND_{x^k}\mathcal V^N(t_0,\bx_0))+L(x^k_0 , \bar \alpha^j)\right| \leq C\delta.
\ee
\end{lem}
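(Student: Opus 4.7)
The plan is to exploit the Fenchel–Young equality: the quantity inside the absolute value in \eqref{def.alphaj} is exactly the Young duality gap between $\bar\alpha^j$ and the (pointwise) optimal control $\alpha^{*,k} := -D_pH(x^k_0, ND_{x^k}\mathcal V^N(t_0,\bx_0))$, so it vanishes when $\bar\alpha^j = \alpha^{*,k}$ and is Lipschitz in $\bar\alpha^j - \alpha^{*,k}$ provided all relevant quantities stay in a fixed compact. Thus the strategy reduces to a covering argument: group the particles $k$ according to which cell of a fixed discretization of a ball in $\R^d$ contains $\alpha^{*,k}$.

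First, I would invoke Lemma~\ref{lem.estiVN} (cf. Remark~\ref{rmk.bddcontrol}) to obtain a constant $R_0>0$, independent of $N$, such that
\[
\bigl|ND_{x^k}\mathcal V^N(t_0,\bx_0)\bigr|\le R_0 \quad\text{and}\quad |\alpha^{*,k}|\le R_0
\]
for all $k\in\{1,\ldots,N\}$ and all $(t_0,\bx_0)\in[0,T]\times(\R^d)^N$; the second bound follows from the first together with the continuity of $D_pH$ on the compact $\R^d\times\overline{B_{R_0}}$ (note that $D_pH$ depends on $x^k$ only through a bounded action via \eqref{hyp.quad}–\eqref{hyp.growth2}, but all we need is that $\alpha^{*,k}$ lies in some fixed ball $\overline{B_{R_0}}$).

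Next, I would cover $\overline{B_{R_0}}\subset\R^d$ with at most $J\le C\delta^{-d}$ disjoint Borel cells $(U_j)_{j=1}^J$ of diameter $\le\delta$ (e.g.\ intersect a standard axis‑aligned cubic grid of mesh $\delta/\sqrt d$ with $\overline{B_{R_0}}$), and set
\[
C^j := \{\,k\in\{1,\ldots,N\}\,:\,\alpha^{*,k}\in U_j\,\},\qquad \bar\alpha^j\in U_j\ \text{arbitrary (e.g.\ the center)}.
\]
Discarding empty cells if needed, $(C^j)_j$ is a partition of $\{1,\ldots,N\}$ with $J\le C\delta^{-d}$. For any $k\in C^j$, we have $|\bar\alpha^j-\alpha^{*,k}|\le\delta$.

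For the final step, abbreviate $p^k := ND_{x^k}\mathcal V^N(t_0,\bx_0)$. The Fenchel–Young equality at the maximizer gives
\[
H(x^k_0,p^k) + \alpha^{*,k}\cdot p^k + L(x^k_0,\alpha^{*,k}) = 0,
\]
so subtracting,
\[
H(x^k_0,p^k)+\bar\alpha^j\cdot p^k+L(x^k_0,\bar\alpha^j)
= (\bar\alpha^j-\alpha^{*,k})\cdot p^k + \bigl[L(x^k_0,\bar\alpha^j)-L(x^k_0,\alpha^{*,k})\bigr].
\]
The first term is bounded by $R_0\,|\bar\alpha^j-\alpha^{*,k}|\le R_0\delta$. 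For the second, since both $\bar\alpha^j$ and $\alpha^{*,k}$ lie in $\overline{B_{R_0}}$, the local Lipschitz bound \eqref{hyp.l} on $D_aL$ (valid on $\R^d\times\overline{B_{R_0}}$) yields $|L(x^k_0,\bar\alpha^j)-L(x^k_0,\alpha^{*,k})|\le C_{R_0}\delta$. Combining gives \eqref{def.alphaj} with $C:=R_0+C_{R_0}$, and this constant depends only on the data. The only genuine subtlety is making sure that both $p^k$ and $\alpha^{*,k}$ stay in a fixed compact independently of $N$ — which is precisely the content of the Lipschitz estimate of Lemma~\ref{lem.estiVN} — so there is no real obstacle beyond bookkeeping.
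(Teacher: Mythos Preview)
Your proof is correct and follows essentially the same route as the paper: define the pointwise optimal controls $\alpha^{*,k}=-D_pH(x_0^k,ND_{x^k}\mathcal V^N(t_0,\bx_0))$, use Lemma~\ref{lem.estiVN} (Remark~\ref{rmk.bddcontrol}) to confine them to a fixed ball, cover that ball by $O(\delta^{-d})$ sets of diameter $\delta$, and then bound the duality gap by the Fenchel--Young identity together with the local Lipschitz estimate \eqref{hyp.l} on $L$. The only cosmetic difference is that you use a disjoint grid partition while the paper uses a $\delta$-ball cover, which is immaterial.
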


\begin{proof}
%\begin{enumerate}
%    \item for each $k \in C^j$, $\bar \alpha^j$ is
%    $C\delta$-optimal in the definition of $H(x^k_0, ND_{x^k} \mathcal{V}^N(t_0,\bx_0))$, that is, for all $k\in C^j$,
%    %in the sense that
%    \begin{align} \label{def.alphaj}
%        \left| H(x^k_0, ND_{x^k}\mathcal V^N(t_0,\bx_0)) + \bar \alpha^j \cdot (ND_{x^k}\mathcal V^N(t_0,\bx_0))+L(x^k_0 , \bar \alpha^j)\right| \leq C\delta,
%    \end{align}
%    \item the number $J$ of sets in the partition satisfies $J \leq C \delta^{-d}$.
%
%In view of the boundedness of the optimal feedbacks for $\mathcal{V}^N$ (see Remark \ref{rmk.bddcontrol}), there is a constant $C$ depending only on the data, such that, for each $\delta > 0$, we may find a partition $(C^j)_{j\in \{1, \dots, J\}}$ of $\{1, \dots, N\}$, together with $\bar \alpha^j\in \R^d$ for $j=1, \dots, J$ such that
%\begin{enumerate}
%    \item for each $k \in C^j$, $\bar \alpha^j$ is
%    $C\delta$-optimal in the definition of $H(x^k_0, ND_{x^k} \mathcal{V}^N(t_0,\bx_0))$, that is, for all $k\in C^j$,
%    %in the sense that
%    \begin{align} \label{def.alphaj}
%        \left| H(x^k_0, ND_{x^k}\mathcal V^N(t_0,\bx_0)) + \bar \alpha^j \cdot (ND_{x^k}\mathcal V^N(t_0,\bx_0))+L(x^k_0 , \bar \alpha^j)\right| \leq C\delta,
%    \end{align}
%    \item the number $J$ of sets in the partition satisfies $J \leq C \delta^{-d}$.
%\end{enumerate}
%\vs
Let $\hat{\alpha}^k(t,\bx) =  -D_p H(x^k,ND_{x^k} \mathcal{V}^N(t,\bx))$ be  the optimal feedback for particle $k$, and  recall (see Remark \ref{rmk.bddcontrol}), that there exists $R$ depending only on the data such that $|\hat{\alpha}^k(t,\bx)| \leq R$.

%\vs
Given $\delta > 0 $, we can find a $\delta$-covering of  $B_R \subset \R^d$  consisting of $J \leq C\delta^{-d}$ balls of radius $\delta$ centered at  $(\bar{\alpha}_j)_{j \in \{1,...,J\}} \subset B_R$.
%\vs

Then, we choose the partition $(C^j)_{j \in 1,\dots J}$ so that, for each $k \in C^j$, $|\hat{\alpha}^k(t,\bx) - \bar \alpha^j| \leq \delta$. It follows using \eqref{hyp.l} that,  for each $k \in C^j$,
\begin{align*}
    | H(x^k_0, ND_{x^k}\mathcal V^N(t_0,\bx_0)) + \bar \alpha^j \cdot (ND_{x^k}\mathcal V^N(t_0,\bx_0))+L(x^k_0 , \bar \alpha^j)| \\ =  | \big(\alpha^j - \hat{\alpha}^k(t_0,\bx_0) \big) \cdot (ND_{x^k}\mathcal V^N(t_0,\bx_0))+L(x^k_0 , \bar \alpha^j) - L(x_0^k, \hat{\alpha}(t_0,\bx_0)) | \\
    \leq \big(ND_{x^k}\mathcal V^N(t_0,\bx_0)) + \norm{D_a L}_{\linf(\R^d \times B_R)}\big)|\hat{\alpha}^k(t_0,\bx_0) - \bar \alpha^j| \leq C \delta.
\end{align*}
\end{proof}

For $\delta >0$ we consider such a partition $(C^j)_{j \in \{1,\dots ,J \}}$ of $\{1,\dots,N \}$ with associated controls $\bar{\alpha}^1,\dots \bar{\alpha}^J$ satisfying the conditions of Lemma \ref{takis40} and we define $n^j \doteq |C^j|$ for all $j \in \{ 1,\dots,J \}$. Fix $j\in \{1,\ldots,J\}$, set $\alpha^k=\bar \alpha^j$ if $k\in C^j$, let, for $t_0,s_0 \in [0,T]$ and $\bf{x}_0, \bf{y}_0 \in (\R^d)^N$,

\begin{equation}
\begin{array}{ll}
& \displaystyle X^k_{t_0+\tau}= x^k_0+\tau \alpha^k + \sqrt{2}B^k_\tau \ \ \text{and} \ \ \displaystyle Y^k_{s_0+\tau}= y^k_0+\tau \alpha^k + \sqrt{2} B^k_{\tau},
\\
& \displaystyle m^j_{{\bf X}_{t_0+\tau}}=\dfrac{1}{n^j}\sum_{k\in C^j}  \delta_{X^k_{t_0+\tau}}
 \ \ \text{and} \ \ \displaystyle m^j_{{\bf Y}_{s_0+\tau}}= \dfrac{1}{n^j}\sum_{k\in C^j} \delta_{Y^k_{s_0+\tau}},
 \end{array}
  \label{ConstructionXetY}
\end{equation}

consider the solution $m^j$ to
%and Let also $\ds m^j_{{\bf Y}_{s_0+\tau}}:= (n^j)^{-1}\sum_{k\in C^j} \delta_{Y^k_{s_0+\tau}}$, $\ds m^j_{{\bf X}_{t_0+\tau}}:= (n^j)^{-1}\sum_{k\in C^j} \delta_{X^k_{t_0+\tau}}$ and $m^j$ be the solution to
\begin{equation}
\newline
%\left\{\begin{array}{l}
 \partial_t m^j- \Delta m^j +\bar\alpha^j \cdot Dm^j=0 \ \  {\rm in} \ \ (s_0,T)\times \R^d \ \ \text{and} \ \
m^j(s_0,\cdot)= m^j_{\by_0} \ \ \text{in} \ \ \R^d,
\label{Constructionmetmj}
%\end{array}\right.
\end{equation}
and, finally, set $m(s)= \dfrac{1}{N} \sum_{j\in J} n^jm^j(s)$.
%\vs

We state next the concentration inequality we need for the proof of Proposition \ref{prop.hardineq}.

%\begin{lem}\label{lem.estid1}  There exist positive constants  $\beta\in (0,1/2)$  depending on $d$ and $C$, which
 %depends only on $\sup_j |\bar\alpha^j|$,  $d$ and $T$, such that, for all $h\geq 0$,
%\be\label{ineq.concentration}
%\E\left[  {\bf d}_1( m^j(s_0+h), m^j_{{\bf Y}_{s_0+h}}) \right] \leq  C(1+M_2^{1/2}(m^j(s_0)))(h/n^j)^\beta,
%\qquad \forall h\geq 0,
%\ee
%and
%\begin{equation}
%\E\left[  {\bf d}_1( m^j(s_0+h), m^j_{{\bf X}_{t_0+h}}) \right] \leq  {(n^j)}^{-1} \sum_{k\in C^j}|x^k_0-y^k_0|+ C(1+M_2^{1/2}(m^j(s_0)))(h/n^j)^\beta, %\qquad \forall h\geq 0,
%\label{LemmaEstid1ineq2}
%\end{equation}
%and, as a consequence,
%\begin{equation}
%\E\left[ {\bf d}_1( m(s_0+h), m^N_{{\bf Y}_{s_0+h}}) \right] \leq C\delta^{-d\beta}(1+ \lambda^{-\frac{1}{2}})(h/N)^\beta,
%\label{LemmaEstid1ineq3}
%\end{equation}
%and
%\begin{equation}
%\E\left[ {\bf d}_1( m(s_0+h), m^N_{{\bf X}_{t_0+h}}) \right] \leq  C\theta+ C\delta^{-d\beta}(1+ \lambda^{-\frac{1}{2}}) (h/N)^\beta. %\frac{h^{\beta}}{N^{\beta}}.
%\label{LemmaEstid1ineq4}
%\end{equation}
%\end{lem}

\begin{lem}\label{lem.estid1}  There exist a positive constant $\beta\in (0,1/2)$, depending on $d$ and a positive constant $C$, which
 depends only on $\sup_j |\bar\alpha^j|$,  $d$ and $T$, such that, for all $h\geq 0$,
\be\label{ineq.concentration}
\E\left[  {\bf d}_1( m^j(s_0+h), m^j_{{\bf Y}_{s_0+h}}) \right] \leq  C(1+M_2^{1/2}(m^j(s_0)))(h/n^j)^\beta,
%\qquad \forall h\geq 0,
\ee
\begin{equation}
\E\left[  {\bf d}_1( m^j(s_0+h), m^j_{{\bf X}_{t_0+h}}) \right] \leq  {(n^j)}^{-1} \sum_{k\in C^j}|x^k_0-y^k_0|+ C(1+M_2^{1/2}(m^j(s_0)))(h/n^j)^\beta, %\qquad \forall h\geq 0,
\label{LemmaEstid1ineq2}
\end{equation}
and, as a consequence,
\begin{equation}
\E\left[ {\bf d}_1( m(s_0+h), m^N_{{\bf Y}_{s_0+h}}) \right] \leq C\delta^{-d\beta}(1+ M_2(m(s_0))^{\frac{1}{2}})(h/N)^\beta
\label{LemmaEstid1ineq3}
\end{equation}

\begin{equation}
\E\left[ {\bf d}_1( m(s_0+h), m^N_{{\bf X}_{t_0+h}}) \right] \leq  \frac{1}{N}\sum_{k=1}^N |x_0^k-y_0^k|+ C\delta^{-d\beta}(1+ M_2(m(s_0))^{\frac{1}{2}}) (h/N)^\beta. %\frac{h^{\beta}}{N^{\beta}}.
\label{LemmaEstid1ineq4}
\end{equation}

\end{lem}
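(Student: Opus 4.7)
The plan is to first establish the core concentration bound~\eqref{ineq.concentration} for a single subgroup $C^j$, and then derive the remaining three inequalities from it via a synchronous coupling argument and the joint convexity of ${\bf d}_1$.

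\textbf{Step 1: the estimate~\eqref{ineq.concentration}.} Since $m^j(s_0) = (n^j)^{-1}\sum_{k\in C^j}\delta_{y_0^k}$ and~\eqref{Constructionmetmj} has the constant drift $\bar\alpha^j$, the measure $m^j(s_0+h)$ equals $(n^j)^{-1}\sum_{k\in C^j}\mu_k$, where $\mu_k = \sL(Y^k_{s_0+h})$ is Gaussian with mean $y_0^k + h\bar\alpha^j$ and covariance $2hI_d$. Kantorovich--Rubinstein duality gives
\[ {\bf d}_1\bigl(m^j(s_0+h), m^j_{{\bf Y}_{s_0+h}}\bigr) = \sup_{\phi \in {\bf L}} \frac{1}{n^j}\sum_{k\in C^j} \bigl(\phi(Y^k_{s_0+h}) - \mu_k(\phi)\bigr). \]
For a fixed 1-Lipschitz $\phi$, the summands are independent, mean zero, and by Gaussian concentration for Lipschitz functionals of Gaussians each has sub-Gaussian tails with variance proxy of order $h$; hence the average concentrates at scale $O(\sqrt{h/n^j})$. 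To lift this pointwise control to a uniform bound over $\phi \in {\bf L}$, I would truncate to a ball $B_R$ (using Markov's inequality with $M_2(m^j(s_0))$ to bound the mass escaping $B_R$) and either cover the restrictions of the 1-Lipschitz functions on $B_R$ by a sup-norm $\varepsilon$-net or employ the dyadic partition technique of~\cite{FoGu}. Balancing the covering cost, which grows exponentially in $d$, against the sub-Gaussian deviation probabilities, and optimizing over $R$ and $\varepsilon$, yields the rate $(h/n^j)^\beta$ with an exponent $\beta \in (0,1/2)$ depending only on $d$.

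\textbf{Step 2: estimate~\eqref{LemmaEstid1ineq2}.} By~\eqref{ConstructionXetY}, the particles $X^k$ and $Y^k$ share the same Brownian motion and the same drift $\bar\alpha^j$, so pathwise $X^k_{t_0+h} - Y^k_{s_0+h} = x_0^k - y_0^k$. Pairing the two empirical measures accordingly gives
\[ {\bf d}_1\bigl(m^j_{{\bf X}_{t_0+h}}, m^j_{{\bf Y}_{s_0+h}}\bigr) \leq \frac{1}{n^j}\sum_{k\in C^j} |x_0^k - y_0^k|, \]
and~\eqref{LemmaEstid1ineq2} follows from the triangle inequality combined with~\eqref{ineq.concentration}.

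\textbf{Step 3: aggregation to~\eqref{LemmaEstid1ineq3} and~\eqref{LemmaEstid1ineq4}.} Using the decompositions $m(s_0+h) = \sum_{j=1}^J (n^j/N)\, m^j(s_0+h)$ and $m^N_{{\bf Y}_{s_0+h}} = \sum_{j=1}^J (n^j/N)\, m^j_{{\bf Y}_{s_0+h}}$, the joint convexity of ${\bf d}_1$ gives
\[ {\bf d}_1\bigl(m(s_0+h), m^N_{{\bf Y}_{s_0+h}}\bigr) \leq \sum_{j=1}^J \frac{n^j}{N}\, {\bf d}_1\bigl(m^j(s_0+h), m^j_{{\bf Y}_{s_0+h}}\bigr). \]
Applying~\eqref{ineq.concentration} termwise and invoking Hölder's inequality in the form $\sum_j (n^j)^{1-\beta} \leq J^\beta N^{1-\beta}$ yields $\sum_j (n^j/N)(h/n^j)^\beta \leq J^\beta (h/N)^\beta \leq C\delta^{-d\beta}(h/N)^\beta$, since $J \leq C\delta^{-d}$; the moment factor is handled via Jensen, $\sum_j (n^j/N)M_2^{1/2}(m^j(s_0)) \leq M_2^{1/2}(m(s_0))$. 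Inequality~\eqref{LemmaEstid1ineq4} follows identically using~\eqref{LemmaEstid1ineq2} in place of~\eqref{ineq.concentration}.

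The main obstacle is Step 1, the non-i.i.d.\ concentration: the Gaussian concentration yields sharp pointwise control with rate $\sqrt{h/n^j}$ for every single Lipschitz observable, but promoting this to a supremum over ${\bf L}$ loses a polynomial factor through the $\varepsilon$-net of Lipschitz functions on $B_R$ (with metric entropy growing exponentially in $d$), and the further trade-off with the truncation radius dictated by $M_2^{1/2}(m^j(s_0))$ is precisely what degrades the exponent from the naive $1/2$ down to some $\beta \in (0,1/2)$.
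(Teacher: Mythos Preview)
Your approach matches the paper's: Step~1 is precisely the paper's Proposition~\ref{ConcentrationInequalityGeneralCase} (there the sub-Gaussian tail for fixed $\phi$ is obtained via It\^o's formula for the solution of the backward heat equation rather than by direct Gaussian concentration, but with constant drift the two are equivalent, and the covering/truncation scheme is the same, giving $\beta=1/(2d+4)$), and Step~2 is identical to the paper's.

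There is a gap in Step~3. After inserting~\eqref{ineq.concentration} termwise, the quantity to be controlled is
\[
h^\beta\sum_{j} \frac{(n^j)^{1-\beta}}{N}\bigl(1+M_2^{1/2}(m^j(s_0))\bigr),
\]
in which the moment factor is weighted by $(n^j)^{1-\beta}/N$, not by $n^j/N$. Your Jensen inequality $\sum_j (n^j/N)M_2^{1/2}(m^j(s_0))\leq M_2^{1/2}(m(s_0))$, while true, is therefore not directly applicable: you cannot decouple the moment factor from the $(n^j)^{-\beta}$ factor inside the sum. The paper closes this with Cauchy--Schwarz,
\[
\sum_{j}\frac{n^j}{N}\,\frac{M_2^{1/2}(m^j(s_0))}{(n^j)^\beta}
\;\leq\;
\Bigl(\sum_{j}\frac{n^j}{N}M_2(m^j(s_0))\Bigr)^{1/2}
\Bigl(\sum_{j}\frac{(n^j)^{1-2\beta}}{N}\Bigr)^{1/2}
= M_2^{1/2}(m(s_0))\Bigl(\sum_{j}\frac{(n^j)^{1-2\beta}}{N}\Bigr)^{1/2},
\]
and then bounds the last factor by $(J/N)^\beta$ via concavity of $n\mapsto n^{1-2\beta}$; this is exactly where the restriction $\beta<1/2$ is used. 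With this correction your Step~3 goes through.
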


%We postpone  the proof of Lemma~\ref{lem.estid1}  to subsection \ref{subsec.ProofLemma} except for the one of the third inequality, since it contains an argument is needed for the ongoing proof.

%In the proof we find $\beta = 1/(2d+4)$.

\begin{proof}
%We give now the proof of this inequality since this argument needed in the ongoing proof.

Inequality \eqref{ineq.concentration} is precisely the concentration inequality \eqref{ConcentrationIneqGeneralFormulation} that we treat separately in Proposition \ref{ConcentrationInequalityGeneralCase} of Section \ref{subsec.ProofLemma} because it is interesting in its own. Being $Y_{s_0+h}^k - X^k_{t_0+h} = y_0^k - x_0^k$ for all $h\geq 0$ and all $k \in \{ 1,\dots N \}$, inequality \eqref{LemmaEstid1ineq2} follows in a straightforward way from \eqref{ineq.concentration}. Similarly \eqref{LemmaEstid1ineq4} follows from \eqref{LemmaEstid1ineq3}. It remains to prove estimate \eqref{LemmaEstid1ineq3}.

Using \eqref{ineq.concentration} as well as the Cauchy-Schwarz inequality,  the concavity of the maps $n \rightarrow n^{1-\beta}$ and $n\rightarrow n^{1-2\beta}$, the fact that $\sum_j n^j=N$, and the assumption that $\beta\in (0,1/2)$, we obtain the following string of inequalities
\begin{align*}
\E\left[ {\bf d}_1( m(s_0+h), m^N_{{\bf Y}_{s_0+h}}) \right] & \leq \sum_{j\in J} \frac{n^j}{N} \E\left[  {\bf d}_1( m^j(s_0+h), m^j_{{\bf Y}_{s_0+h}}) \right] \\
& \leq  C\sum_{j\in J} \frac{n^j}{N} (1+M_2^{1/2}(m^j(s_0)))\frac{h^{\beta} }{(n^j)^\beta} \\
& \hskip-.6in \leq Ch^{\beta} \big[\sum_{j\in J} \frac{(n^j)^{1-\beta}}{N}  +
( \sum_{j\in J} \frac{n^j}{N}M_2(m^j(s_0)) )^{1/2}(\sum_{j\in J} \frac{n^j}{N (n^j)^{2\beta}} )^{1/2}\big]\\
&  \hskip-.6in \leq Ch^{\beta} \big[\frac{J}{N} \left( \sum_{j \in J} \frac{n^j}{J}\right)^{1-\beta}+ M_2^{1/2}(m(s_0)) \sqrt{\frac{J}{N}}( \sum_{j \in J} \frac{1}{J}(n^j)^{1-2\beta})^{1/2}\big] \\
&  \hskip-.6in\leq C {(\dfrac{J h}{N}})^\beta \big( 1 + M_2^{1/2}(m(s_0)) \Big).
\end{align*}
Recalling that $J\leq C\delta^{-d}$ is enough to conclude.

%where we used Cauchy-Schwarz inequality, the concavity of the applications $n \rightarrow n^{1-\beta}$ and $n\rightarrow n^{1-2\beta}$ and the fact that $\sum_j n^j=N$, as well as the assumption that $\beta\in (0,1/2)$. Recalling that $|J|=C\delta^{-d}$ and the estimate of $M_2(m(s_0))$ in Lemma \ref{lem.estixkyk} gives the result.
\end{proof}

We are now ready, using the above construction, to prove Proposition \ref{prop.hardineq}.

\begin{proof}[Proof of Proposition \ref{prop.hardineq}] Following a viscosity solutions-type argument, we double the variables and, for $\theta,\lambda\in (0,1)$, we set
\be
\begin{split}
M:= & \max_{(t,{\bf x}), (s,{\bf y})\in [0,T]\times (\R^d)^N} e^{ s} (\mathcal U(s,m^N_{{\bf y}}) -{\mathcal V}^N(t,{\bf x}))\\
& -\frac{1}{2\theta N} \sum_{i=1}^N |x^i-y^i|^2  -\frac{1}{2\theta }|s-t|^2 -\frac{\lambda}{2N} \sum_{i=1}^N |y^i|^2.
\end{split}
\label{DefinitionofM25Nov}
\ee
We denote by $((t_0,\bx_0),(s_0, \by_0))$ a maximum point in the expression above. Using the uniform bound on $\mathcal U$ and $\mathcal V^N$ and the Lipschitz estimate for $\mathcal V^N$ we can estimate the error related to the penalization. We find that there exists $C>0$ such that,

\begin{equation}
\frac{1}{N} \sum_{i=1}^N |x_0^{i}-y_0^{i}|^2 +|s_0-t_0|^2 \leq C \theta^2 \ \ \text{and} \ \  \frac{1}{N} \sum_{i=1}^N |y^i_0|^2\leq \dfrac{C}{\lambda}.
\label{lem.estixkyk}
\end{equation}

%\begin{lem} \label{lem.estixkyk} Assume \eqref{ass.main}. There exists $C>0$ such that, for any $i\in \{1, \dots, N\}$,
%$
%\frac{1}{N} \sum_{i=1}^N |x_0^{i}-y_0^{i}|^2 +|s_0-t_0|^2 \leq C \theta^2 \ \ \text{and} \ \  \frac{1}{N} \sum_{i=1}^N |y^i_0|^2\leq \dfrac{C}{\lambda}.
%$
%\end{lem}

%\vs

%We continue with the ongoing proof.
%\vs

%The rest of the proof of Lemma \ref{lem.estid1} is postponed to Subsection \ref{subsec.ProofLemma} and
%We proceed with the ongoing proof.

\vs
Now we fix $\delta >0$ and we define $(\textbf{X}_t)_{t \geq t_0}$, $(\textbf{Y}_s)_{s \geq s_0}$, $m^j$ and $m$ according to \eqref{ConstructionXetY} and \eqref{Constructionmetmj} for some partition $(C^j)_{j \in \{1,\dots ,J \}}$ of $\{1,\dots,N \}$ with associated controls $\bar{\alpha}^1,\dots \bar{\alpha}^J$ satisfying the conditions of Lemma \ref{takis40}. By estimate \eqref{lem.estixkyk} it holds, in particular, $\frac{1}{N} \sum_{i=1}^N |x_0^{i}-y_0^{i}| \leq C\theta$ and $M_2(m(s_0)) \leq C \lambda^{-1}$.

%Combining Lemma \ref{lem.estid1} with the estimate \eqref{lem.estixkyk} we find in particular that 

%\begin{equation*}
%\begin{array}{ll}
%\E\left[ {\bf d}_1( m(s_0+h), m^N_{{\bf Y}_{s_0+h}}) \right] \leq C\delta^{-d\beta}(1+\lambda^{-\frac{1}{2}})(h/N)^\beta \\
%\E\left[ {\bf d}_1( m(s_0+h), m^N_{{\bf X}_{t_0+h}}) \right] \leq  C \theta+ C\delta^{-d\beta}(1+ \lambda^{-\frac{1}{2}}) (h/N)^\beta. %\frac{h^{\beta}}{N^{\beta}}.
%\end{array}
%\end{equation*}

The Lipschitz regularity of $\mathcal U$ in Lemma \ref{It'^o} and the definition of ${\bf X}_t$ and ${\bf Y}_t$ give, by definition of M, %We have
\begin{align*}
M & \geq \E\Bigl[ e^{s_0+h} (\mathcal U(s_0+h,m^N_{{\bf Y}_{s_0+h}})  - \mathcal V^N(t_0+h,{\bf X}_{t_0+h}))\\
&  \qquad\qquad -\frac{1}{2\theta}\left(\frac{1}{N} \sum_{k=1}^N |Y^k_{s_0+h}-X^k_{t_0+h}|^2
+(t_0-s_0)^2\right)  -\frac{\lambda}{2N} \sum_{i=1}^N |Y^i_{s_0+h}|^2 \Bigr] \\
& \geq \E\Bigr[ e^{s_0+h} ( \mathcal U(s_0+h, m(s_0+h))- \mathcal V^N(t_0+h, {\bf X}_{t_0+h}))\Bigr] - C\delta^{-d\beta}(1+ \lambda^{-\frac{1}{2}})\frac{h^{\beta}}{N^{\beta}} \\
& \qquad\qquad -\frac{1}{2\theta}\left(\frac{1}{N} \sum_{k=1}^N |y^k_0-x^k_0|^2+(s_0-t_0)^2\right) -\frac{\lambda}{2N} \sum_{i=1}^N (|y^i_0|+Ch^{1/2})^2.
\end{align*}
\vs
To continue,  we need a dynamic programming-type argument, which is stated next. Its proof is postponed for later in the paper.
\vs

\begin{lem} \label{lem.PDD} With the notation above, we have
\begin{align*}
\mathcal U(s_0+h,m(s_0+h))& \geq \mathcal U(s_0,m_{{\bf y}^N_0})\\[1.5mm]
&-\int_{s_0}^{s_0+h} (\sum_{j=1}^J \int_{\R^d}\dfrac{1}{N}n^jL(x,\bar\alpha^j) m^j(s,x)dx +\mathcal F(m(s)))ds.
\end{align*}
\end{lem}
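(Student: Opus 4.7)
The plan is to realize $(m,\alpha)$, with $m = \frac{1}{N}\sum_j n^j m^j$ and a suitable feedback $\alpha$, as an admissible competitor in the dynamic programming principle of Proposition~\ref{pro.dppnocommon}, and then use the convexity of $L$ in the control variable to dominate the resulting cost.

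First, I would define on $(s_0, s_0+h]\times \R^d$ the feedback
\[
\alpha(s,x) \;=\; \frac{\sum_{j=1}^J n^j m^j(s,x)\,\bar\alpha^j}{\sum_{j=1}^J n^j m^j(s,x)} \;=\; \frac{1}{N\,m(s,x)}\sum_{j=1}^J n^j m^j(s,x)\,\bar\alpha^j,
\]
extending it by $0$ where $m(s,x)=0$. Since each $m^j$ solves a linear parabolic equation starting from a sum of Dirac masses with bounded constant drift, the heat-kernel representation guarantees that $m^j(s,\cdot)$ is smooth and strictly positive on $\R^d$ for every $s>s_0$, so $\alpha$ is a well-defined Borel measurable function, and $|\alpha(s,x)|\leq \max_j|\bar\alpha^j|\leq R$ by Remark~\ref{rmk.bddcontrol}.

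Next, I would verify that $(m,\alpha)\in\mathcal A(s_0,m_{\by_0}^N)$. Since each $\bar\alpha^j$ is constant in space, the equation \eqref{Constructionmetmj} can be rewritten in divergence form as $\partial_s m^j=\Delta m^j-\mathrm{div}(\bar\alpha^j m^j)$. Summing with the weights $n^j/N$, one gets
\[
\partial_s m \;=\; \Delta m - \mathrm{div}\!\left(\frac{1}{N}\sum_j n^j\bar\alpha^j m^j\right) \;=\; \Delta m - \mathrm{div}(m\,\alpha),
\]
by the very definition of $\alpha$, and $m(s_0,\cdot)=\frac{1}{N}\sum_j n^j m^j_{\by_0}=m_{\by_0}^N$. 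Continuity of $s\mapsto m(s,\cdot)$ in $\mathcal P_2(\R^d)$ follows from that of each $m^j$, so $(m,\alpha)$ is admissible on $[s_0,s_0+h]$.

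Applying Proposition~\ref{pro.dppnocommon} with this specific competitor yields
\[
\mathcal U(s_0, m_{\by_0}^N) \;\leq\; \int_{s_0}^{s_0+h}\!\Big(\int_{\R^d} L(x,\alpha(s,x))\,m(s,dx) + \mathcal F(m(s))\Big)\,ds + \mathcal U(s_0+h,m(s_0+h)).
\]
The last step is to bound the running cost from above using the convexity of $L(x,\cdot)$. Writing $\lambda^j(s,x):=n^j m^j(s,x)/(N m(s,x))$ for the convex weights (which sum to $1$ and satisfy $\alpha(s,x)=\sum_j \lambda^j(s,x)\bar\alpha^j$), Jensen's inequality gives
\[
L(x,\alpha(s,x)) \;\leq\; \sum_{j=1}^J \lambda^j(s,x)\,L(x,\bar\alpha^j),
\]
and multiplying by $m(s,x)$ and integrating in $x$ yields
\[
\int_{\R^d} L(x,\alpha(s,x))\,m(s,dx) \;\leq\; \sum_{j=1}^J \frac{n^j}{N}\int_{\R^d} L(x,\bar\alpha^j)\,m^j(s,dx).
\]
Substituting this into the inequality from dynamic programming and rearranging gives exactly the statement of Lemma~\ref{lem.PDD}. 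The only slightly delicate point is the ``smoothing'' argument making $\alpha$ well-defined despite the Dirac initial condition for $m$, but this is taken care of by the instantaneous regularization of the heat equation; everything else is a routine application of the dynamic programming principle together with Jensen's inequality.
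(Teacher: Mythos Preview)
Your proof is correct and uses the same two essential ingredients as the paper's argument: the dynamic programming principle for $\mathcal U$ (Proposition~\ref{pro.dppnocommon}) and the convexity of $L$ in the control variable. The route, however, is somewhat different. The paper introduces an auxiliary ``multi-population'' value function $\mathcal U^K(t_0,m^1_0,\dots,m^K_0)$, in which each sub-population $m^k$ can be driven by its own feedback, and proves the identity $\mathcal U^K(t_0,m^1_0,\dots,m^K_0)=\mathcal U(t_0,\sum_k m^k_0)$; the nontrivial direction of that identity is exactly the Jensen step you perform. Once this identity is in hand, the lemma follows by taking $(m^j,\bar\alpha^j)$ as a competitor for $\mathcal U^K$. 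Your approach bypasses $\mathcal U^K$ entirely by building the averaged feedback $\alpha=\sum_j \lambda^j\bar\alpha^j$ explicitly, checking directly that $(m,\alpha)$ is admissible for $\mathcal U$, and then applying Jensen pointwise. This is more elementary and self-contained for the specific statement, while the paper's formulation isolates a reusable principle (the equality $\mathcal U^K=\mathcal U$) that makes the subsequent dynamic programming step transparent.
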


Using It\^o's formula for $\mathcal V^N$ we find
\begin{align*}
&M  \geq e^{s_0+h}\mathcal U(s_0,m^N_{{\bf y}_0}) -e^{s_0+h} \int_{s_0}^{s_0+h}(\int_{\R^d}  \sum_{j=1}^J \dfrac{1}{N}n^j L(x, \bar\alpha^j)m^j(s,x)dx+\mathcal F(m(s)))ds \\
&  - e^{s_0+h} \E\left[  \mathcal V^N(t_0,{\bf x}_0)  +\int_{t_0}^{t_0+h}\big(\partial_t + \sum_{k=1}^N [\Delta_{x_k} + {\alpha^k} \cdot D_{x_k}]\big) V^N(t,{\bf X}_t) dt \right] -\\
&C\delta^{-d\beta}(1+ \lambda^{-\frac{1}{2}})\frac{h^{\beta}}{N^{\beta}}  -\frac{1}{2\theta}\big(\frac{1}{N} \sum_{k=1}^N |y^k_0-x^k_0|^2+(s_0-t_0)^2\big)-\frac{\lambda}{2N} \sum_{i=1}^N (|y^i_0|+Ch^{1/2})^2.
\end{align*}
Since the $\bar\alpha^j$ are uniformly bounded, the map $L(\cdot ,\bar\alpha^j)$ is uniformly Lipschitz  independently of $j$.
%\big(\partial_t + \sum_{k=1}^N [\Delta_{x_k} + \alpha^k\cdot D_{x_k}]\bog) V^N(t,{\bf X}_t)
Hence, using  Lemma \ref{lem.estid1} and Lemma \ref{lem.estixkyk}, we find
\begin{align*}
&\int_{s_0}^{s_0+h}\int_{\R^d}  \sum_{j=1}^J \dfrac{1}{N}n^j L(x, \bar\alpha^j)m^j(s,x)dxds\\
&   \leq \E\left[\int_{s_0}^{s_0+h} \sum_{j=1}^J ( \sum_{k\in C^j}\dfrac{1}{N} L(X^k_{t_0-s_0+s}, \bar\alpha^j)  + C\dfrac{1}{N}n^j {\bf d}_1( m^j(s), m^j_{{\bf X}_{t_0-s_0+s}}) ) ds \right] \\
&   \leq \E\left[\int_{t_0}^{t_0+h}  \sum_{k=1}^N \dfrac{1}{N}L(X^k_s, \alpha^k)ds \right] + C\theta h+ C  \sum_{j=1}^J \dfrac{1}{N}n^j(1+M_2^{1/2}(m^j_{s_0})) \frac{h^{\beta}}{(n^j)^\beta}\\
&   \leq \E\left[\int_{t_0}^{t_0+h}  \sum_{k=1}^N \dfrac{1}{N}L(X^k_{s}, \alpha^k)ds \right] +  C\theta h+C\delta^{-d\beta}(1+ \lambda^{-\frac{1}{2}})\frac{h^{\beta}}{N^{\beta}}.
\end{align*}
Note that  in the last inequality we used exactly the same argument as for the proof  given above for  the third inequality of Lemma \ref{lem.estid1}.
\vs
Hence, recalling the optimality of $(\bx_0,\by_0)$  in \eqref{DefinitionofM25Nov} and employing  the equation for $\mathcal V^N$, we get
\begin{align*}
0 & \geq (e^{s_0+h}-e^{s_0})(\mathcal U(s_0,m^N_{{\bf y}_0})-\mathcal V^N(t_0,{\bf x}_0)) - C\delta^{-d\beta}(1+ \lambda^{-\frac{1}{2}})\frac{h^{\beta}}{N^{\beta}} \\
&-C\lambda h^{1/2} N^{-1} \sum_{i=1}^N|y^i_0|-C\theta h
 -e^{s_0+h}\E\left[ \int_{s_0}^{s_0+h}(\mathcal F(m(s))-\mathcal F(m^N_{{\bf X}_{s_0-t_0+s}}))ds\right]\\
&\qquad  -  e^{s_0+h} \E\big[ \frac{1}{N}\int_{t_0}^{t_0+h} \sum_{k=1}^N ( L(X^k_s, \alpha^k)+ \alpha^k \cdot (ND_{x^k}\mathcal V(s,{\bf X}_s))\\[1.5mm]
& \qquad +H(X^k_s, ND_{x^k}\mathcal V(s,{\bf X}_s))) ds \big].
\end{align*}
\vs
Using  the Lipschitz regularity of $\mathcal F$ and Lemma \ref{lem.estid1} to deal with the difference of the $\mathcal F$ and \eqref{lem.estixkyk} to deal with the term in $\sum_i |y^i_0|$, we find
\begin{align*}
0 & \geq  e^{s_0}h(\mathcal U(s_0,m^N_{{\bf y}_0})-\mathcal V^N(t_0,{\bf x}_0)) - C\delta^{-d\beta}(1+ \lambda^{-\frac{1}{2}})\frac{h^{\beta}}{N^{\beta}}-C\lambda^{1/2}h^{1/2} -C\theta h-Ch^2\\
&\qquad  -  e^{s_0+h} \E\big[ \frac{1}{N}\int_{t_0}^{t_0+h} \sum_{k=1}^N ( L(X^k_s, \alpha^k)+ \alpha^k \cdot (ND_{x^k}\mathcal V(s,{\bf X}_s))\\
 &\qquad +H(X^k_s, ND_{x^k}\mathcal V(s,{\bf X}_s))ds) ds \big].
\end{align*}
The regularity of $L$ and $H$ and the uniform boundedness of the $\alpha^k$ and of $ND_{x^k}\mathcal V^N$ allow to  infer that
\begin{align*}
0 & \geq e^{s_0} h(\mathcal U(s_0,m^N_{{\bf y}_0})-\mathcal V^N(t_0, {\bf x}_0)) - C\delta^{-d\beta}(1+ \lambda^{-\frac{1}{2}})\frac{h^{\beta}}{N^{\beta}} -C\lambda^{1/2}h^{1/2}-C\theta h   \\
& -Ch^2 - e^{s_0+h} \E\big[ \frac{1}{N}\int_{t_0}^{t_0+h} \sum_{k=1}^N ( L(x^k_0, \alpha^k)ds+ \alpha^k \cdot (ND_{x^k}\mathcal V(s,{\bf X}_s))\\
& +H(x^k_0, ND_{x^k}\mathcal V(s,{\bf X}_s))) ds \big] -Ch^{3/2}.
\end{align*}
and, in view of \eqref{def.alphaj}, % this implies that
%\begin{align}\label{IneqProp3.8}
\be\label{IneqProp3.8}
\begin{split}
0 & \geq  e^{s_0} h(\mathcal U(s_0,m^N_{{\bf y}_0})-\mathcal V^N(t_0, {\bf x}_0)) - C\delta^{-d\beta}(1+ \lambda^{-\frac{1}{2}})\frac{h^{\beta}}{N^{\beta}}-C\lambda^{1/2}h^{1/2}\\[2mm]
&-- C  \E\left[ \frac{1}{N}\int_{t_0}^{t_0+h} \sum_{k=1}^N |ND_{x^k}\mathcal V^N(s,{\bf X}_s))-ND_{x^k}\mathcal V^N(s,\bx_0))|ds \right]\\[1.2mm]
& -C\theta h -Ch^{3/2} - Ch\delta.
 % \notag
\end{split}
\ee
%\end{align}
The semiconcavity of $\mathcal V^N$ and the penalization by the term in $\theta$ give the next lemma. The proof is postponed to end of the section.

\begin{lem}\label{lem.semiconcTD} For any $(t,\bx)\in [0,T]\times (\R^d)^N$,
\begin{align*}
& \sum_{k=1}^N | D_{x^k}\mathcal V^N(t,\bx)-D_{x^k}\mathcal V^N(t_0,\bx_0)|\\
& \qquad \leq \frac{C}{N}\sum_{k=1}^N |x^k-x^k_0|+  \left( \frac{C}{N\theta} \sum_{k=1}^N (|x^k-x^k_0|+|x^k-x^k_0|^2) \right)^{1/2}+ \frac{C}{\theta^{1/2}}|t-t_0|^{1/2}.
\end{align*}

\end{lem}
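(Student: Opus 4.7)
The plan is to exploit the fact that, by the very definition of $M$ in \eqref{DefinitionofM25Nov}, for fixed $(s_0, \by_0)$ the point $(t_0, \bx_0)$ is a global minimizer of the auxiliary function
\[
\psi(t, \bx) := e^{s_0}\mathcal V^N(t, \bx) + \frac{1}{2\theta N}\sum_{k=1}^N |x^k - y_0^k|^2 + \frac{1}{2\theta}(s_0 - t)^2.
\]
The first-order conditions at this minimum read $D_{x^k}\psi(t_0, \bx_0) = 0$ and $\partial_t \psi(t_0, \bx_0) = 0$; in particular
\[
D_{x^k} \mathcal V^N(t_0, \bx_0) = \frac{e^{-s_0}}{\theta N}(y_0^k - x_0^k), \qquad \partial_t \mathcal V^N(t_0, \bx_0) = \frac{e^{-s_0}}{\theta}(s_0 - t_0),
\]
so that the Lipschitz estimate from Lemma \ref{lem.estiVN} gives $|y_0^k - x_0^k| \leq C\theta$ and $|s_0 - t_0| \leq C\theta$.

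The first key step is to control $0 \leq \psi(t, \bx) - \psi(t_0, \bx_0)$ from above. Since the linear part of the Taylor expansion of $\psi$ about $(t_0, \bx_0)$ vanishes by the first-order conditions, the semi-concavity estimate \eqref{estisemiconc} applied to the bracket $\mathcal V^N(t, \bx) - \mathcal V^N(t_0, \bx_0) - D\mathcal V^N(t_0, \bx_0) \cdot ((t,\bx) - (t_0,\bx_0))$ combined with the exact quadratic expansion of the penalty yields a pure quadratic bound $\tfrac{C}{N\theta}\sum_k|x^k - x_0^k|^2 + \tfrac{C}{\theta}(t - t_0)^2$; blending this with the Lipschitz estimates of Lemma \ref{lem.estiVN} and using $|y_0^k - x_0^k|, |s_0 - t_0| \leq C\theta$ to absorb the cross terms gives the cleaner mixed bound
\[
\psi(t, \bx) - \psi(t_0, \bx_0) \leq C|t - t_0| + \frac{C}{N\theta}\sum_{k=1}^N \bigl(|x^k - x_0^k| + |x^k - x_0^k|^2\bigr).
\]

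The second key step is a Polyak--{\L}ojasiewicz-type inequality for $\psi$. Testing the minimum property with a spatial perturbation $\psi(t, \bx + \epsilon \eta) \geq \psi(t_0, \bx_0)$ and applying the semi-concavity bound \eqref{estisemiconc} to $\mathcal V^N$ at $(t, \bx)$ together with the exact expansion of the quadratic penalty leads to
\[
\psi(t, \bx + \epsilon\eta) - \psi(t, \bx) \leq \epsilon\, D_{\bx}\psi(t, \bx) \cdot \eta + \frac{C\epsilon^2}{N\theta}\sum_{k=1}^N |\eta^k|^2.
\]
Choosing $\eta = -D_{\bx}\psi(t, \bx)$ and optimizing the resulting quadratic in $\epsilon$, then using $\psi(t, \bx + \epsilon\eta) \geq \psi(t_0, \bx_0)$, yields
\[
\sum_{k=1}^N |D_{x^k}\psi(t, \bx)|^2 \leq \frac{C}{N\theta}\bigl(\psi(t, \bx) - \psi(t_0, \bx_0)\bigr).
\]

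Finally, I will combine the two previous steps with the identity
\[
D_{x^k}\mathcal V^N(t, \bx) - D_{x^k}\mathcal V^N(t_0, \bx_0) = e^{-s_0}\bigl[D_{x^k}\psi(t, \bx) - \tfrac{1}{\theta N}(x^k - x_0^k)\bigr]
\]
and the Cauchy--Schwarz inequality $\sum_k |a^k| \leq \sqrt{N}(\sum_k |a^k|^2)^{1/2}$. Extracting first the direct contribution $\tfrac{e^{-s_0}}{\theta N}(x^k - x_0^k)$ via the concave-correction decomposition $\mathcal V^N = W + \Pi$ with $\Pi = \tfrac{C}{2N}\sum_k |x^k|^2 + \tfrac{C}{2}t^2$ accounts for the $\tfrac{C}{N}\sum_k |x^k - x_0^k|$ term; the remaining $\sqrt{N}\,(\sum_k |D_{x^k}\psi(t, \bx)|^2)^{1/2}$ is bounded by invoking Steps 1 and 2 and the subadditivity $\sqrt{a + b} \leq \sqrt{a} + \sqrt{b}$, producing both the square-root term in $\sum_k(|x^k - x_0^k| + |x^k - x_0^k|^2)$ and the $\tfrac{C}{\theta^{1/2}}|t - t_0|^{1/2}$ term. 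The main technical difficulty lies in Step~1: one must keep the Lipschitz-in-time contribution $C|t - t_0|$ separate from the pure quadratic spatial contribution, rather than collapsing the whole bound into a quadratic expression, so that the PL inequality yields a half-power $|t - t_0|^{1/2}$ on the right-hand side rather than a stronger linear dependence.
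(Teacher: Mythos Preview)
Your Step~1 bound is not attainable, and this is where the argument breaks. Regardless of any Lipschitz estimate on $\mathcal V^N$, the penalty term in $\psi$ contributes
\[
\frac{1}{2\theta}\bigl[(s_0-t)^2-(s_0-t_0)^2\bigr]
=\frac{1}{\theta}(t_0-s_0)(t-t_0)+\frac{1}{2\theta}(t-t_0)^2,
\]
and while the cross term is indeed $O(|t-t_0|)$ because $|s_0-t_0|\le C\theta$, the pure quadratic $\frac{1}{2\theta}(t-t_0)^2$ cannot be absorbed into $C|t-t_0|$ for all $t$. The same remark applies to the spatial penalty: it produces $\frac{1}{2\theta N}\sum_k|x^k-x_0^k|^2$, which is already one of the terms you wrote, so that part is fine. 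With the correct bound
\[
\psi(t,\bx)-\psi(t_0,\bx_0)\le \frac{C}{N\theta}\sum_k|x^k-x_0^k|^2+\frac{C}{\theta}(t-t_0)^2+\text{lower order},
\]
your PL inequality gives
$\sum_k|D_{x^k}\psi|^2\le \frac{C}{N\theta}\bigl(\psi-\psi\bigr)$,
hence after Cauchy--Schwarz
\[
\sum_k|D_{x^k}\psi|\le \Bigl(\frac{C}{N\theta^{2}}\sum_k|x^k-x_0^k|^2\Bigr)^{1/2}+\frac{C}{\theta}|t-t_0|,
\]
which is off by a factor $\theta^{-1/2}$ in both terms compared to the lemma's claim.

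The missing idea is that one should not pass through $\psi(t,\bx)-\psi(t_0,\bx_0)$ at all. The paper instead works with the convex function
\[
w(t,\bx)=\mathcal V^N(t_0,\bx_0)-\mathcal V^N(t,\bx)+\sum_k p^k\!\cdot(x^k-x_0^k)+p^t(t-t_0)+\tfrac{C}{N}\sum_k|x^k-x_0^k|^2+C(t-t_0)^2,
\]
uses convexity of $w$ together with the upper bound $w(s,\by)\le \frac{1}{\theta N}\sum_k|y^k-x_0^k|^2+\frac{1}{\theta}(s-t_0)^2$, and plugs in the explicit test point $y^k=x_0^k+\frac{\theta N}{2}D_{x^k}w(t,\bx)$, $s=t_0+\frac{\theta}{2}\partial_t w(t,\bx)$ to obtain
\[
\frac{\theta N}{4}\sum_k|D_{x^k}w|^2\le \sum_k D_{x^k}w\cdot(x^k-x_0^k)+\partial_t w\,(t-t_0).
\]
The crucial step is to bound the right-hand side \emph{pointwise} via $|D_{x^k}w|\le CN^{-1}(1+|x^k-x_0^k|)$ and $|\partial_t w|\le C$ (both from Lemma~\ref{lem.estiVN}), rather than via $\psi-\psi$; this yields a single power of $\theta^{-1}$ in the bound for $\sum_k|D_{x^k}w|^2$ and hence the correct $\theta^{-1/2}$ after taking square roots. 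Your PL route necessarily picks up an extra $\theta^{-1}$ from the penalty inside $\psi-\psi$ and cannot recover the stated estimate.
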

\vs
We continue with the ongoing proof. Inserting the estimate of Lemma~\ref{lem.semiconcTD} in \eqref{IneqProp3.8}, we obtain
\begin{align*}
&0  \geq  e^{s_0}h(\mathcal U(s_0,m^N_{{\bf y}_0})-\mathcal V^N(t_0,{\bf x}_0)) - C\big(\delta^{-d\beta}(1+ \lambda^{-\frac{1}{2}})\frac{h^{\beta}}{N^{\beta}} + \lambda^{1/2}h^{1/2}\\
& +(\theta+ \delta) h +h^{3/2}\big)\\[1.5mm]
&  - C  \E \int_{t_0}^t (\frac{1}{N}\sum_{k=1}^N |X^k_s-x^k_0| + \big( \frac{1}{N\theta} \sum_{k=1}^N (|X^k_s-x^k_0|+|X^k_s-x^k_0|^2)\big)^{1/2}\\
& +\frac{C}{\theta^{1/2}}|s-t_0|^{1/2} )ds \\[1.5mm]
& \geq  e^{s_0}h (\mathcal U(s_0,m^N_{{\bf y}_0})-\mathcal V^N(t_0,{\bf x}_0)) -C\delta^{-d\beta}(1+ \lambda^{-\frac{1}{2}})\frac{h^{\beta}}{N^{\beta}}\\[1.5mm]
& \qquad \qquad -C(\theta +\delta)h  -C\lambda^{1/2}h^{1/2} - C\theta^{-1/2}h(h^{1/2}+h )^{1/2}.
\end{align*}
Dividing by $h$ we find, for each choice of $\theta,\lambda, \delta > 0$ and $0 < h \leq (T - s_0) \wedge (T- t_0)$,  
%the estimate
\be\notag
\begin{split}
&e^{s_0}(\mathcal{U}(s_0,m^N_{{\bf y}_0}) - \mathcal{V}^N(t_0,{\bf x}_0)) \leq C\frac{h^{\beta-1}}{N^{\beta}\delta^{d\beta}}(1+ \lambda^{-1/2}) + C(\theta + \delta) + C \lambda^{1/2}h^{-1/2} \\
&\hskip.8in + Ch^{1/4}\theta^{-1/2}.
\end{split}
\ee

%\vs
We take $\theta = h^{\alpha_1},  \ \ \delta = (\frac{\lambda^{-1/2}h^{\beta -1}}{N^{\beta}})^{\alpha_2},  \ \ \lambda = N^{-\alpha_3} \ \ \text{ and} \ \  h= N^{-\alpha_4}.$

Making  appropriate choices of $\alpha_1$, $\alpha_2$, $\alpha_3$ and $\alpha_4$ we deduce 
\begin{align} \label{lastest}
    e^{s_0}(\mathcal{U}(s_0,m^N_{{\bf y}_0}) - \mathcal{V}^N(t_0,{\bf x}_0)) \leq CN^{-\tilde{\beta}}
\end{align}
for some $\tilde{\beta} = \tilde{\beta}(\beta) \in (0,1/2)$ and for  $N$ such that $h = N^{-\alpha_4} \leq (T- s_0) \wedge (T-t_0)$.
%\vs

For $N$ such that $h = N^{-\alpha_4} \geq (T- s_0) \wedge (T-t_0)$, we have by \eqref{lem.estixkyk} that $(T- s_0) \vee (T -s_0) \leq h + C\theta$,
and, so, using Lemma \ref{lem.estiVN} and Lemma \ref{It'^o}, we find
\begin{align*}
  & |\mathcal{U}(s_0,m^N_{{\bf y}_0}) - \mathcal{V}^N(t_0,{\bf x}_0)| \leq |\mathcal{U}(s_0,m^N_{{\bf y}_0}) - \mathcal{G}(m^N_{{\bf y}_0})| + |\mathcal{G}(m^N_{{\bf y}_0}) - \mathcal{G}(m^N_{{\bf x}_0})| \\
   &+ | \mathcal{G}(m^N_{{\bf x}_0}) - \mathcal{V}^N(t_0,{\bf x}_0)|
   \leq C (h + \theta)^{1/2} + C \theta + C (h + \theta) \leq C N^{-\tilde{\beta}},
\end{align*}
where in the last line we choose $\tilde{\beta}$ even smaller if necessary. With this choice of $\tilde{\beta}$, we have now established that \eqref{lastest} holds for all values of $N$.
%\vs

Finally, we conclude that, for all $(t,{\bf x}) \in [0,T] \times (\R^d)^N$,
\begin{align*}
e^t (\mathcal{U}(t,m^N_{{\bf x}}) - \mathcal{V}^N(t,{\bf x})) &\leq e^{s_0}(\mathcal{U}(s_0,m^N_{{\bf y}_0}) - \mathcal{V}^N(t_0,{\bf x}_0)) + \frac{\lambda}{2N} \sum_{i=1}^N|x^{i}|^2 \\
& \leq C N^{-\min (\tilde{\beta},\alpha_3)}(1 + \frac{1}{N} \sum_{i=1}^N|x^{i}|^2).
\end{align*}

\end{proof}

Before proving the various lemmas used in the proof of Proposition \ref{prop.hardineq}, we complete the proof of the main result.

\begin{proof}[Proof of Theorem \ref{thm.main1}] Combining Proposition ~\ref{lem.ineqeasy} and  Proposition~\ref{prop.hardineq} we know that there exist $\beta\in (0,1]$ depending on dimension and $C>0$ depending on the data such that, for any $(t,\bx)\in [0,T]\times (\R^d)^N$,
$$
\left| \mathcal U(t,m^N_{{\bf x}}) -{\mathcal V}^N(t,{\bf x})) \right| \leq C N^{-\beta}(1+ M_2^{1/2}(m^N_{\bx})+ M_2(m^N_{\bx})) \leq C N^{-\beta}(1+ M_2(m^N_{\bx})).
$$
\end{proof}
We continue with the proofs of the several auxiliary results sated earlier.
\vs

%\begin{proof}[Proof of Lemma \ref{lem.estixkyk}] The proof of the first statement is an immediate consequence of the uniform bound on $\mathcal U$ and $\mathcal V^N$ and of the Lipschitz estimate for $\mathcal V^N$.
%\end{proof}

\begin{proof}[Proof of Lemma \ref{lem.PDD}]
%\color{blue}Let $A^1,...,A^N$ be an $\sF_{s_0}$-measurable partition of $\Omega$, with $\bP[A^i] = \frac{1}{N}$. For $j = 1,...,K$, let $\bar{A}^j$ denote $\cup_{j \in C^j} A^j$. Define $\bar{Y}_{s_0}$ to be the random variable $
%    \bar{Y}_{s_0} = \sum_{i = 1}^N y_0^i.$
%Define the control $\alpha$ on $[s_0,T]$ by
%$\alpha = \sum_{j = 1}^J 1_{\bar{A}^j} \bar{\alpha}^j$, and $Y^{\alpha}_s = \bar{Y}_{s_0} + \alpha (s - s_0) + (B_s - B_{s_0})$. Now we need only note that
%\begin{align*}
%    \int_{s_0}^{s_0+h} (\sum_{j=1}^J \int_{\R^d}N^{-1}n^jL(x,\bar\alpha^j) m^j(s,x)dx +\mathcal F(m(s)))ds = \bE[\int_{s_0}^{s_0 + h}\big( L(Y^{\alpha}_s,\alpha_s) + \sF(\sL_{Y^{\alpha}_s}) ds \big)]
%\end{align*}
%and $m(s_0+h) = \sL_{Y^{\alpha}_{s_0 + h}}$, so that we \color{red}can conclude by the dynamic programming principle \color{black}.
For  $K\in \N$ and  any  nonnegative  integrable functions \\
$m^1_0, \dots, m^K_0$  on $\R^d$ such that $\sum_{k=1}^K m^k_0 \in \sP(\R^d)$, let
\begin{align*}
\mathcal U^K(t_0,m^1_0, \dots, m^K_0) & :=\inf_{(m^1,\beta^1), \dots, (m^K,\beta^K)} \int_{t_0}^{T} ( \int_{\R^d} \sum_{k=1}^K L(x,\frac{\beta^k(t,x)}{m^k(t,dx)})m^k(t,x)dx\\
& +\mathcal F(\sum_{k=1}^K m^k(t)) )dt
 +\mathcal G(\sum_{k=1}^K m^k(T)),
\end{align*}
where the infimum is taken over the tuple of measures $(m^k,\beta^k)$ (the $\beta^k$ being a vector measure) with $\beta^k<<m^k$ such that  $(m^k,\beta^k)$ solve in the sense of distributions,
$$
\partial_t m^k -\Delta m^k +{\rm div}(\beta^k)=0 \ \ \text{in} \ \ (t_0,T]\times \R^d \ \ \text{and} \ \  m^k(t_0)= m^k_0 \ \ \text{in} \ \ \R^d.
$$
We establish next that
$\mathcal U^K(t_0,m^1_0, \dots, m^K_0) = \mathcal U(t_0,m^1_0+ \dots + m^K_0),
$
and the result will then follow from Proposition \ref{pro.dppnocommon}.
%\vs

Since obviously  $\mathcal U^K(t_0,m^1_0, \dots, m^K_0) \leq \mathcal U(t_0,m^1_0+ \dots + m^K_0)$, next we concentrate on the reverse  inequality.
\vs

Fix $\ep>0$, let $(m^1,\beta^1, \dots, m^K,\beta^K)$ be $\ep-$optimal for $\mathcal U^K(t_0,m^1_0, \dots, m^K_0)$, and set $\beta = \sum_{k=1}^N\beta ^K$ and $m(t)= \sum_{k=1}^N m^k(t)$. Then $(m,\beta)$ solves
$$
\partial_t m -\Delta m +{\rm div}(\beta)=0  \ \ \text{in} \ \ (t_0,T]\times \R^d \ \ \text{and} \ \  m(t_0)= m_0 \ \ \text{in} \ \ \R^d.
$$
and we have
\begin{align*}
& \ep+ \mathcal U^K(t_0,m^1_0, \dots, m^K_0) \\[1.5mm]
& \geq \int_{t_0}^{T} ( \int_{\R^d} \sum_{k=1}^K L(x,\frac{\beta^k(t,x)}{m^k(t,x)})\frac{m^k(t,x)}{m(t,x)}m(t,x)dx+\mathcal F(\sum_{k=1}^K m^k(t)) )dt
+\mathcal G(\sum_{k=1}^K m^k(T)) \\[1.5mm]
& \geq \int_{t_0}^{T} ( \int_{\R^d}  L(x,\frac{\sum_{k=1}^K \beta^k(t,x)}{m(t,x)})m(t,x)dx+\mathcal F(m(t)) )dt +\mathcal G(m(T)) \\[1.5mm]
&\qquad  \geq \mathcal U(t_0,m_0),
\end{align*}
where the second inequality follows from the  convexity of the map $(\beta, m)\to m L(x,\dfrac{\beta}{m})$ and the third one by the definition of $\mathcal U$.
\end{proof}

\begin{proof}[Proof of Lemma \ref{lem.semiconcTD}] Set $p^k =  D_{x^k}\mathcal V(t_0,\bx_0)$ and $p^t= \partial_t \mathcal V(t_0,\bx_0)$. Then, in view of  Lemma \ref{lem.semiconcesti}, we have,
%setting $p^k =  D_{x^k}\mathcal V(t_0,\bx_0)$ and $p^t= \partial_t \mathcal V(t_0,\bx_0)$ and
for any $(t,\bx), (t_0,\bx_0)\in [0,T]\times (\R^d)^N$,
$$
\mathcal V^N(t,\bx) -\mathcal V^N(t_0, \bx_0) - \sum_{k=1}^N p^k\cdot (x^k-x^k_0) -p^t(t-t_0) \leq  \frac{C}{N} \sum_{k=1}^N |x^k-x^k_0|^2+ C(t-t_0)^2.
$$
The optimality of $(t_0,\bx_0,s_0,\by_0)$ also gives, for any $(t,\bx)$,
\be \label{firstorder}
\begin{split}
& \frac{1}{2\theta N} \sum_{i=1}^N |x^i-y^i_0|^2+\frac{1}{2\theta}(t-s_0)^2+{\mathcal V}^N(t,{\bf x})\\
& \geq
  \frac{1}{2\theta N} \sum_{i=1}^N |x^i_0-y^i_0|^2+\frac{1}{2\theta}(t_0-s_0)^2+{\mathcal V}^N(t_0,\bx_0).
\end{split}
\ee
From \eqref{firstorder}, we conclude that
$p^k=\dfrac{y^k_0-x^k_0}{\theta N} \ \ \text{and} \ \ p^t= \dfrac{s_0-t_0}{\theta}.$
%\vs

Furthermore, rearranging \eqref{firstorder} yields
\begin{align*}
& \mathcal V^N(t,x) - \mathcal V^N(t_0,x_0) \geq \frac{1}{2 \theta N} \sum_{k=1}^N |x_0^k - y_0^k|^2 - \frac{1}{2 \theta N} \sum_{k=1}^N |x^k - y_0^k|^2 + \frac{1}{2 \theta N} |t_0 - s_0|^2\\
& - \frac{1}{2 \theta N} |t - s_0|^2
= \frac{1}{2 \theta N} \sum_{k=1}^N |x_0^k - y^k_0|^2 - \frac{1}{2 \theta N} - \sum_{k=1}^N| (x^k - x^k_0) + (x^k_0 - y^k_0)|^2\\ &
 + \frac{1}{2 \theta N} |t_0 - s_0|^2
- \frac{1}{2 \theta N} |(t - t_0) + (t_0 - s_0)|^2 \\
&= \sum_{k=1}^N p^k \cdot (x^k -x^k_0) + p^t (t - t_0) - \sum_{k=1}^N \frac{1}{2 \theta N} |x^k - x^k_0|^2 - \frac{1}{2 \theta} (t - t_0)^2.
\end{align*}
and, after some elementary manipulations,
$$
\mathcal V^N(t,\bx) -\mathcal V^N(t_0,\bx_0) - \sum_{k=1}^N p^k\cdot (x^k-x^k_0) -p^t(t-t_0) \geq  - \frac{1}{2\theta N} \sum_{k=1}^N |x^k-x^k_0|^2-\frac{1}{2\theta} (t-t_0)^2.
$$
Assuming that $\theta\leq (2C)^{-1}$, it follows that
\be\notag
\begin{split}
& w(t,\bx)= \mathcal V^N(t_0,\bx_0) - \mathcal V^N(t,\bx)\\ & + \sum_{k=1}^N p^k\cdot (x^k-x^k_0) +p^t(t-t_0)+ \frac{C}{N} \sum_{k=1}^N |x^k-x^k_0|^2+ C(t-t_0)^2
\end{split}
\ee
is convex and satisfies
$
0\leq w(t,\bx) \leq  \frac{1}{\theta N} \sum_{k=1}^N |x^k-x^k_0|^2+ \frac{1}{\theta}(t-t_0)^2.
$
%(where we have assumed to fix the ideas that $\theta\leq (2C)^{-1}$).
Thus, for any $(t,\bx)$ and any $(s,\by)$, we have
\begin{align*}
& \sum_{k=1}^N D_{x^k}w(t,\bx)\cdot (y^k-x^k)+\partial_tw(t,\bx)(s-t) \leq w(t,\bx)\\
& +\sum_{k=1}^N D_{x^k}w(t,\bx)\cdot (y^k-x^k)+\partial_tw(t,\bx)(s-t) \\
&\qquad \leq w(s, \by) \leq   \frac{1}{\theta N} \sum_{k=1}^N |y^k-x^k_0|^2+\frac{1}{\theta}(s-t_0)^2.
\end{align*}
Letting  $y^k= x^k_0+\frac{1}{2}\theta ND_{x^k}w(t,\bx)$ and $s= t_0+ \frac{1}{2}\theta \partial_t w(t,x)$ in the inequality above, we  obtain
\be
\frac{\theta N}{4} \sum_{k = 1}^N \abs{ D_{x^k} w(t,\bx)}^2 \leq \sum_{k=1}^N D_{x^k}w(t,\bx) \cdot (x^k - x^k_0) + \partial_t w(t,x)(t - t_0),
\ee
and, after using the Cauchy-Schwarz inequality, % we get therefore
\begin{align}
& \sum_{k=1}^N | D_{x^k}w(t,\bx)|\leq N^{1/2}\left(\sum_{k=1}^N | D_{x^k}w(t,\bx)|^2\right)^{1/2}\label{qlksjfdrfgbk}\\
&\qquad  \leq
 N^{1/2} \left( \frac{4}{N\theta} \sum_{k=1}^N |x^k_0-x^k| |D_{x^k}w(t,\bx)|+ \frac{4}{N\theta}|\partial_t w(t,x)||t-t_0|\right)^{1/2}.\notag
\end{align}
Recalling the definition of $w$ and that   $|D_{x^k}\mathcal V^N|\leq C/N$ and $|\partial_t \mathcal V^N|\leq C$, we find
$$
|D_{x^k}w(t,\bx)|= | - D_{x^k} \mathcal V^N(t,\bx)+ p^k+\frac{2C}{N} (x^k-x^k_0)|\leq CN^{-1}+\frac{2C}{N} |x^k-x^k_0|
$$
and
$$
|\partial_t w(t,x)|=| -\partial_t\mathcal V^N(t_0, \bx_0) +2 C(t-t_0)| \leq C.
$$
Returning  to \eqref{qlksjfdrfgbk},  we have
\be\notag
\begin{split}
& \sum_{k=1}^N | - D_{x^k} \mathcal V^N(t,\bx)+ p^k+\frac{2C}{N} (x^k-x^k_0)|\\ & \leq
  \left( \frac{C}{N\theta} \sum_{k=1}^N |x^k_0-x^k|+ \frac{C}{N\theta} \sum_{k=1}^N |x^k_0-x^k|^2+ \frac{C}{\theta}|t-t_0|\right)^{1/2},
\end{split}
\ee
from which we deduce the result by the definition of $p^k$.
 \end{proof}

\subsection{A concentration inequality} \label{subsec.ProofLemma}

This section is devoted to the proof of the following concentration inequality.

\begin{prop}
Take a constant drift $\alpha$ in $\R^d$, initial position $y_0^1,\dots,y_0^N$ in $\R^d$ for some $N \geq 1$ and consider $(Y_t^1)_{t\geq 0},\dots (Y_t^N)_{t\geq 0}$ defined, for all $1 \leq k \leq N$ and $t \geq 0$ by

$$Y^k_t = \alpha t + \sqrt{2}B^k_t,$$
where $(B_t^1)_{t\geq 0},\dots (B_t^n)_{t\geq 0}$ are $N$ independent d-dimensional standard Brownian motions defined on some probability space $(\Omega, \mathcal{F}, \mathbb{P})$. Define as well the empirical measure $\displaystyle m^N_{\bf{Y}_t} := \frac{1}{N}\sum_{k=1}^N \delta_{Y_t^k}$ and $m \in \mathcal{C}([0,T], \mathcal{P}_2(\R^d))$ to be the solution to 

\begin{equation}
\left \{
\begin{array}{ll}
\partial_t m + \alpha.Dm -\Delta m = 0 &\mbox{in } (0,+\infty) \times \R^d \\
m(0) = m^N_{\bf{Y}_0}.
\end{array}
\right.
\end{equation}
Then, there exists a positive constant $\beta \in (0,1/2)$ depending on the dimension $d$ and a positive constant $C$, depending on $|\alpha|$, $d$ and $T$ such that, for all $h \in [0,T]$ it holds

\begin{equation}
\E \left[ {\bf d}_1 (m(h), m^N_{{\bf Y}_h}) \right] \leq C( 1+ M_2^{1/2}(m^N_{\bf{Y}_0})) (h/N)^{\beta}.
\label{ConcentrationIneqGeneralFormulation}
\end{equation}

\label{ConcentrationInequalityGeneralCase}
\end{prop}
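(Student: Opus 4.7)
The plan is to reduce to a clean Gaussian setup and then invoke Kantorovich--Rubinstein duality together with Gaussian concentration for Lipschitz functionals and a Fournier--Guillin-type covering argument. Since ${\bf d}_1$ is invariant under simultaneous translation of both arguments by $-\alpha h$, one may assume $\alpha = 0$, so that $Y^k_h = y_0^k + \sqrt{2} B^k_h$ are independent with $Y_h^k \sim \mathcal N(y_0^k,2h I_d)$ and $m(h) = \frac{1}{N}\sum_{k=1}^N \mathcal L(Y_h^k)$.

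By Kantorovich--Rubinstein duality,
$${\bf d}_1(m(h), m^N_{{\bf Y}_h}) = \sup_{\phi \in {\bf L}} \; \frac{1}{N}\sum_{k=1}^N \bigl(\E[\phi(Y_h^k)] - \phi(Y_h^k)\bigr),$$
and since $m(h) - m^N_{{\bf Y}_h}$ has zero total mass, the supremum can be restricted to $\phi \in {\bf L}$ with $\phi(0)=0$, which then satisfy $|\phi(y)| \le |y|$. For each fixed such $\phi$, the classical Gaussian concentration inequality implies that $\phi(Y_h^k) - \E[\phi(Y_h^k)]$ is centered and sub-Gaussian with parameter $\sqrt{2h}$; combined with independence across $k$, a Hoeffding-type bound yields, for every $t>0$, $\bP\bigl(|\tfrac{1}{N}\sum_k[\phi(Y^k_h) - \E\phi(Y^k_h)]|>t\bigr) \le 2\exp(-cNt^2/h)$.

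To pass from this pointwise-in-$\phi$ estimate to a bound on the supremum, I would truncate to a ball $B_R$ and use a covering argument. The class of 1-Lipschitz functions on $B_R$ vanishing at the origin admits an $\epsilon$-net in the uniform norm of cardinality at most $\exp(C(R/\epsilon)^d)$; a union bound combined with the Hoeffding estimate above controls the supremum over this net by a quantity of order $\sqrt{(h/N)(R/\epsilon)^d} + \epsilon$. The contribution of $\R^d \setminus B_R$ is handled via $|\phi(y)| \le |y|$ and the moment bound $\E|Y^k_h|^2 \le 2|y_0^k|^2 + 4hd$, producing an error that decays as a power of $1/R$ with a multiplicative factor involving $M_2(m^N_{{\bf Y}_0}) + h$. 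Choosing $R$ and $\epsilon$ as suitable polynomial functions of $N$ (and absorbing $T$-dependent constants into $C$ using $h \le T$) yields an estimate of the form $C(1 + M_2^{1/2}(m^N_{{\bf Y}_0}))(h/N)^\beta$ for some $\beta \in (0, 1/2)$ depending on $d$.

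The main technical obstacle is the balancing in the covering step: the $\epsilon$-covering number of 1-Lipschitz functions on $B_R \subset \R^d$ grows like $\exp((R/\epsilon)^d)$, so only a fairly coarse net is affordable, and this both limits the attainable exponent $\beta$ and forces the moment dependence in the bound. This optimization is essentially the one behind the standard Fournier--Guillin rate for empirical Wasserstein distances \cite{FoGu}; the adaptation to the present setting is straightforward because the laws of $Y^k_h$, although not identical, share the common covariance $2h I_d$, so the Gaussian concentration parameter $\sqrt{2h}$ is uniform in $k$, and the Brownian scale $\sqrt{h}$ is exactly what produces the $h^\beta$ factor in the final bound.
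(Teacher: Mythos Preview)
Your proposal is correct and follows essentially the same approach as the paper: Kantorovich--Rubinstein duality, a sub-Gaussian tail bound for each fixed test function, an $\epsilon$-net covering of ${\bf L}_R$ with metric entropy $\exp(C(R/\epsilon)^d)$, a second-moment bound for the contribution outside $B_R$, and optimization in $R$ and $\epsilon$ (the paper obtains $\beta = \tfrac{1}{2(d+2)}$). The only minor difference is in the pointwise step: the paper represents $\int \phi\,(m(h)-m^N_{{\bf Y}_h})$ via the backward heat equation and It\^o's formula as an average of stochastic integrals and then applies a time-change argument, whereas you invoke Gaussian concentration for the Lipschitz functional $\phi(Y^k_h)$ directly after reducing to $\alpha=0$; both yield the same Hoeffding bound $\exp(-cNt^2/h)$.
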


%To complete the proof of Proposition \ref{prop.hardineq}, we are need  to show Lemma \ref{lem.estid1}. For this, it is convenient to introduce a few more facts.
To prove Proposition \ref{ConcentrationInequalityGeneralCase}, it is convenient to introduce first a few facts and notations.

We denote by ${\bf L}$ denote the set of all 1-Lipschitz functions from $\R^d$ to $\R$, and let ${\bf L}_R$ be the set of all 1-Lipschitz functions $\phi : B_R \subset \R^d \to [-R,R]$.
For any $\phi \in {\bf L}_R$, we denote by $\tilde{\phi}$ the extension $\tilde{\phi} : \R^d \to [-R,R]$ given by
\begin{align*}
    \tilde{\phi}(x) = \begin{cases}
    \phi(x) & |x| \leq R, \\
    \frac{2R -|x|}{R} \phi(\frac{R}{|x|}x) & R < |x| < 2R, \\
    0 & |x| \geq 2R.
    \end{cases}
\end{align*}
Note that $\tilde{\phi}$ is also $1$-Lipschitz.

Let $\mathcal{L}(\epsilon, R)$ be the $\epsilon$-covering number of ${\bf L}_R$ with respect to the $\linf$-distance, that is,
\begin{align*}
    \mathcal{L}(\epsilon,R) = \inf \{k \in \N: \text{ there exist } \phi_1,...,\phi_k \in {\bf L}_R \text{ such that for all } \\
    \phi \in {\bf L}_R, \norm{\phi - \phi_j}_{\linf} < \epsilon \text{ for some } j\}.
\end{align*}
It is known (see, for example, \cite{kol61}) that
\begin{align} \label{covnumber}
    \mathcal{L}(\epsilon, 1) \leq \exp\{C\epsilon^{-d}\},
\end{align}
and, after a rescaling argument,
\begin{align} \label{rescaledcovnumber}
    \mathcal{L}(\epsilon, R) \leq \exp\{C\big(\frac{R}{\epsilon}\big)^d\}.
\end{align}
Indeed, if $\{\phi_1,...,\phi_n\}\in \bf L$ is $\epsilon/R$-dense in $\bf L$, then $\{\tilde{\phi}_1,...,\tilde{\phi}_n\}$ is $\epsilon$-dense in ${\bf L}_R$, where $\tilde{\phi}_i(x) = R \phi ({x}/{R})$. Thus \eqref{rescaledcovnumber} follows from \eqref{covnumber}.
%\vs

To prove Proposition \ref{ConcentrationInequalityGeneralCase} we need two preliminary estimates.

%and note that, without loss of generality,  we can take $t_0=0$ in what follows. Finally,
we recall the notation after Lemma~\ref{lem.estixkyk}.

\begin{lem} \label{lem:tailest}
There exists a constant $C>0$ such that, for any $\phi \in {\bf L}$,
%\begin{align*}
   $ \P[ \int_{\R^d} \phi (m(h) - m^N_{Y_h}) > x ] \leq \exp\left\{- \frac{Nx^2}{Ch}\right\}.$
%\end{align*}
\end{lem}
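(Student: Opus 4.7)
The plan is to express the quantity $\int \phi \, d(m(h) - m^N_{Y_h})$ as an empirical average of independent, centered random variables, each of which is a Lipschitz function of a Gaussian vector, and then apply Gaussian concentration together with independence.

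First I would observe that, since $Y^k_h = y_0^k + \alpha h + \sqrt{2} B^k_h$ has law $\mathcal{N}(y_0^k + \alpha h, 2h I_d)$ and $m(h)$ solves the Fokker--Planck equation associated with the drifted heat process started from $m^N_{\bf Y_0} = \tfrac{1}{N}\sum_k \delta_{y_0^k}$, one has the identity
\begin{equation*}
m(h) = \frac{1}{N} \sum_{k=1}^N \mathcal{L}(Y^k_h),
\qquad\text{hence}\qquad
\int_{\R^d} \phi \, dm(h) = \frac{1}{N}\sum_{k=1}^N \E[\phi(Y^k_h)].
\end{equation*}
Consequently,
\begin{equation*}
\int_{\R^d} \phi \, d(m(h) - m^N_{{\bf Y}_h}) = \frac{1}{N}\sum_{k=1}^N \bigl(\E[\phi(Y^k_h)] - \phi(Y^k_h)\bigr) = -\frac{1}{N}\sum_{k=1}^N Z_k,
\end{equation*}
where $Z_k := \phi(Y^k_h) - \E[\phi(Y^k_h)]$ are independent (by independence of the $B^k$) and centered.

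Next, I would use Gaussian concentration. Since $\phi$ is $1$-Lipschitz and $Y^k_h = y_0^k + \alpha h + \sqrt{2} B^k_h$, the map $\omega \mapsto \phi(Y^k_h(\omega))$ is a $\sqrt{2h}$-Lipschitz function of the standard Gaussian vector $B^k_h/\sqrt{h}$. By the Tsirelson--Ibragimov--Sudakov inequality, each $Z_k$ is subgaussian with variance proxy at most $2h$, that is, for every $\lambda \in \R$,
\begin{equation*}
\E[e^{\lambda Z_k}] \leq \exp\!\Bigl( \lambda^2 h \Bigr).
\end{equation*}

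Finally, by independence the moment generating function tensorizes, yielding
\begin{equation*}
\E\!\left[\exp\!\left(\lambda \sum_{k=1}^N Z_k\right)\right] \leq \exp\bigl(\lambda^2 N h\bigr).
\end{equation*}
A standard Chernoff bound (optimizing in $\lambda$) gives, for every $x > 0$,
\begin{equation*}
\P\!\left[\frac{1}{N}\sum_{k=1}^N Z_k > x\right] \leq \exp\!\left(-\frac{Nx^2}{4h}\right),
\end{equation*}
and, since $-\phi$ is also $1$-Lipschitz, the same bound controls $\P[\int \phi\, d(m(h)-m^N_{Y_h}) > x]$, which is the claim with $C = 4$. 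No step here looks genuinely difficult; the only care needed is to identify $m(h)$ with the mixture of the Gaussian laws of the $Y^k_h$ so as to line up the problem with the independent-sum structure required for Gaussian concentration.
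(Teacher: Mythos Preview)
Your proof is correct, and the approach is genuinely different from the paper's. The paper introduces the solution $u$ of the backward equation $-\partial_t u - \Delta u - \alpha\cdot Du = 0$ with terminal datum $\phi$, applies It\^o's formula to obtain the stochastic-integral representation
\[
\int_{\R^d} \phi\,(m(h)-m^N_{{\bf Y}_h}) = -\frac{\sqrt{2}}{N}\sum_{k=1}^N \int_0^h Du(s,Y^k_s)\,dB^k_s,
\]
and then argues that each summand is sub-Gaussian by viewing $h^{-1/2}\int_0^h Du(s,Y^k_s)\,dB^k_s$ as a time-changed Brownian motion evaluated at a stopping time bounded by $1$ (since $\|Du\|_\infty\le 1$), before concluding with Hoeffding's inequality. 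Your route instead exploits the fact that with constant drift the law $m(h)$ is exactly the mixture $\frac1N\sum_k \mathcal L(Y^k_h)$, so the quantity of interest is literally an average of centered i.i.d.\ terms $\phi(Y^k_h)-\E[\phi(Y^k_h)]$, to which the Gaussian concentration inequality applies directly. Your argument is shorter, more elementary, and yields the explicit constant $C=4$; the paper's martingale representation is more dynamic and would transfer more readily to situations (variable drift, non-constant diffusion) where $Y^k_h$ is not explicitly Gaussian, though that extra generality is not needed here.
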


\begin{proof}
Let  $u$ be the solution of
$$
%\left\{\begin{array}{l}
-\partial_t u -\Delta u -\alpha\cdot Du= 0\ \  {\rm in}\ \  (0,h)\times \R^d \ \ \text{and} \ \
u(h)=\phi\ \ {\rm in}\ \  \R^d,
$$
and note that, since $\|D\phi\|\leq 1$,   $\|Du\|_\infty\leq 1$.
%\vs

Using It\^o's formula and the equation for $m$, we get
%\begin{align*}
  $$  \int_{\R^d} \phi (m(h) - m^N_{Y_h}) =-\sqrt{2} \frac{1}{N} \sum_{k =1}^N \int_0^h Du(s,Y_s^k) .dB_s^k.$$
%\end{align*}
The random variables $h^{-1/2}\int_0^h Du(s,Y_s^k) dB_s^k$ are independent and sub-Gaussian, uniformly in $k$. Indeed, viewing $h^{-1/2}\int_0^{\cdot} D u(\cdot, Y^k) dB^k$ as a time-changed Brownian motion, we have that $B_\tau= h^{-1/2}\int_0^h D u(t, Y^k_t) dB^k_t $, where $B$ is a standard Brownian motion and $\tau \leq 1$ is a stopping time (we use here that $\|Du\|_\infty\leq 1$). In particular,
$
\P[\int_0^h Du(s,Y_s^k) dB_s^k > x] \leq \P[\sup_{0 \leq t \leq 1} |B_t| > h^{1/2}x],
$
from which the claim follows easily.
%\vs

We  now apply Hoeffding's inequality (see, for example, Proposition 2.5 in \cite{Wainbook}) to complete the proof.
\end{proof}

\begin{lem} \label{lem:fixedjest}
There exists a constant $C$ such that, for any $R>0$, %we have the estimate
\begin{align*}
    \E[\sup_{ \phi \in {\bf L}_R} \int_{\R^d} \tilde{\phi} \big(m(h) - m^N_{Y_h}\big)] \leq C (1 + R^{\frac{d}{d+2}}) N^{\frac{-1}{d+2}}h^{\frac{1}{d+2}}.
 \end{align*}
\end{lem}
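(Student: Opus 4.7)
The plan is a standard chaining-by-covering argument: I would replace the supremum over the infinite class $\mathbf{L}_R$ by a maximum over a finite $\epsilon$-net, control the discretization error through the $L^\infty$-contractivity of the extension map $\phi \mapsto \tilde\phi$, handle the finite maximum via the sub-Gaussian concentration of Lemma~\ref{lem:tailest}, and then optimize in $\epsilon$.

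More precisely, fix $\epsilon > 0$ and select an $\epsilon$-net $\phi_1,\ldots,\phi_K \in \mathbf{L}_R$ for the $L^\infty(B_R)$-distance; by \eqref{rescaledcovnumber} one may take $K$ with $\log K \leq C(1 + (R/\epsilon)^d)$. A direct inspection of the explicit formula for the extension shows that $\phi \mapsto \tilde \phi$ is $1$-Lipschitz from $(\mathbf{L}_R, \|\cdot\|_{L^\infty(B_R)})$ into $(\mathbf{L},\|\cdot\|_{L^\infty(\R^d)})$: indeed, for $|x|\leq R$ the difference $\tilde\phi - \tilde\psi$ reduces to $\phi - \psi$, for $R < |x| < 2R$ it is contracted by the factor $(2R-|x|)/R \leq 1$, and for $|x|\geq 2R$ it vanishes. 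Hence, for every $\phi \in \mathbf{L}_R$ there exists $j$ with $\|\tilde\phi - \tilde\phi_j\|_{L^\infty(\R^d)}\leq \epsilon$, and since $|m(h) - m^N_{Y_h}|(\R^d)\leq 2$,
\[
\int_{\R^d} \tilde\phi\, d(m(h)-m^N_{Y_h}) \leq 2\epsilon + \int_{\R^d} \tilde\phi_j\, d(m(h)-m^N_{Y_h}).
\]
Taking the supremum over $\phi \in \mathbf{L}_R$ on the left and then expectations yields
\[
\E\Bigl[\sup_{\phi\in \mathbf{L}_R}\int_{\R^d} \tilde\phi\, d(m(h)-m^N_{Y_h})\Bigr] \leq 2\epsilon + \E\Bigl[\max_{1\leq j\leq K} X_j\Bigr],
\]
where $X_j := \int_{\R^d} \tilde\phi_j\, d(m(h)-m^N_{Y_h})$.

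Each $\tilde\phi_j$ is $1$-Lipschitz, so applying Lemma~\ref{lem:tailest} to $\pm\tilde\phi_j$ (both in $\mathbf{L}$) shows that $X_j$ is centered and sub-Gaussian with variance proxy of order $h/N$. The standard maximal inequality for sub-Gaussian variables (e.g.\ Proposition~2.5 of~\cite{Wainbook}) then yields
\[
\E\Bigl[\max_{1\leq j\leq K} X_j\Bigr] \leq C\sqrt{\frac{h\log K}{N}} \leq C\sqrt{\frac{h}{N}}\Bigl(1 + (R/\epsilon)^{d/2}\Bigr),
\]
and combining with the previous display,
\[
\E\Bigl[\sup_{\phi\in\mathbf{L}_R}\int_{\R^d}\tilde\phi\, d(m(h)-m^N_{Y_h})\Bigr] \leq 2\epsilon + C\sqrt{\frac{h}{N}}\Bigl(1 + (R/\epsilon)^{d/2}\Bigr).
\]
The optimal choice $\epsilon := R^{d/(d+2)}(h/N)^{1/(d+2)}$ balances both $R$-dependent contributions at $C R^{d/(d+2)}(h/N)^{1/(d+2)}$; the leftover $\sqrt{h/N}$ term is bounded by $(h/N)^{1/(d+2)}$ in the nontrivial range $h/N\leq 1$ (for $h/N\geq 1$ the conclusion is already implied by the trivial estimate $\E[\sup]\leq 2R$, at the cost of enlarging $C$ by a factor depending on $T$). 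This produces the announced bound $C(1+R^{d/(d+2)}) N^{-1/(d+2)} h^{1/(d+2)}$. The only point requiring care is passing from the one-sided tail bound of Lemma~\ref{lem:tailest} to genuine sub-Gaussianity of the $X_j$, which is however immediate by applying the tail bound to both $\tilde\phi_j$ and $-\tilde\phi_j$; the rest is bookkeeping.
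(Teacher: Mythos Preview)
Your proof is correct and follows the same covering-plus-concentration strategy as the paper; the only cosmetic difference is that you invoke the sub-Gaussian maximal inequality directly, whereas the paper derives a tail bound for the supremum (choosing $\epsilon$ as a function of the tail level $x$) and then integrates it. One small slip: your trivial-bound handling of the regime $h/N\geq 1$ via $\E[\sup]\leq 2R$ does not actually give the stated estimate for large $R$ (since $R^{d/(d+2)}\ll R$), but this regime is irrelevant here because $h\leq T$ and $N\geq 1$ force $h/N\leq T$, so the residual $\sqrt{h/N}$ is bounded by $T^{1/2-1/(d+2)}(h/N)^{1/(d+2)}$ and absorbed into $C$.
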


\begin{proof}
We fix $\epsilon > 0$ and use the estimate on $\mathcal{L}(\epsilon,R)$ to choose $K \leq \exp\{C\big(\frac{R}{\epsilon}\big)^d\}$ and $\phi_1,...,\phi_K$ in ${\bf L}_R$ such that, for each $\phi \in {\bf L}_R$, there exists $k \in \{1,...,K\}$ such that $\norm{\phi - \phi_k}_{\linf(B_R)} < \epsilon$, and hence $\norm{\tilde{\phi} - \tilde{\phi}_k}_{\linf(\R^d)} \leq \epsilon$.
%\vs

Then, using Lemma \ref{lem:tailest} and the upper bound on $K$, for any $x>\epsilon$,  we have
\begin{align} \label{tailest}
    \P[\sup_{\phi \in {\bf L}_R} \int_{\R^d} \tilde{\phi} (m(h) - m^N_{Y_h}) > x] \leq \P[ \,\exists k \,\, \text{ such that } \int_{\R^d} \tilde\phi_k (m(h) - m^N_{Y_h}) > x - \epsilon] \nonumber \\
    \leq \sum_{k = 1}^K \P[ \int_{\R^d} \tilde\phi_k (m(h) - m^N_{Y_h}) > x - \epsilon]
    \leq \exp\left\{C \big(\frac{R}{\epsilon}\big)^d - \frac{N(x - \epsilon)^2}{Ch}\right\}.
 \end{align}
 %where the last line follows from Lemma \ref{lem:tailest} and the upper bound on $K$.
% \vs

 We fix a small positive parameter $\gamma$, and note that, if %we choose
% \begin{align*}
   $  \epsilon = \gamma^{-\frac{1}{d}} R h^{1/d} x^{-2/d} N^{-1/d}$,
 %\end{align*}
 then
 \begin{align} \label{comp}
     R\exp\left\{C \big(\frac{R}{\epsilon}\big)^d - \frac{Nx^2}{Ch}\right\} = R\exp\left\{C \gamma \frac{N x^2}{h} - \frac{N(x- \epsilon)^2}{Ch}\right\}.
 \end{align}
Further computations reveal that there is a constant $C$ such that $x> 2\epsilon$ as soon as
%\begin{align*}
\be\label{takis20}
    x \geq C\dfrac{R^{\frac{d}{d+2}}h^{\frac{1}{d+2}}}{ \gamma^{\frac{1}{d+2}}{N^{\frac{1}{d+2}}}}. %h^{1/(d+2)}.
 \ee
%\end{align*}
By choosing $\gamma$ even smaller, we  deduce, in view of \eqref{tailest} and \eqref{comp},  that, for some constant $C$ and all $R$, $x$ as in \eqref{takis20},
%$x > CR^{\frac{d}{d+2}} (n^j)^{\frac{-1}{d+2}}h^{1/(d+2)}$,
%we have by \eqref{tailest} and \eqref{comp} that
%\begin{align*}
$P[\sup_{\phi \in {\bf L}_R} \int_{\R^d} \tilde{\phi} (m(h) - m_{Y_h}) > x] \leq \exp\left\{ -\frac{N x^2}{Ch}\right\}.$
%\end{align*}
It follows that
\begin{align*}
    \E[\sup_{\phi \in {\bf L}_R} \int_{\R^d} \tilde{\phi} (m(h) - m^N_{Y_{h}})] & \leq \int_{0}^{C(\frac{R^d h}{{N}})^\frac{1}{d+2}}  1 dx + \int_{C(\frac{R^d h}{{N}})^\frac{1}{d+2}}^{\infty} \exp\{ \frac{-Nx^2}{Ch}\} dx\\[2mm]
&     \leq C (1 + R^{\frac{d}{d+2}}) N^{\frac{-1}{d+2}}h^{\frac{1}{d+2}}.
\end{align*}
\end{proof}

Finally, we give the proof of the concentration inequality.

 \begin{proof}[Proof of Proposition \ref{ConcentrationInequalityGeneralCase}]

 Throughout, $C$ is  a positive constant which, although changing from line to line, depends only on $d$, $T$, and $ |\alpha|$.

%Next, we prove \eqref{ineq.concentration} in the case $t_0 = 0$. 
We fix $R > 0$, and note that, any $\psi \in {\bf L}$ normalized with $\psi(0) = 0$, can be written as $\psi = \tilde{\phi} + \varphi$, with  $\phi \in {\bf L}_R$ and $|\varphi| \leq |x| {\bf 1}_{B_R^c}$.
%\vs

Thus,  for any $h\in(0,1]$,  we get
\be\label{maincomp}
\begin{split}
  & \E[{\bf d_1}(m(h),m^N_{\bf Y_h})] = \E[\sup_{\phi \in {\bf L}} \int_{\R^d} \phi (m(h) - m^N_{\bf Y_h})] \nonumber \\
  &  \leq \E[\sup_{\phi \in {\bf L}_R} \int_{\R^d} \tilde{\phi} (m(h) - m^N_{\bf Y_h})] + \int_{\R^d} |x|{\bf 1}_{B_R^c} m(h) + \E[\int_{\R^d} |x| {\bf 1}_{B_R^c} m^N_{\bf Y_h}] \nonumber \\
 &   \leq \E[\sup_{\phi \in {\bf L}_R} \int_{\R^d} \tilde{\phi} (m(h) - m^N_{\bf Y_h})] + \frac{M_2(m(h))}{R} + \frac{\E[M_2(m^N_{\bf Y_h})]}{R} \nonumber \\
 &   \leq \E[\sup_{\phi \in {\bf L}_R} \int_{\R^d}  \tilde{\phi} (m(h) - m^N_{\bf Y_h})] + C\frac{(1 +M_2(m(0)))}{R}.
\end{split}
\ee
Using Lemma \ref{lem:fixedjest}, we find that
\begin{align*}
    \E[{\bf d_1}(m(h),m^N_{Y_h})] \leq C (1 + R^{\frac{d}{d+2}}) N^{\frac{-1}{d+2}}h^{\frac{1}{d+2}}+ C\frac{(1 +M_2(m(0)))}{R} \\
    \leq C (1 + R) N^{\frac{-1}{d+2}}h^{\frac{1}{d+2}}+ C\frac{(1 +M_2(m(0)))}{R} .
\end{align*}
Optimizing in $R$, that is,  taking $R= N^{\frac{1}{2d+4}} h^{-\frac{1}{2d+4}} \sqrt{1 + M_2(m(0))}$, gives the result with $\beta =\frac{1}{2d+4}$.
\end{proof}

\section{The proof of Theorem \ref{thm.main1} with a common noise}\label{sec.CN}

 We now show that the method developed above can be adapted to problems with a common noise, that is, for  $a_0>0$. Recall that $\mathcal V^N$ and $\mathcal U$ are defined  by \eqref{def.VN} and \eqref{def.ucommon} respectively.

\begin{proof}[Proof of Theorem \ref{thm.main1} when $a_0>0$] Since the proof follows  closely  the one in the case $a^0=0$, here we emphasize and explain the main differences.

%\vs
%Then $\mathcal U$ can be written in the form
%$$
%\mathcal U(t_0,m_0)= \inf_{(\tilde m, \tilde \alpha)} \E\left[ \int_{t_0}^T (\int_{\R^d} \tilde L_s(x, \tilde \alpha_s(x))\tilde m_s(dx)+\tilde{\mathcal F}_s(m_s))ds +\tilde{\mathcal G}(m_T)\right],
%$$
%where the pair $(\tilde m,\tilde \alpha)$ solves \eqref{eq.mtcnBIS} and where
%$$
%\tilde L_s(x,a)= L(x+ \sqrt{2a^0}(B^0_s-B^0_{t_0}),a),
%$$
%$$
% \tilde{\mathcal F}_s(m)= \mathcal F((Id+ \sqrt{2a^0}(B^0_s-B^0_{t_0}))\sharp m), \; \tilde{\mathcal G}(m)= \mathcal G((Id+ \sqrt{2a^0}(B^0_T-B^0_{t_0}))\sharp m).
%$$
%\textcolor{red}{Where do we use the previous remark ?}
 We first note that the estimates of Lemma \ref{lem.estiVN} and \ref{lem.semiconcesti} remain valid (with the same proof), that is,  there exists $C>0$ such that
$
\|\mathcal V^N\|_\infty+N \sup_j \|D_{x^j}\mathcal V^N\|_\infty +\|\partial_t \mathcal V^N\|_\infty \leq C,
$
and, for any $(t,\bx)\in  [0,T]\times (\R^d)^N$,  $(\xi^i)_{i=1,\dots, N}\in (\R^d)^N$ and $\xi^0\in \R$,
\be\notag
\begin{split}
&\sum_{i,j=1}^ND^2_{x^ix^j}\mathcal V^N(t,{\bf x})\xi^i\cdot \xi^j  + 2 \sum_{i=1}^N D^2_{x^it}\mathcal V^N(t,\bx) \xi^i\xi^0+ D^2_{tt}\mathcal V^N (t,\bx)(\xi^0)^2\\
&\leq \frac{C}{N}\sum_{i=1}^N |\xi^i|^2+ C(\xi^0)^2.
\end{split}
\ee
We note for later use that the observation above   implies that the conclusion of Lemma \ref{lem.semiconcTD} still holds, because its proof  relies only on the above estimates.

%\vs
However, the proof of Lemma \ref{It'^o} does not adapt to the case $a_0 > 0$. Hence, we need a new argument which relies on some results of \cite{DPT20}.

%\vs

In particular, we have  the following analogue of Lemma \ref{It'^o}.

\begin{lem}\label{It'^ocommon} Assume \eqref{ass.main}.  There exists a constant $C>0$ depending only on the data such that, for all $s,t \in [0,T]$, all $r>2$ and all $m, m' \in \sP_r(\R^d)$
%\begin{align*}
   $| \mathcal{U}(s,m) - \mathcal{U}(t,m') | \leq C \Big( \mathbf{d}_1(m,m') + |t - s|\Big),$
%\end{align*}
%holds for all $0 \leq t \leq T$ and all $m, m' \in \sP_4(\R^d)$.
and, moreover, for any $\epsilon > 0$ and $(t_0,m_0)\in [0,T]\times \sP_2(\R^d)$, there exists  an  $\epsilon$-optimal control rule $\mathcal{R} = (\Omega, \sF, \mathbb{F}, \mathbb{P}, W, m,\alpha) \in \mathcal{A}(t_0,m_0)$ for $\mathcal{U}(t_0,m_0)$ such that
$
\|\alpha\|_\infty\leq C.
$
\end{lem}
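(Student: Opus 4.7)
My plan is to deduce both assertions by transferring uniform estimates on $\mathcal{V}^N$ recalled at the start of this proof to the limit $N\to\infty$, using the convergence $\mathcal{V}^N\to\mathcal{U}$ established in \cite{DPT20}. This approach bypasses the PDE/coupling argument used for Lemma \ref{It'^o}---which relies on the coupled HJB/Fokker--Planck system that is unavailable under common noise---and as a bonus delivers the sharper time regularity $|t-s|$ rather than $|t-s|^{1/2}$, a direct consequence of $\|\partial_t\mathcal{V}^N\|_\infty\leq C$.

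The first step upgrades the pointwise bounds $\|\partial_t\mathcal{V}^N\|_\infty+N\sup_j\|D_{x^j}\mathcal{V}^N\|_\infty\leq C$ into a Lipschitz estimate for $\mathcal{V}^N$ in time and in ${\bf d}_1$ at the level of empirical measures, uniform in $N$. For $(t,\bx),(s,\by)\in[0,T]\times(\R^d)^N$ and any permutation $\sigma$ of $\{1,\ldots,N\}$, the symmetry $\mathcal{V}^N(t,\bx)=\mathcal{V}^N(t,\bx^\sigma)$ and the pointwise bounds give
\begin{equation*}
|\mathcal{V}^N(t,\bx)-\mathcal{V}^N(s,\by)|\leq C|t-s|+\frac{C}{N}\sum_{k=1}^N|x^{\sigma(k)}-y^k|.
\end{equation*}
Since the optimal transport between two uniform $N$-point empirical measures is achieved at a permutation (Birkhoff--von Neumann), minimizing over $\sigma$ produces
\begin{equation*}
|\mathcal{V}^N(t,\bx)-\mathcal{V}^N(s,\by)|\leq C\bigl(|t-s|+{\bf d}_1(m^N_{\bx},m^N_{\by})\bigr),
\end{equation*}
with $C$ independent of $N$. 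For $m,m'\in\sP_r(\R^d)$ with $r>2$, I then select sequences $\bx^N,\by^N\in(\R^d)^N$ with $m^N_{\bx^N}\to m$ and $m^N_{\by^N}\to m'$ in ${\bf d}_2$ with uniformly bounded second moments; the hypothesis $r>2$ combined with Fournier--Guillin type rates of empirical approximation makes such sequences available. The convergence $\mathcal{V}^N(t,\bx^N)\to\mathcal{U}(t,m)$ established in \cite{DPT20}, together with ${\bf d}_1(m^N_{\bx^N},m^N_{\by^N})\to{\bf d}_1(m,m')$, lets me pass to the limit and yields the claimed estimate.

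For the second assertion, I will construct an $\epsilon$-optimal control rule with uniformly bounded $\alpha$ by extracting a limit from the $N$-particle optimal feedbacks. Taking $\bx^N$ with $m^N_{\bx^N}\to m_0$ in ${\bf d}_2$, the optimal feedback $\alpha^{*,k}(t,\bx)=-D_pH(x^k,ND_{x^k}\mathcal{V}^N(t,\bx))$ is uniformly bounded by Remark \ref{rmk.bddcontrol}, and the empirical measure of the corresponding optimal trajectories, together with this feedback, furnishes a control rule with cost $\mathcal{V}^N(t_0,\bx^N)\to\mathcal{U}(t_0,m_0)$ and control bounded by $C$. The main obstacle will be invoking the compactness/stability theory for control rules from \cite{DPT19, DPT20, LSZ20}---the same measure-theoretic machinery needed for Proposition \ref{prop.dpcommon}---to extract a subsequential limit that is $\mathcal{U}$-admissible and inherits both the cost $\mathcal{U}(t_0,m_0)$ and the uniform $L^\infty$ bound on $\alpha$; this step is relegated to the Appendix.
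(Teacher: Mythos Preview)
Your approach to the first assertion is essentially identical to the paper's: both deduce the regularity of $\mathcal U$ by passing the uniform-in-$N$ Lipschitz estimates on $\mathcal V^N$ through the convergence $\mathcal V^N\to\mathcal U$ supplied by \cite{DPT20}. Your Birkhoff--von Neumann step making the $\mathbf d_1$-Lipschitz property explicit is a nice detail the paper skips. One technical caveat: the results of \cite{DPT20} apply under a coercivity hypothesis on the cost, which the paper enforces by truncating the control to a ball $B_R$ and working with $\mathcal V^{N,R}$, $\mathcal U^R$; the identification $\mathcal V^{N,R_0}=\mathcal V^N$ and $\mathcal U^{R_0}=\mathcal U$ (Lemma~\ref{lem.larger}) is what actually yields the convergence $\mathcal V^N\to\mathcal U$ you invoke. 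This is minor but worth making explicit.

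For the second assertion the paper's route is substantially simpler than yours. Once the truncation argument gives $\mathcal U=\mathcal U^{R_0}$, any $\epsilon$-optimal control rule for the \emph{restricted} problem $\mathcal U^{R_0}(t_0,m_0)$ is, by definition, bounded by $R_0$ and is also $\epsilon$-optimal for $\mathcal U(t_0,m_0)$---done. No compactness of controls, no passage to the limit in the $N$-particle feedbacks, no stability of control rules is needed. Your proposed route (extracting a subsequential limit of the $N$-particle optimal feedback controls via the compactness/stability machinery of \cite{DPT19,DPT20,LSZ20}) is feasible in principle and would yield the same conclusion, but it effectively re-derives a piece of the limit theory you are already citing, and you rightly flag it as the ``main obstacle''. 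The paper sidesteps it entirely.
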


\begin{proof}
Fix $R > 0$  and let $\mathcal{V}^{N,R}$ and $\mathcal{U}^R$ denote the values of the problems defining $\mathcal{V}^N$ and $\mathcal{U}$ when controls are restricted to the ball $B_R \subset \R^d$.
%\vs

More precisely, define
$\mathcal{A}^{N,R}$ to be the set of $\alpha = (\alpha^k)_{k = 1}^N$'s  such that $|\alpha^k| \leq R$ for each $R$, and $\mathcal{A}^{R}(t_0,m_0)$ to be the set of $(\Omega, \sF, \mathbb{F}, \bP,W, m,\alpha) \in \mathcal{A}(t_0,m_0)$ such that $|\alpha| \leq R$. Then define $\mathcal{V}^{N,R}$ exactly as in \eqref{def.VN}  but with $\mathcal{A}^{N,R}$ replacing $\mathcal{A}$ and define $\mathcal{U}^R$ exactly as in \eqref{def.ucommon} but with $\mathcal{A}^R(t_0,m_0)$ replacing $\mathcal{A}^R$.
%\vs

Then Proposition \ref{prop.equiv} and Theorem 3.6 of \cite{DPT20} give
%\begin{align*}
   $ \lim_{N \to \infty} \mathcal{V}^{N,R}(t,\bx^N) = \mathcal{U}^R(t, m)$
%\end{align*}
where $\bx^N = (x^1,...,x^N)$, $m \in \sP_r(\R^d)$ and $x^1,...,x^N \in \R^d$ are such that
\begin{align*}
    \sup_{N} \frac{1}{N} \sum_{i = 1}^N |x^i|^r < \infty \ \ \text{and} \ \ \frac{1}{N} \sum_{i = 1}^N \delta_{x^i} \underset{N\to \infty} \to m \in \sP_2(\R^d).
\end{align*}

%$ \lim_{N \to \infty} \mathcal{V}^{N,R}(t,\bx^N) = \mathcal{U}^R(t, m)$
%\end{align*}
%where $\bx^N = (x^1,...,x^N)$, $m \in \sP_4(\R^d)$ and $x^1,...,x^N \in \R^d$ are such that
%\begin{align*}
 %   \sup_{N} \frac{1}{N} \sum_{i = 1}^N |x^i|^4 < \infty \ \ \text{and} \ \ \frac{1}{N} \sum_{i = 1}^N \delta_{x^i} \underset{N\to \infty} \to m \in \sP_2(\R^d).
%\end{align*}

It follows from Lemma \ref{lem.estiVN} and Lemma \ref{lem.larger}, that there is $R_0>0$ such that $\mathcal{V}^{N,R_0} = \mathcal{V}^N$ and $\mathcal{U}^{R_0} = \mathcal{U}$, and so we infer that, for all $x_i$ and  $m$ as above,
%\begin{align*}
    $\lim_{N \to \infty} \mathcal{V}^{N}(t,\bx^N) = \mathcal{U}(t, m).$
%\end{align*}
Hence, the uniform regularity on $\mathcal{V}^N$ established in \eqref{It'^o}, which,  as noted above,  holds equally well when $a_0 > 0$,  is enough to conclude that, for some $C>0$,
%\begin{align*}
\newline
$|\mathcal{U}(s,m) - \mathcal{U}(t,m')| \leq C\big(d_1(m,m') + |t-s| \big) \quad \text{for all } m, m' \in \textcolor{blue}{\sP_r}(\R^d).$
%\end{align*}
%as required.
Finally, for any $\epsilon > 0$ and  $(t_0,m_0)$, we can choose an $\epsilon$-optimal pair $(m,\alpha)$ for $\mathcal{U}^{R_0}$, and that this control is also $\epsilon$-optimal for $\mathcal{U}$. This completes the proof.
\end{proof}

Let $\hat{\mathcal V}^N$ be defined in Lemma \ref{lem.ineqeasy}. Then it is easily checked that $\hat{\mathcal V}^N$ is smooth and satisfies, with $\hat{\mathcal F}^N$ and $\hat{\mathcal G}^N$ as in Lemma \ref{lem.ineqeasy},
$$
\left\{\begin{array}{l}
\ds -\partial_t  \hat{\mathcal V}^N(t,m)-(1+a^0)\int_{\R^d}{\rm div}_y( D_m \hat{\mathcal V}^N(t,m,y))m(dy) \\[1.5mm]
\ds  \qquad - a_0\int_{\R^{2d}} {\rm tr}(D^2_{mm}\hat{\mathcal V}^N(t,x,m,y,y'))m(dy)m(dy')\\[1.5mm]
\ds \qquad +\int_{\R^d} H(y, D_m\hat{\mathcal V}^N (t,m,y))m(dy)\leq \hat{\mathcal F}(m)\ \  {\rm in} \ \  (0,T)\times \Pk,\\
\ds \hat{\mathcal V}^N(T,m)= \hat{\mathcal G}(m)\ \ {\rm in}\ \  \Pk,
\end{array}\right.
$$
%where $\hat{\mathcal F}^N$ and $\hat{\mathcal G}^N$ are as in Lemma \ref{lem.ineqeasy}.
Then, as in the proof of Lemma \ref{lem.ineqeasy}, it is possible to  use It\^o's formula for conditional measures (see, for example, \cite{CaDeBook} Book 2, Chapter 4) to infer that, for any solution $(m,\alpha)$ to \eqref{eq.mtcn}, % one has
$$
\hat{\mathcal V}^N(t_0,m_0) \leq \E\left[\int_{t_0}^T (\int_{\R^d} L(x, \alpha_t(x))m_t(x)dx + \hat{\mathcal F}^N(m_t))dt + \hat{\mathcal G}^N(m_T)\right].
$$
Using the same argument as in the proof of Lemma \ref{lem.ineqeasy} with Lemma \ref{It'^ocommon} replacing Lemma \ref{lem.estid1}, we arrive  at
$$
{\mathcal V}^N(t_0,m^N_{\bx_0})\leq \mathcal U(t_0,m^N_{\bx_0})+C(1+M_2^{1/2}(m^N_{\bx_0})) N^{-\beta}.
$$
\vs
We now turn to the opposite inequality. As before, for
 $\theta,\lambda\in (0,1)$, let
\be\notag
\begin{split}
M:&= \max_{(t,{\bf x}), (s,{\bf y})\in [0,T]\times (\R^d)^N} \big(e^{ s} (\mathcal U(s,m^N_{{\bf y}}) -{\mathcal V}^N(t,{\bf x}))\\
& -\frac{1}{2\theta N} \sum_i |x^i-y^i|^2  -\frac{1}{2\theta }|s-t|^2 -\frac{\lambda}{2N} \sum_{i=1}^N |y^i|^2,
\end{split}
\ee
and  denote by $((t_0,\bx_0),(s_0, \by_0))$ a maximum point in the expression above.

As in \eqref{lem.estixkyk} we have
\begin{equation} 
\frac{1}{N} \sum_{i=1}^N |x_0^{i} - y_0^{i}|^2 + |t_0-s_0|^2 \leq C \theta^2\qquad \frac{1}{N} \sum_{i=1}^N |y^i_0|^2\leq C\lambda^{-1}.
\label{estixkyk2}
\end{equation}
%the proof  being the same as for Lemma  \ref{lem.estixkyk}.
%\newline \newline
\vs

Next, for $\delta>0$, we use the partition $(C^j)_{j\in \{1, \dots, J\}}$ of $\{1, \dots, N\}$ constructed in Lemma~\ref{takis40}.
%{\bf  Let $\delta>0$, $(C^j)_{j\in \{1, \dots, J\}}$ be a partition of $\{1, \dots, N\}$, $n^j:= \sharp C^j$ and $\bar \alpha^j\in \R^d$ for $j=1, \dots, J$. We assume that the $(C^j)$ and the $(\bar \alpha^j)$ are built in such a way that $\bar \alpha^j$ is almost optimal in the definition of $H(x^k_0, ND_{x^k}\mathcal V^N(\bf x_0))$ for $k\in C^j$, in the sense that
%\be\label{def.alphajCN}
%\left| H(x^k_0, ND_{x^k}\mathcal V^N(\bx_0)) + \bar \alpha^j \cdot (ND_{x^k}\mathcal V^N(\bx_0))+L(x^k_0 , \bar \alpha^j)\right| \leq C \delta \qquad \forall k\in C^j.
%\ee
%(in other words, $C^j$ is the set of $k\in \{1, \dots, N\}$ such that $|ND_{x^k}\mathcal V^N(\bx_0)-\bar p^j|\leq \delta$ for some $\bar p^j$ belonging to a grid of size $\delta$). This requires that $J= C\delta^{-d}$. }
\vs

We set $\alpha^k=\bar \alpha^j$ if $k\in C^j$, and let
%$$
\begin{align*}
X^k_{s_0+\tau}= x^k_0+\tau \alpha^k &+ \sqrt{2} B^k_\tau+ \sqrt{2a^0}B^0_\tau, \ \  Y^k_{s_0+\tau}= y^k_0+\tau \alpha^k + \sqrt{2}B^k_{\tau}+ \sqrt{2a^0}B^0_\tau,\\
& \text{and} \ \ m^j_{{\bf Y}_{s_0+\tau}}= \dfrac{1}{n^j}\sum_{k\in C^j} \delta_{Y^k_{s_0+\tau}},
%$$
\end{align*}
%Let also $\ds m^j_{{\bf Y}_{s_0+\tau}}:= (n^j)^{-1}\sum_{k\in C^j} \delta_{Y^k_{s_0+\tau}}$
and $m^j$ the solution to
$$
\left\{\begin{array}{l}
d m^j_t= \left[ (1+a^0) \Delta m^j_t -\bar\alpha^j \cdot Dm^j_t\right] + \sqrt{2a_0} Dm^j_t\cdot dB^0_t \ \ \ {\rm in}\ \  (s_0,T]\times \R^d\\[1.2mm] % \ \ \text{and} \ \   m^j_{s_0}= m^j_{\by_0} \ \ \text{in} \ \ \R^d.\\
\ds m^j_{s_0}= m^j_{\by_0}  \ \ \text{in} \ \ \R^d.
\end{array}\right.
$$
Finally, we set $m_s= N^{-1} \sum_{j\in J} n^jm^j_s$, and claim that, for all  $h\geq 0$ and $ j\in \{1, \dots, J\}$,
\be \label{commonconc}
\E\left[  {\bf d}_1( m^j_{s_0+h}, m^j_{{\bf Y}_{s_0+h}}) \right] \leq  C(1+M_2^{1/2}(m^j_{s_0})){h^{\beta}}/{(n^j)^\beta},
%\qquad \forall h\geq 0,\; \forall j\in \{1, \dots, J\},
\ee
%$$
%\E\left[  {\bf d}_1( m^j(s_0+h), m^j_{{\bf X}_{t_0+h}}) \right] \leq  \frac{1}{n^j} \sum_{k\in C^j}|x^k_0-y^k_0|+ C(1+M_2(m^j(s_0)))\frac{h^{\beta}}{(n^j)^\beta}\qquad \forall h\geq 0.
%$$
%As a consequence
%$$
%\E\left[ {\bf d}_1( m(s_0+h), m^N_{{\bf Y}_{s_0+h}}) \right] \leq  C(\delta^{-d}+\delta^{-d/2}\lambda^{-1/2})\frac{h^{\beta}}{N^\beta}
%$$
and
\be \label{commonconc2}
\E\left[ {\bf d}_1( m_{s_0+h}, m^N_{{\bf X}_{t_0+h}}) \right] \leq  C\theta+  C\delta^{-d\beta}(1+ \lambda^{-\frac{1}{2}})h^{\beta}/{N^{\beta}}. %\qquad \forall h\geq 0.
\ee
%The proof follows from Lemma \ref{lem.estid1}. Indeed one just need to notice that, if one looks at $\tilde m^j_{s_0+\tau}= (Id\pm \sqrt{2a^0}B^0_\tau)\sharp m^j_{s_0+\tau}$ and $\tilde Y^k_{s_0+\tau}= Y^k_{s_0+\tau}-\sqrt{2a^0}B^0_\tau$, then $\tilde m^j$ and $\tilde Y^k$ solve the same equation as in Lemma \ref{lem.estid1} and thus the result applies.
%\vs
The proof follows from Lemma \ref{lem.estid1} and estimate \eqref{estixkyk2}. Indeed, to establish \eqref{commonconc}, we first note that the process $(m_t)_{t\in [s_0,T]}$ solves \eqref{eq.mtcn} in the sense of distribution (with $B^0$ replacing $W$) if and only if the process $\tilde m_t=(Id-\sqrt{2a^0}(B_t^0-B_{t_0}))\sharp m_t$ solves $\P-$a.s. in the (classical) sense of distributions, with $\tilde \alpha_t(x)= \alpha_t(x+\sqrt{2a^0}(B_t^0-B_{t_0}^0)$,  the equation
\be\label{eq.mtcnBIS}
d\tilde m_t(x)= \left[\Delta \tilde m_t(x)-{\rm div}(\tilde m_t \tilde \alpha_t(x))\right]dt  \ \ \text{in} \ \ (t_0,T] \times \R^d \ \ \ \tilde m_{t_0}=m_0 \ \ \text{in} \ \ \R^d,
\ee
%\vskip.05in
%where $\tilde \alpha_t(x)= \alpha_t(x+\sqrt{2a^0}(B_t^0-B_{t_0}^0))$.
Next, we consider $$\tilde m^j_{t_0+\tau}= (Id  -  \sqrt{2a^0}B^0_\tau)\sharp m^j_{t_0+\tau} \ \ \text{and} \ \ \tilde{\bf{Y}}^k_{t_0+\tau}= {\bf Y}^k_{t_0+\tau}-\sqrt{2a^0}B^0_\tau,$$
and notice that $\tilde m^j$ and $\tilde{\bf{Y}}^k$ solve the same equations as in Lemma \ref{lem.estid1}, and, hence,  \eqref{commonconc} holds with $\tilde{m}^j_{t_0 + h}$ replacing $m^j_{t_0 + h}$ and $m^j_{\tilde{\bf{Y}}_{t_0 + h}}$ replacing $m^j_{\bf{Y}_{t_0 + h}}$.
\vs
Since $$m_{t_0 + r}^j = \tilde{m}_{t_0 + \tau}^j * \delta_{\sqrt{2 \alpha^0} B_{\tau}},$$ and
$$
    m^j_{\bf{Y}_{t_0 + \tau}}  = \dfrac{1}{n^j} \sum_{k \in C^j} \delta_{\tilde{\bf{Y}}_{t_0 + \tau}^k + \sqrt{2a^o} B^0_{\tau}} =  \Big(\dfrac{1}{n^j} \sum_{k \in C^j} \delta_{\tilde{\bf{Y}}_{s_0 + \tau}^k }\Big) * \delta_{\sqrt{2a^o} B^0_{\tau}} = \tilde{m}^j_{\bf{Y}_{s_0 + \tau}} * \delta_{\sqrt{2a^o} B^0_{\tau}},
$$
we can conclude that
\be\notag
\begin{split}
\E\left[  {\bf d}_1( m^j_{s_0+h}, m^j_{{\bf Y}_{s_0+h}}) \right]& = \E\left[  {\bf d}_1( m^j_{s_0+h} * \delta_{\sqrt{2\alpha^0} B^0_h}, \,  m^j_{{\bf Y}_{s_0+h}} * \delta_{\sqrt{2\alpha^0} B^0_h}) \right] \\
&= \E\left[  {\bf d}_1( \tilde{m}^j_{s_0+h}, m^j_{{\tilde{\bf{Y}}}_{s_0+h}}) \right],
\end{split}
\ee
and so \eqref{commonconc} holds. The proof for \eqref{commonconc2} is similar.
%\vs

We proceed with the proof noticing that the dynamic programming principle in Lemma \ref{lem.PDD} still holds but with an expectation, since now the measures are random, and with Proposition \ref{prop.dpcommon} replacing Proposition \ref{pro.dppnocommon}.

%\vs
Moreover, since the conclusion of Lemma \ref{lem.semiconcTD} also holds as already pointed out, we can argue as in the proof of Proposition \ref{prop.hardineq} (the time-regularity provided by Lemma \ref{It'^ocommon} replacing that in Lemma \ref{It'^o}) that
$
\mathcal U(t,m^N_{{\bf x}}) -{\mathcal V}^N(t,{\bf x})) \leq C N^{-\beta}(1+ \frac{1}{N} \sum_{i=1}^N |x^i|^2).
$
The conclusion then follows as in the proof of Theorem \ref{thm.main1}.
\end{proof}

\section{Appendix}
%The purpose of this appendix is to 
We adapt some technical results from \cite{DPT19} and \cite{DPT20} to our setting. Most importantly, we infer the dynamic programming principle (Proposition \ref{prop.dpcommon}) in our setting from the dynamic programming principle which is stated in Theorem 3.1 of \cite{DPT20}. Most of the arguments are straightforward adaptations of the superposition and mimicking results achieved in \cite{LSZ20}, and so the proofs are only sketched.
%\vs
%For the convenience of the reader, we recall briefly  here the notion of superposition from \cite{LSZ20} which we use  to make our formulation consistent with the one in \cite{DPT19, DPT20}.
%\vs
%In our context, a superposition principle is a result asserting the following: given a control rule $\mathcal{R} = (\Omega, \sF, \mathbb{F}, \bP, W, m, \alpha) \in \mathcal{A} \in \mathcal{R}(t_0,m_0)$, there is an  extension $(\tilde{\Omega}, \tilde{\sF}, \mathbb{G})$ of $(\Omega, \sF, \mathbb{F})$ hosting another Brownian motion $B$ independent of $\mathbb{F}$  and a process $X$ satisfying  $dX_t= \alpha_t(X_t) dt +\sqrt{2}dB_t + \sqrt{2a_0}dW_t$ such that $m_t = \sL(X_t | \sF_t)$. We refer to \cite{LSZ20} for details. The superposition results of \cite{LSZ20} are useful to us because we need to apply some technical results from \cite{DPT19, DPT20}, and the superposition results allow us to check that our formulation is equivalent to the one used in \cite{DPT19,DPT20}.
%

%\vs
Following Definition 2.1 in \cite{DPT19} and Definition 2.3 \cite{DPT20}  we define, for each $(t_0,m_0) \in [0,T] \times \sP_2(\R^d)$, the set of weak controls $\mathcal{A}_w(t_0,m_0)$ to be the set of tuples
%\begin{align*}
   $ \mathcal{R} = (\Omega, \sF, \bP, \mathbb{F} = (\sF_t)_{0 \leq t \leq T}, \mathbb{G} = (\sG)_{0 \leq t \leq T}, X, B, W, m, \alpha)$
%\end{align*}
such that
\begin{enumerate}
    \item $(\Omega, \sF, \bP)$ is a probability space equipped with filtrations $\mathbb{G}$, $\mathbb{F}$ such that, for all  $0 \leq t \leq T$, $\sG_t \subset \sF_t$ and  $\sF_t \vee \sF_T^B \perp \sG_T | \sG_t.$
       % \item
       %and $\sF_t \vee \sF_T^B \perp \sG_T | \sG_t. % \text{ for all } 0 \leq t \leq T.$
    %\end{itemize}
    \item $X = (X_t)_{0 \leq t \leq T}$ is a continuous, $\mathbb{F}$-adapted $\R^d$ valued process.
    %taking values in $\R^d$.
    \item $\alpha = (\alpha_t)_{t_0 \leq t \leq T}$ is a bounded, $\mathbb{F}$-predictable process taking values in $\R^d$.
    \item $(B,W)$ is a $\R^d \times \R^d$-valued standard $\mathbb{F}$ Brownian motion, $W$ is $\mathbb{G}$-adapted, and
    $\sF_t \vee \sigma(B) \perp \sG_T$.
    \item $m = (m_t)_{t_0 \leq t \leq T}$ is a $\mathbb{G}$-predictable process taking values in $\sP_2(\R^d)$ and such that $m_t = \sL(X_t | \sG_t)$ for $d\bP \otimes ds$-a.e. $(s,\omega) \in [t,T] \times \Omega$.
    \item For all $t_0 \leq t \leq T$% the state equation
    \begin{align*}
        X_t = X_{t_0} + \int_{t_0}^t \alpha_s ds + \sqrt{2} (B_t - B_{t_0}) + \sqrt{2a_0} (W_t - W_{t_0}), \quad \sL(X_{t_0}) = m_0. 
    \end{align*}
    %holds for all $t_0 \leq t \leq T$.
\end{enumerate}
We also let
\begin{align*}
    \mathcal{U}_w(t_0,m_0) := \inf_{\mathcal{R} \in \mathcal{A}_w(t_0,m_0)} \E^{\bP}[\int_{t_0}^T (L(X_t, \alpha_t) + \sF(m_t) ) dt + \sG(m_T)]
\end{align*}

In our context, a superposition principle is a result asserting the following: given a control rule $\mathcal{R} = (\Omega, \sF, \mathbb{F}, \bP, W, m, \alpha) \in \mathcal{A} \in \mathcal{R}(t_0,m_0)$, we can find an extension $(\tilde{\Omega}, \tilde{\sF}, \mathbb{G})$ of $(\Omega, \sF, \mathbb{F})$ hosting another Brownian motion $B$ independent of $\mathbb{F}$  and a process $X$ such that $dX_t= \alpha_t(X_t) dt +\sqrt{2}dB_t + \sqrt{2a_0}dW_t$ such that $m_t = \sL(X_t | \sF_t)$. We refer to \cite{LSZ20} for details. The superposition results of \cite{LSZ20} are useful to us because we need to apply some technical results from \cite{DPT19, DPT20}, and the superposition allows us to check that our formulation is equivalent to the one used in \cite{DPT19,DPT20}.
\vs
In what follows, for technical reasons, that is, to have  the coercivity condition on the cost appearing in Assumption 2.1 of \cite{DPT20}, we will work with a truncated version of the weak formulation defined here. Namely, we define $\mathcal{A}^R_w(t_0,m_0)$ just as $\mathcal{A}_w(t_0,m_0)$, but with the controls $\alpha$ required to take values in $B_R \subset \R^d$. Then, we write
\begin{align*}
    \mathcal{U}^R_w(t_0,m_0) := \inf_{\mathcal{R} \in \mathcal{A}^R_w(t_0,m_0)} \E^{\bP}[\int_{t_0}^T (L(X_t, \alpha_t) + \sF(m_t) ) dt + \sG(m_T)]
\end{align*}
We also truncate the original form of the problem, by defining $\mathcal{U}^R$ just like $\mathcal{U}$, but with controls $\alpha$ required to take values in $B_R \subset \R^d$.
%\vs

The following can be obtained using the superposition and following results of \cite{LSZ20}, as in the proof of Theorem 8.3 of \cite{LSZ20}.

\begin{prop} \label{prop.equiv}
For each $R$,  $\mathcal{U}^R_w = \mathcal{U}^R$.
\end{prop}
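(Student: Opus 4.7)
The plan is to prove both inequalities $\mathcal{U}_w^R\leq \mathcal{U}^R$ and $\mathcal{U}^R\leq \mathcal{U}_w^R$ by adapting the superposition/mimicking arguments of \cite{LSZ20} to our setting. The truncation to $\overline{B_R}$ plays a purely technical role: it makes the coercivity hypothesis of Assumption 2.1 of \cite{DPT20} trivial (controls live in a fixed compact set) and automatically gives the integrability conditions of \cite{LSZ20}, so that the template of the proof of Theorem 8.3 of \cite{LSZ20} applies with minor modifications.

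For the direction $\mathcal{U}_w^R\leq \mathcal{U}^R$, I will start from a closed-loop rule $\mathcal{R}=(\Omega,\sF,\mathbb{F},\bP,W,m,\alpha)\in \sA^R(t_0,m_0)$ and invoke the conditional superposition principle of Section~5 of \cite{LSZ20}, applied to the SPDE \eqref{eq.mtcn}. This yields an enlargement of the underlying probability space supporting an independent idiosyncratic Brownian motion $B$ and an independent initial value $\xi\sim m_0$, together with a continuous process $X$ solving
\[
X_t=\xi+\int_{t_0}^t \alpha_s(X_s)\,ds+\sqrt{2}(B_t-B_{t_0})+\sqrt{2a_0}(W_t-W_{t_0}),
\]
and satisfying $\sL(X_t\mid \sG_t)=m_t$, where $\mathbb{G}=(\sG_t)$ is the augmented filtration of $W$. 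Setting $\widetilde{\alpha}_t:=\alpha_t(X_t)$ and $\mathbb{F}$ the augmented filtration of $(X,B,W)$ produces a weak rule in $\sA_w^R(t_0,m_0)$; its cost equals that of $\mathcal{R}$ by Fubini together with the identity $\E[L(X_t,\alpha_t(X_t))\mid \sG_t]=\int_{\R^d} L(x,\alpha_t(x))\,m_t(dx)$.

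For the converse, given a weak rule $(\Omega,\sF,\bP,\mathbb{F},\mathbb{G},X,B,W,m,\alpha)\in \sA_w^R(t_0,m_0)$, I will construct a closed-loop control by conditioning. Using a regular disintegration of $\sL(X_t\mid \sG_t)=m_t$, define
\[
\widetilde{\alpha}_t(x):=\E\bigl[\alpha_t\,\big|\,X_t=x,\,\sG_t\bigr],
\]
which can be chosen jointly measurable in $(t,x,\omega)$ and $\mathbb{G}$-predictable in $(t,\omega)$, and satisfies $|\widetilde{\alpha}_t(x)|\leq R$ by the tower property. Applying It\^o's formula to $\phi(X_t)$ for test functions $\phi\in C^\infty_c(\R^d)$ and using the tower property, one verifies that $m$ still solves \eqref{eq.mtcn} with drift $\widetilde{\alpha}$, whence $(\Omega,\sF,\mathbb{G},\bP,W,m,\widetilde{\alpha})\in \sA^R(t_0,m_0)$. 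Convexity of $L$ in its second variable (immediate from the Legendre duality with $H$) and the conditional Jensen inequality give
\[
\int_{\R^d} L(x,\widetilde{\alpha}_t(x))\,m_t(dx)=\E\bigl[L(X_t,\widetilde{\alpha}_t(X_t))\,\big|\,\sG_t\bigr]\leq \E\bigl[L(X_t,\alpha_t)\,\big|\,\sG_t\bigr],
\]
while the terms $\sF(m_t)$ and $\sG(m_T)$ depend only on $m$ and are preserved. Integrating in time and taking the infimum over weak rules yields $\mathcal{U}^R\leq \mathcal{U}_w^R$.

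The main obstacle is the measurability bookkeeping in the mimicking step: one must show that $\widetilde{\alpha}$ can be chosen so that $(t,x,\omega)\mapsto \widetilde{\alpha}_t(\omega)(x)$ is jointly measurable with the appropriate $\mathbb{G}$-progressive measurability in $(t,\omega)$, and that the resulting $m$ still satisfies the SPDE with the projected drift. Both points are standard consequences of the disintegration and conditional It\^o calculus developed in Sections 5--6 of \cite{LSZ20}, which I will simply quote rather than redo the measure-theoretic arguments in detail.
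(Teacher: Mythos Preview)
Your proposal is correct and follows essentially the same approach as the paper: the paper's proof of Proposition~\ref{prop.equiv} is a one-line reference to the superposition and mimicking results of \cite{LSZ20}, specifically the template of Theorem~8.3 there, and you have simply spelled out that template (superposition for $\mathcal{U}_w^R\leq \mathcal{U}^R$, mimicking plus Jensen for $\mathcal{U}^R\leq \mathcal{U}_w^R$). Your identification of the measurability bookkeeping as the only delicate point, and your decision to quote \cite{LSZ20} for it, matches exactly what the paper does.
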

It is also useful to note that the  regularity results of  Lemma \ref{lem.estiVN}, which holds also in the case $a_0 > 0$,  can be used to infer that $\mathcal{U}^{R} = \mathcal{U}$ for all $R \geq R_0$.

\begin{lem} \label{lem.larger}
There exists $R_0$ depending on the data such that, for each $R \geq R_0$, $\mathcal{U}^R = \mathcal{U}$.
\end{lem}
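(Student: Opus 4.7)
The plan is to choose $R_0$ as the uniform-in-$N$ bound on the optimal feedback of $\mathcal V^N$ provided by Lemma \ref{lem.estiVN}, which, as observed earlier in the section, remains valid when $a_0>0$, and then to transfer the identity $\mathcal V^{N,R}=\mathcal V^N$ (valid for $R\geq R_0$) to the mean-field limit via the convergence result of \cite{DPT20}. Concretely, the verification theorem associated with the HJB equation \eqref{eq.HJBN} shows that the feedback $\hat\alpha^k(t,\bx)=-D_pH(x^k,ND_{x^k}\mathcal V^N(t,\bx))$ is optimal in \eqref{def.VN}, and Lemma \ref{lem.estiVN} gives an $\linf$ bound $R_0$ on this feedback that depends only on the data. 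Hence $\mathcal V^N=\mathcal V^{N,R}$ for every $R\geq R_0$ and every $N$.

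Fixing $R\geq R_0$, I would next take an arbitrary $m_0\in\sP_r(\R^d)$ with $r>2$ and choose $\bx^N=(x^{1,N},\ldots,x^{N,N})\in(\R^d)^N$ with $\sup_N N^{-1}\sum_{i=1}^N|x^{i,N}|^r<\infty$ and $m^N_{\bx^N}\to m_0$ in $\sP_2(\R^d)$. Combining Proposition \ref{prop.equiv} with Theorem 3.6 of \cite{DPT20} yields $\mathcal V^{N,R}(t_0,\bx^N)\to \mathcal U^R(t_0,m_0)$; applied separately to $R$ and $R_0$, and exploiting that the two prelimit sequences coincide, this forces $\mathcal U^R(t_0,m_0)=\mathcal U^{R_0}(t_0,m_0)$ for every $m_0\in\bigcup_{r>2}\sP_r(\R^d)$.

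To promote this equality to all of $\sP_2(\R^d)$, I would prove $\mathbf d_1$-continuity of $\mathcal U^R$ in $m_0$ for each fixed $R$. Working with the open-loop weak formulation $\mathcal A^R_w$ from the start of the appendix, where $|\alpha_t|\leq R$, the drift is bounded and $L(\cdot,\alpha_t)$ is uniformly Lipschitz with constant depending only on $R$; a standard coupling of two initial laws against the same control and the same Brownian pair yields $|\mathcal U^R_w(t_0,m_0)-\mathcal U^R_w(t_0,m_0')|\leq C_R\mathbf d_1(m_0,m_0')$, and Proposition \ref{prop.equiv} transfers this to $\mathcal U^R$. Approximating an arbitrary $m_0\in\sP_2(\R^d)$ by compactly supported measures (which lie in every $\sP_r$) then gives $\mathcal U^R=\mathcal U^{R_0}$ on $[0,T]\times\sP_2(\R^d)$ for every $R\geq R_0$. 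Finally, $R\mapsto\mathcal U^R$ is nonincreasing and $\mathcal U=\inf_R\mathcal U^R$, so sending $R\to\infty$ yields $\mathcal U=\mathcal U^{R_0}$, as required.

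The main obstacle is the continuity step for $\mathcal U^R$. Although the truncation $|\alpha|\leq R$ makes the SDE stability estimate elementary, care is needed because $L(\cdot,\alpha_t(\cdot))$ is only Lipschitz with a constant depending on $R$ (rather than globally bounded), which is harmless here since $R$ is fixed at this stage but forbids any uniform-in-$R$ continuity estimate; this is precisely why the argument must be carried out level by level in $R$ and only afterwards combined with the trivial monotone limit to recover $\mathcal U$.
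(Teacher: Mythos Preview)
Your proposal is correct and follows essentially the same line as the paper: choose $R_0$ from the uniform feedback bound of Lemma~\ref{lem.estiVN}, so that $\mathcal V^{N,R}=\mathcal V^N$ for $R\geq R_0$, then pass to the limit via Proposition~\ref{prop.equiv} and Theorem~3.6 of \cite{DPT20} to get $\mathcal U^R=\mathcal U^{R_0}$ for all $R\geq R_0$, and finally identify this common value with $\mathcal U$.

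Two minor differences are worth noting. First, for the identification $\mathcal U=\mathcal U^{R_0}$ the paper picks an $\epsilon$-optimal control rule for $\mathcal U(t_0,m_0)$, observes that by the very definition of $\mathcal A(t_0,m_0)$ the control $\alpha$ is uniformly bounded and hence belongs to some $\mathcal A^R(t_0,m_0)$ with $R\geq R_0$, and concludes $\mathcal U^{R_0}=\mathcal U^R\leq\mathcal U+\epsilon$; your statement that $\mathcal U=\inf_R\mathcal U^R$ is exactly the same observation phrased globally. Second, the convergence result from \cite{DPT20} is stated for $m\in\sP_r(\R^d)$ with $r>2$, and you insert an explicit $\mathbf d_1$-continuity step (via a coupling in the weak formulation) to upgrade $\mathcal U^R=\mathcal U^{R_0}$ from $\sP_r$ to all of $\sP_2$; the paper does not spell this out. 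Your addition is a clean way to close that gap and costs nothing conceptually.
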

\begin{proof}
Theorem 3.1 and Theorem 3.6 in \cite{DPT20} together with Proposition \ref{prop.equiv} yield that, 

%allow  to conclude that, for each $R > 0$, we have the following form of convergence of $\mathcal{V}^{N,R}$ to $\mathcal{U}^R$.
%\vs
%for all  $t \in [0,T]$, $\mu \in \sP_4(\R^d)$ and $x_i \in \R^d$ such that
%$$\sup_{N} \dfrac{1}{N} \sum_{i =1}^N |x_i|^4 < \infty \ \ \text{and} \ \dfrac{1}{N} \sum_{i = 1}^N \delta_{x_i} \underset{N\to \infty} \to m \ \ \text{ in} \ \ \ \sP_2(\R^d),$$

%allow  to conclude that, for each $R > 0$, we have the following form of convergence of $\mathcal{V}^{N,R}$ to $\mathcal{U}^R$.
%\vs
for all  $t \in [0,T]$, all $r>2$, all $m \in \sP_r(\R^d)$ and $x_i \in \R^d$ such that, for some $\epsilon >0$,
$$\sup_{N} \dfrac{1}{N} \sum_{i =1}^N |x_i|^{r} < \infty \ \ \text{and} \ \dfrac{1}{N} \sum_{i = 1}^N \delta_{x_i} \underset{N\to \infty} \to m \ \ \text{ in} \ \ \ \sP_2(\R^d),$$

%\begin{itemize}
 %   \item $\sup_{N} \frac{1}{N} \sum_{i =1}^N |x_i|^4 < \infty$, and
 %   \item $\frac{1}{N} \sum_{i = 1}^N \delta_{x_i} \to m$ in $\sP_2(\R^d)$,
%\end{itemize}
we have, for  $\bx^N = (x_1,...,x_N) \in (\R^d)^N$,
\begin{align} \label{pdtconv}
    \lim_{N \to \infty} \mathcal{V}^{R,N}(t,\bx^N) = \mathcal{U}^R(t, m).
\end{align}
%where $\bx^N = (x_1,...,x_N) \in (\R^d)^N$.

Next, notice that, by \eqref{lem.estiVN} (see Remark \ref{rmk.bddcontrol}), there is  $R_0$ depending only on the data such that, for all $R \geq R_0$, $\mathcal{V}^{N,R} = \mathcal{V}^{N}$. Thus \eqref{pdtconv} actually gives, for all $R \geq R_0$,
%\begin{align*}
   $ \lim_{N \to \infty} \mathcal{V}^{N}(t,\bx^N) = \mathcal{U}^R(t, m).$
%\end{align*}
%for all $R \geq R_0$.
It follows that $\mathcal{U} = \mathcal{U}^{R_0}.$
Indeed, clearly $\mathcal{U} \leq \mathcal{U}^{R_0}$.

%\vs
For the other inequality, for any $(t_0,m_0)$, we can choose $\mathcal{R} = (\Omega, \sF, \mathbb{F}, \bP, W, m, \alpha)$ to be $\epsilon$-optimal in the definition of $\mathcal{U}(t_0,m_0)$. Since $\alpha$ is bounded by hypothesis, there exist $R \geq R_0$ such that $\mathcal{R} \in \mathcal{A}^{R}(t_0,m_0)$, and, hence,
%\begin{align*}
   $ \mathcal{U}^{R_0}(t_0,m_0) = \mathcal{U}^R(t_0,m_0) \leq \mathcal{U}(t_0,m_0) + \epsilon.$
%\end{align*}
Letting  $\epsilon \to 0$ gives $\mathcal{U}(t_0,m_0) = \mathcal{U}^{R_0}(t_0,m_0)$.
\end{proof}
Now, we turn to the dynamic programming principle, that is, Proposition \ref{prop.dpcommon}.
\begin{proof}[Proof of Proposition \ref{prop.dpcommon}]
We combine Theorem 3.1 of \cite{DPT19} with Proposition \ref{prop.equiv} to conclude that, for all $0 \leq t_0 \leq t_1 \leq T$ and any $R \geq R_0$,
\begin{align*}
    \mathcal{U}(t_0,m_0) &= \mathcal{U}^{R}(t_0,m_0) \\
    &= \mathcal{U}^{R}_w(t_0,m_0) = \inf_{\mathcal{R} \in \mathcal{A}^{R}_w(t_0,m_0)} \E^{\bP}[ \int_{t_0}^{t_1} (L(X_t, \alpha_t) + \sF(m_t)) dt + \mathcal{U}^{R}_W(t_1, m_{t_1})] \\
   & =  \inf_{\mathcal{R} \in \mathcal{A}^{R}_w(t_0,m_0)} \E^{\bP}[ \int_{t_0}^{t_1} (L(X_t, \alpha_t) + \sF(m_t)) dt + \mathcal{U}^{R}(t_1, m_{t_1})] \\
 &   = \inf_{\mathcal{R} \in \mathcal{A}^{R}_w(t_0,m_0)} \E^{\bP}[ \int_{t_0}^{t_1} (L(X_t, \alpha_t) + \sF(m_t)) dt + \mathcal{U}(t_1, m_{t_1})].
\end{align*}
Since $R$ can be arbitrarily large, it is easy to see that the above imply
%\begin{align*}
\[    \mathcal{U}(t_0,m_0) = \inf_{\mathcal{R} \in \mathcal{A}_w(t_0,m_0)} \E^{\bP}[ \int_{t_0}^{t_1} (L(X_t, \alpha_t) + \sF(m_t)) dt + \mathcal{U}(t_1, m_{t_1})],\]
%\end{align*}
and,  using the superposition and adapting arguments from \cite{LSZ20}, Proposition~\ref{prop.dpcommon} follows. 
\end{proof}

\bibliographystyle{siam}

\end{document}